\numberwithin{equation}{section}       
\newtheorem{theorem}{Theorem}[section]        
\newtheorem{lemma}[theorem]{Lemma}       
\newtheorem{obs}[theorem]{Observation}       
\newtheorem{observation}[theorem]{Observation}       
\newtheorem{problem}[theorem]{Problem}       
\newtheorem{prop}[theorem]{Proposition}       
\newtheorem{corollary}[theorem]{Corollary}       
\newtheorem{conj}[theorem]{Conjecture}       
\newtheorem*{theorem*}{Theorem}
\newtheorem*{corollary*}{Corollary}
\newtheorem*{example*}{Example}
\newcommand{\cf}{\operatorname{cf}}
\theoremstyle{definition}       
\newtheorem{definition}[theorem]{Definition}       
\newtheorem*{remark*}{Remark}
\newtheorem{remark}[theorem]{Remark} 
\newcommand{\mc}[1]{\mathcal{#1}}       
\newcommand{\mbb}[1]{\mathbb{#1}}       
\newcommand{\mb}[1]{\mathbf{#1}}       
\newcommand{\setm}{\setminus}       
\newcommand{\empt}{\emptyset}       
\newcommand{\subs}{\subset}       
\newcommand{\dom}{\operatorname{dom}}       
\newcommand{\ran}{\operatorname{ran}}       
\def\<{\left\langle}       
\def\>{\right\rangle}
\newcommand{\gamexyb}[3]{G_{#3}(#2,#1)}
\newcommand{\gamexb}[2]{G_{#2}(#1)}
\newcommand{\wingamexyb}[4]{#1\uparrow{G_{#4}(#3,#2)}}
\newcommand{\wingamexb}[3]{#1\uparrow{G_{#3}(#2)}}
\newcommand{\bocox}[3]{\operatorname{\mathbf B}_{#3}(#1,#2)}
\newcommand{\boco}[2]{\operatorname{\mathbf B}(#1,#2)}
\newcommand{\boou}[3]{\operatorname{B}_{#3}(#1,#2)}
\newcommand{\ps}{\operatorname{ps}}
\newcommand{\we}{\operatorname{w}}
\newcommand{{\llambda}}{\operatorname{sm}}
\newcommand{\sm}{\operatorname{sm}}
\newcommand{\psn}{\operatorname{\psi w_0}}
\newcommand{\pso}{\operatorname{\psi w_1}}
\newcommand{\pst}{\operatorname{\psi w_2}}
\newcommand{\psii}{\operatorname{\psi w_i}}
\newcommand{\oset}{\operatorname{o}}
\newcommand{\mbbI}{[0,1]}
\title[Gamification of the 
$T_0$-pseudoweight]{Gamification of the 
$T_0$-pseudoweight via cut-and-choose games on topological spaces}
\author[L. Chiozini]{Lucas Chiozini}
\address{Universit\`a degli Studi di Palermo}
\email{lucas.chiozini.souza@gmail.com}
\author[T. Csern\'ak]{Tam\'as Csern\'ak}
\address{University of Pannonia}
\email{tamas@csernak.hu}
\author[L. Soukup]{Lajos Soukup}
\address{Hun-Ren Alfréd Rényi Institute of Mathematics}
\email{lsoukup@gmail.com}
\keywords{pseudoweight, cut-and-choose game, winning strategy, point separating game, set membership game}
\subjclass[2020]{54A25, 91A44, 54D10}
\begin{document}       

\date{\today}

\begin{abstract}
        We study transfinite cut-and-choose games on $T_0$
         spaces, introducing the {\em point-separating number} 
         $\ps(X)$ and the {\em set membership number} ${\sm}(X)$ as ordinal-valued 
         invariants measuring the minimal length of a game in which a Seeker can 
         determine a hidden point or subset. A central motivating question is which 
         countable ordinals can occur as the value of $\ps(X)$, in particular whether
          any countable ordinal  can arise. These invariants are inspired by Scott's 
          $T_0$-pseudoweight $\psn$. 
          Fundamental inequalities are established relating $\ps(X)$, ${\sm}(X)$, $\psn(X)$, 
          and $|X|$, including the sharp bounds $|X|\le 2^{\ps(X)}$ and 
          $\psn(X)\le 2^{<\ps(X)}$. The values of these invariants are computed for familiar spaces such as Cantor cubes, 
          powers of the Alexandroff double arrow space, and certain stationary subsets of cardinals. Finally, we further investigate their behavior under 
          topological sums and products, revealing the striking contrast between 
          $\ps$ and ${\sm}$. 

          For metric spaces, we determine that $\ps(X)=\log|X|$.
          However, we do not know such computation for ${\sm}(X)$;
          we can only assert that ${\sm}(X)$ may be arbitrarily large.
          
        Finally, we highlight another 
           open problem:  whether these games are always determined. 
\end{abstract}
\maketitle    

\section{Introduction}

We consider the following transfinite cut-and-choose game played on a $T_0$ topological space $X$ by two players, the \emph{Seeker} and the \emph{Hider}.  
At the beginning of the game, the Hider secretly selects a point $p \in X$.  
At the $\alpha$-th move, the Seeker chooses an open set $U_\alpha \subseteq X$, and the Hider responds by indicating whether  $p \in U_\alpha$.  
The game terminates once the Seeker can uniquely determine the point $p$.  

The \emph{point-separating number} of the space $X$ 
denoted by $\ps(X)$ is the least ordinal $\beta$ such that the Seeker has a strategy guaranteeing termination of the game in at most $\beta$ steps, regardless of the choice of $p$.  

It is clear that $\ps(\mathbb{R})=\ps(\mathbb{Q})=\omega$. More generally, $\ps(X)=\omega$ 
for every infinite metric space $X$ with $|X|\leq 2^{\omega}$.  
However, the value of $\ps(X)$ is not necessarily a cardinal: as we will see in Theorem~\ref{tm:double-arrow-power},  
\begin{displaymath}
\ps(\mathbb{A}^{n}) = \omega \oplus {n}
\end{displaymath}
for each   $1\le {n} \le {\omega}$,  
where $\mathbb{A}$ denotes the Alexandroff double arrow space, and $\oplus$ denotes ordinal addition.

Since we did not know of any space $X$ with $\omega \oplus \omega < \ps(X) < \omega_1$,  
we initiated a systematic investigation of this game and its variants,  
with the goal of understanding their behavior, identifying spaces for which the value of $\ps$ can be determined in advance,  
and computing $\ps$ for classical spaces such as the Suslin line, Cantor cubes, and ${\omega}^*$.

\subsection*{Historical background.}
In \cite{Scott86} Scott introduced the  notion of {\em $T_i$-pseudoweight $\psii(X)$}  of a 
$T_i$ topological space $X$  for $i\le2$ as follows:
\begin{displaymath}\label{df:psi_i}
    \psii(X)=\min\{\we(\rho):\text{$\rho\subset \tau_X$ is a $T_i$ topology on $X$}\}.
\end{displaymath}
For infinite $X$,  $\psii(X)$ is clearly the minimum of the cardinalities of $T_i$-separating families of open sets, 
and we have  
\begin{equation}\notag
\text{$\psn(X)\le \pso(X)=\operatorname{\psi w}(X)\le \pst(X)\le \we(X)$,
}
\end{equation}
and, by \cite{Scott86}*{Fact 0.3 (c)},
\begin{equation}\notag
|X|\le 2^{\psn(X)}.
\end{equation}

\subsection*{Formal definition of the games.}

Next, we introduce two transfinite cut-and-choose games that formalize and extend the Seeker–Hider game described above.  
The first of these is not new—it provides a precise 
game-theoretic formulation of the informal game introduced earlier—while both are naturally connected to the notion of $T_0$-pseudoweight.

If $X$ is a topological space, $U\subs X$ and  $i\in 2$, let  
\begin{displaymath}
\boou{U}{i}{X}=\left\{\begin{array}{ll}
{U}&\text{if $i=1$,}\\
{X\setm U}&\text{if $i=0$.}
\end{array}\right..
\end{displaymath}
If  $\mathbf U\in {}^{\beta}\mc P(X)$ and $\mathbf i\in {}^{\beta}2$  are functions for some ordinal ${\beta}$, let
\begin{displaymath}
    \bocox{\mathbf U}{\mathbf i}{X}=\bigcap_{{\alpha}<{\beta}}
    \boou{\mathbf U({\alpha})}{\mathbf i({\alpha})}{X}.
\end{displaymath}
We write $\boco{\mathbf U}{\mathbf i}$ for $\bocox{\mathbf U}{\mathbf i}{X}$ if $X$ is clear from the context. 

\begin{definition}\label{df:game_YX}
    Assume that $X$ is a topological space, $Y\subs X$, and ${\beta}$ is an ordinal.
    We define the {\em set membership} game  $\gamexyb{X}{Y}{{\beta}}$ and  
    the {\em point separating} game $\gamexb X{\beta}$ as follows. 

    The gameplay is the same for both games; the difference lies in how the winner  
    is determined.
    There are two players:  the {\em Seeker} and the {\em Hider}. 
    The gameplay  consists of ${\beta}$-many moves.

In the $\alpha$-th move of the game,  
\begin{enumerate}[(i)]
    \item the Seeker chooses an open set $\mathbf{U}(\alpha) \in \tau_X$, and  
    \item the Hider then   declares  
\begin{displaymath}
    \mathbf{i}(\alpha) =
    \begin{cases}
        1 & \text{if the Hider chooses } \mathbf{V}(\alpha) = \mathbf{U}(\alpha), \\
        0 & \text{if the Hider chooses } \mathbf{V}(\alpha) = X \setminus \mathbf{U}(\alpha).
    \end{cases}
\end{displaymath}    
\end{enumerate}
The \emph{state} of the game after $\alpha^*$ moves is the set  
\begin{displaymath}
\mathbf{W}(\alpha^*) = \bigcap_{\alpha < \alpha^*} \mathbf{V}(\alpha) 
= \bocox{\mathbf{U}\restriction \alpha^*}{\mathbf{i}\restriction \alpha^*}{X}.
\end{displaymath}
The \emph{outcome} of the game is defined by  
\begin{displaymath}
\mathbf{W} = \mathbf{W}(\beta).
\end{displaymath}
In the game $\gamexyb{X}{Y}{{\beta}}$, the Seeker wins 
iff 
\begin{displaymath}
\text{$\mb W\subs Y$ or $\mb W\cap Y=\empt.$}
\end{displaymath}    
In the game $\gamexb{X}{{\beta}}$, the Seeker wins iff    
\begin{displaymath}
\text{$|\mb W|\le 1$.}
\end{displaymath}

We write $\wingamexyb{S}{X}{Y}{{\beta}}$ \ (respectively, $\wingamexyb{H}{X}{Y}{{\beta}}$)  iff 
the Seeker (respectively, the Hider) has a winning strategy in the game 
$\gamexyb{X}Y{{\beta}}$.

Likewise, 
 $\wingamexb{S}{X}{{\beta}}$ \ (respectively, $\wingamexb{H}{X}{{\beta}}$)  means that  
the Seeker (respectively, the Hider) has a winning strategy in the game $\gamexb{X}{{\beta}}$.

Define the {\em set membership number} ${\sm}(Y,X)$ of a subspace $Y$ in a $T_0$ space  $X$ and the 
{\em set membership number} ${\sm}(X)$ of a  $T_0$ space  $X$ 
as follows: 
\begin{align*}
{\sm}(Y,X)=&\min\{{\beta}:\wingamexyb{S}{X}{Y}{{\beta}}\},\\
{\sm}(X)=&\sup\{{\sm}(Y,X):Y\subs X\}.
\end{align*}
Define  the {\em point separation number} $\ps(X)$ of a $T_0$ space $X$ as follows:  
\begin{displaymath}
    \ps(X)=\min\{{\beta}:\wingamexb{S}{X}{{\beta}}\}.
    \end{displaymath}
\end{definition}
It is straightforward to verify that this definition of $\ps(X)$ coincides with the one given at the beginning of the introduction.

Clearly, ${\sm}(Y,X)$, ${\sm}(X)$ and $\ps(X)$  are defined provided 
$X$ is $T_0$, and 
\begin{equation}\notag
\text{${\sm}(Y,X)\le {\sm}(X)\le ps(X)\le \psn(X)
.$}
\end{equation}

\subsection*{Basic inequalities.}
In Section \ref{sc:ineq} we  investigate the relationships between 
$\ps(X)$, ${\sm}(X)$, $\psn(X)$ and $|X|$. 
The simplest question is whether the value of $\ps(X)$ bounds the value of $\psn(X)$, 
and whether the value of ${\sm}(X)$ bounds $\ps(X)$.

By \cite{Scott86}*{Fact 0.3 (c)}, we have 
$
|X|\le 2^{\psn(X)}.$   
Since $\ps(X)\le \psn(X)$, in Theorem \ref{tm:ps2psn}(a) we obtain 
the following stronger result:

\begin{theorem*}
    If $X$ is a $T_0$-space, then $|X|\le 2^{\ps(X)}$.
\end{theorem*}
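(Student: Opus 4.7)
The plan is to extract an injection $X \hookrightarrow {}^{\ps(X)}2$ from any winning strategy of the Seeker, which immediately yields $|X|\le 2^{\ps(X)}$. Set ${\beta}=\ps(X)$ and fix a Seeker strategy ${\sigma}$ witnessing $\wingamexb{S}{X}{{\beta}}$ in the game $\gamexb{X}{{\beta}}$.

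For each $p\in X$, consider the unique play $(\mathbf U_p,\mathbf i_p)\in {}^{{\beta}}\tau_X\times {}^{{\beta}}2$ in which the Seeker follows ${\sigma}$ and the Hider answers \emph{honestly with respect to $p$}, that is,
\begin{displaymath}
\mathbf i_p({\alpha})=
\begin{cases}
1 & \text{if } p\in \mathbf U_p({\alpha}),\\
0 & \text{if } p\notin \mathbf U_p({\alpha}),
\end{cases}
\end{displaymath}
for every ${\alpha}<{\beta}$. Since the Seeker's moves depend only on the previous Hider answers via ${\sigma}$, the sequence $\mathbf U_p$ is determined by $\mathbf i_p$. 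By construction $p\in \boou{\mathbf U_p({\alpha})}{\mathbf i_p({\alpha})}{X}$ for every ${\alpha}<{\beta}$, so $p\in \mathbf W_p$, where $\mathbf W_p=\bocox{\mathbf U_p}{\mathbf i_p}{X}$ is the outcome of this play. Key step: the map $\Phi\colon X\to {}^{{\beta}}2$ defined by $\Phi(p)=\mathbf i_p$ is injective. Indeed, if $\Phi(p)=\Phi(q)$, then $\mathbf U_p=\mathbf U_q$ and $\mathbf W_p=\mathbf W_q$, so both $p$ and $q$ belong to this common outcome; but ${\sigma}$ is a winning strategy, so $|\mathbf W_p|\le 1$, forcing $p=q$.

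Therefore $|X|\le |{}^{{\beta}}2|=2^{\ps(X)}$, as required. There is essentially no obstacle here beyond the careful bookkeeping of the play $(\mathbf U_p,\mathbf i_p)$: the only conceptual point is that the Seeker's strategy ${\sigma}$ does not depend on the hidden point, so two points inducing the same Hider transcript give rise to literally the same play and hence land in the same final state.
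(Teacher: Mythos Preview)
Your proof is correct and follows essentially the same approach as the paper: fix a winning strategy for the Seeker, let each point $p$ induce the ``honest'' Hider transcript $\mathbf i_p$, observe that $p$ lies in the resulting outcome $\mathbf W_p$ (which has at most one element), and conclude that $p\mapsto \mathbf i_p$ is injective into ${}^{\ps(X)}2$. The paper phrases the last step as ``the sequence $\mathbf i$ uniquely determines $x$'' rather than explicitly naming the injection, but the argument is identical.
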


It is important to emphasize that, unlike $\psii(X)$,   the values  $\ps(X)$ 
and ${\sm}(X)$  
are not necessarily cardinals. Hence, to formulate the next result, we need 
to introduce the following notation: if ${\beta}$ is an ordinal, let 
$$2^{<{\beta}}
=\sup \{|{}^{{\alpha}}2|:{\alpha}<{\beta}\}.$$
So $2^{<{\omega}}={\omega}$, but $2^{<{\omega}\oplus 1}=2^{\omega}$.
In Theorem \ref{tm:ps2psn}(b) we obtain a  bound for $\psn(X)$. 

    \begin{theorem*}
        If $X$ is a $T_0$-space, then $\psn(X) \leq 2^{<\ps(X)}$. 
        \end{theorem*}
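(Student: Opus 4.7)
The plan is to extract a $T_0$-separating family of open sets directly from a Seeker winning strategy in the point-separating game. Let $\beta=\ps(X)$ and fix a Seeker winning strategy $\sigma\colon {}^{<\beta}2\to\tau_X$ for $\gamexb{X}{\beta}$, where $\sigma(s)$ denotes the open set the Seeker plays after the Hider has produced the response history $s$. Put $\mathcal F=\{\sigma(s):s\in{}^{<\beta}2\}$; I will show that $\mathcal F$ is $T_0$-separating and that $|\mathcal F|\le 2^{<\beta}$.

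First, to each $p\in X$ attach the \emph{truthful trace} $\mathbf i_p\in{}^\beta 2$, defined by transfinite recursion via $\mathbf i_p(\alpha)=1$ iff $p\in\sigma(\mathbf i_p\restriction\alpha)$. Writing $\mathbf U_p(\alpha)=\sigma(\mathbf i_p\restriction\alpha)$, by construction $p\in\boou{\mathbf U_p(\alpha)}{\mathbf i_p(\alpha)}{X}$ for every $\alpha<\beta$, and hence $p\in\boco{\mathbf U_p}{\mathbf i_p}$; the winning property of $\sigma$ then forces this state to have at most one element, so it equals $\{p\}$.

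For the separation step, given distinct $p,q\in X$, the traces $\mathbf i_p$ and $\mathbf i_q$ cannot coincide (else the common final state would contain both), so let $\alpha$ be their first disagreement. Since the two traces agree strictly below $\alpha$, one has $\sigma(\mathbf i_p\restriction\alpha)=\sigma(\mathbf i_q\restriction\alpha)=:U\in\mathcal F$, and the disagreement at coordinate $\alpha$ forces exactly one of $p,q$ to lie in $U$. Hence $U$ $T_0$-separates the pair, and we obtain $\psn(X)\le|\mathcal F|$.

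For the cardinality bound one writes $|\mathcal F|\le|{}^{<\beta}2|=\sum_{\alpha<\beta}2^{|\alpha|}$; for infinite $\beta$ this simplifies to $|\beta|\cdot 2^{<\beta}=2^{<\beta}$ once one verifies the auxiliary inequality $|\beta|\le 2^{<\beta}$, which is a short cardinal-arithmetic exercise using K\"onig's inequality $\mu^+\le 2^\mu$ when $|\beta|$ is of successor type and the cofinal spread of smaller cardinalities when $|\beta|$ is a limit cardinal. The main obstacle is really only this bookkeeping: the tree-strategy construction and the separation argument are routine, whereas the finite-$\beta$ case requires a separate treatment exploiting closure of $\tau_X$ under finite intersections (the generated topology already contains enough open sets to reduce $|\mathcal F|$ to $2^{\beta-1}=2^{<\beta}$).
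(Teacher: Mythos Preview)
Your argument is correct and matches the paper's: both show that $\ran(\sigma)$ is a $T_0$-separating family of open sets of size at most $2^{<\ps(X)}$. Where the paper argues by contradiction (letting the Hider keep a fixed pair $\{a,b\}$ inside every $\boou{\mathbf U(\alpha)}{\mathbf i(\alpha)}{X}$), you use truthful traces and a first-disagreement argument, and you spell out the identity $|{}^{<\beta}2|=2^{<\beta}$ that the paper takes for granted; your parenthetical on the finite-$\beta$ case is muddled, but that edge case is not the substance of the theorem.
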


        By Theorem      \ref{tm:ps=kappa} the 
 space $X=\<D(2)^{\kappa}, {\tau}_{<{\kappa}}\>$ serves in the following example,  where ${\tau}_{<{\kappa}}$ denotes the topology generated by the 
 $G_{<{\kappa}}$-sets.  It shows that  
the inequality $\psn(X) \leq 2^{<\ps(X)}$ is sharp: 
\begin{example*}
For each regular cardinal ${\kappa}$ there is a 0-dimensional $T_2$ 
space $X$ with $\ps(X)={\kappa}$ and $\psn(X)=2^{<{\kappa}}$.
\end{example*}

    Under GCH, $2^{<{\kappa}}={\kappa}$ for each infinite cardinal ${\kappa}$,
    so in this case    
    $\ps(X) = {\kappa}$ implies $ \psn(X)=\ps(X)$.   
    However, in Theorem \ref{tm:double-arrow-power} we construct  ZFC examples when  $\ps(X)$ is an ordinal, 
    and it is strictly smaller than 
    $\psn(X)$:
    
    \begin{example*}
    $\ps(\mathbb A) =  {\sm}(\mbb A)= \omega \oplus 1 < 2^{\omega} = \psn(\mathbb A)$, 
    where $\mathbb A$ denotes the Alexandroff double arrow space. 
    \end{example*}

  In Section \ref{sc:ineq}, we establish  a connection between the values of $\ps(X)$ and ${\sm}(X)$.
To state this result, we introduce the  following terminology. 

Let $\alpha$ be an ordinal and $\kappa$ a cardinal. 
We say that $\alpha$ is {\em $\kappa$-decomposable} if $\alpha$ can be partitioned into $\kappa$ subsets, each of which has order type $\alpha$.
Observe that $\alpha$ is  $\kappa$-decomposable if and only if
${\beta}\oplus {\kappa}\le {\alpha}$ for each ${\beta}<{\alpha}$
if and only if ${\alpha}={\kappa}\otimes {\gamma}$ for some ordinal ${\gamma}$.

In Theorem \ref{tm:pssmlog} we prove the following.

  \begin{theorem*}
     $\ps(X)=\max({\sm}(X),\log(|X|))$ provided $\sm(X)< \log(|X|)$ or   
    ${\sm}(X)$ is  $\log(|X|)$-decomposable.
    \end{theorem*}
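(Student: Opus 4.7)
The plan is to prove both inequalities. For the \emph{lower bound} $\ps(X) \ge \max(\sm(X), \log|X|)$, the inequality $\sm(X) \le \ps(X)$ is recorded in the excerpt, and the preceding theorem $|X| \le 2^{\ps(X)}$ gives $\log|X| \le \ps(X)$ by definition of $\log$.

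For the \emph{upper bound}, I would set $\lambda = \log|X|$, fix an injection $f\colon X \to {}^{\lambda}2$, and define $Y_\alpha = \{x \in X : f(x)(\alpha) = 1\}$ for $\alpha < \lambda$. By definition of $\sm(X)$, the Seeker has a winning strategy $\sigma_\alpha$ in $\gamexyb{X}{Y_\alpha}{\sm(X)}$ for each $\alpha < \lambda$. The key move is to interleave these $\lambda$ strategies inside one point-separating game of ordinal length $\beta = \max(\sm(X), \lambda)$: partition $\beta$ into $\lambda$ pieces $(P_\alpha)_{\alpha<\lambda}$, each of order type $\sm(X)$, and at move position $\delta \in P_\alpha$ have the Seeker play the open set prescribed by $\sigma_\alpha$ for its $\gamma$-th move, where $\gamma$ is the rank of $\delta$ in $P_\alpha$; this instruction uses only the Hider's responses at positions in $P_\alpha \cap \delta$, all of which are $< \delta$.

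The required partition exists in both cases. Under the second hypothesis, that $\sm(X)$ is $\lambda$-decomposable, the very definition supplies a partition of $\beta = \sm(X)$ into $\lambda$ subsets each of order type $\sm(X)$. Under the first hypothesis $\sm(X) < \lambda$ with $\lambda$ an infinite cardinal, the ordinal identity $\sm(X) \cdot \lambda = \lambda$—valid because $|\sm(X)| \cdot |\alpha| < \lambda$ for every $\alpha < \lambda$—lets us take $\beta = \lambda$ together with the consecutive blocks $P_\alpha = [\sm(X) \cdot \alpha,\; \sm(X) \cdot (\alpha+1))$, each of order type $\sm(X)$.

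Finally, I would verify that this combined strategy wins. After all $\beta$ moves, each $\sigma_\alpha$ has completed its $\sm(X)$ prescribed moves, so the sub-state $\mathbf{W}_\alpha = \bigcap_{\delta \in P_\alpha} \boou{\mathbf{U}(\delta)}{\mathbf{i}(\delta)}{X}$ satisfies $\mathbf{W}_\alpha \subseteq Y_\alpha$ or $\mathbf{W}_\alpha \cap Y_\alpha = \emptyset$. The full state $\mathbf{W} = \bigcap_{\alpha<\lambda} \mathbf{W}_\alpha$ then has the property that any two $x, y \in \mathbf{W}$ lie on the same side of every $Y_\alpha$, so $f(x) = f(y)$ and thus $x = y$. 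Hence $|\mathbf{W}| \le 1$ and the Seeker wins, yielding $\ps(X) \le \beta = \max(\sm(X), \log|X|)$. The main obstacle is securing the partition; once it is in hand, the combinatorics of the interleaving and the final injectivity argument are routine.
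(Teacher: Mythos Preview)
Your proof is correct and follows essentially the same strategy as the paper: obtain a point-separating family $\{Y_\alpha : \alpha < \log|X|\}$ (you do this via an injection into ${}^{\lambda}2$, the paper phrases it directly as a separating family), partition $\beta=\max(\sm(X),\log|X|)$ into $\log|X|$ blocks, and interleave the $\log|X|$ set-membership games. The only cosmetic difference is that the paper unifies both hypotheses into the single condition ``$\delta=\max(\sm(X),\log|X|)$ is $\log|X|$-decomposable'' (which holds automatically when $\sm(X)<\log|X|$, since then $\delta=\log|X|$ is a cardinal) and takes blocks of order type $\delta$ rather than $\sm(X)$; your case split and use of shorter blocks works just as well.
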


    In particular, if $|X|\le 2^{\omega}$, then  $\ps(X)=\max({\sm}(X),\log(|X|))$
    provided ${\sm}(X)$ is a limit ordinal.
    Without any extra assumption the equality of the previous 
    theorem does not hold: 
 by  Theorem \ref{tm:double-arrow-power},  
 \begin{displaymath}
 \text{
 $\ps(\mbb A^2)={\omega}\oplus 2$ and 
 $\max({\sm}(\mbb A^2),\log(|
 \mbb A^2|))={\omega}\oplus 1$.
 }
\end{displaymath}
    The set membership number of a discrete space is $1$,  
    and thus the value of ${\sm}(X)$ places no restriction on $|X|$.  
    Moreover, ${\sm}(X)$ alone does not bound the size of $X$ even for crowded (i.e. dense-in-itself) spaces,  
    nor does it do so when $\Delta(X)$ is large.  
    Indeed, by Corollary \ref{cr:subm} we have the following examples.

\begin{example*}  
For each infinite cardinal $\kappa$, there exists a $0$-dimensional $T_2$ space $X$ such that  
\begin{displaymath}
|X| = \Delta(X) = \kappa, \quad\ps(X)=\log({\kappa}) \quad \text{and} \quad {\sm}(X) = 2.
\end{displaymath}  
\end{example*}

However, if $\oset(X)<2^{|X|}$, then the following statement 
(see Theorem \ref{tm:Xosm}) provides a lower bound for 
$\sm(X)$.
\begin{theorem*}
If $X$ is a $T_0$ space, then 
\begin{displaymath}
\oset(X)^{2^{\sm(X)}}\ge 2^{|X|}.
\end{displaymath}
\end{theorem*}

In particular, under GCH, we obtain the following (see 
Corollary \ref{cr:smlarge}):
\begin{corollary*}[GCH]
    If $X$ is an infinite  $T_0$ space with $|X|=2^{\we(X)}$,
    then $\sm(X)=\we(X)$. 
\end{corollary*}

\subsection*{On the range of $\ps$ and ${\sm}$.}

The starting point of our investigation was the
 following question: determine the 
class of the possible values of $\ps$ and ${\sm}$.
In particular, we are interested in identifying those countable ordinals that can occur 
as the value of $\ps$.  
At present, we have only partial results.  
In particular,  
Theorem \ref{cor:addition} yields the following: 

\begin{theorem*}
For each ordinal $ {\alpha}\le{\omega}^2$, there exists a
0-dimensional $T_2$ space $X$ with $\ps(X)={\alpha}$.   
\end{theorem*}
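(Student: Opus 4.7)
The plan is to construct, for each $\alpha \le \omega^2$, an explicit 0-dimensional $T_2$ space $X_\alpha$ with $\ps(X_\alpha) = \alpha$, handling the small cases directly and reducing the remainder to the ordinal-addition principle of Theorem~\ref{cor:addition}.

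For $\alpha < \omega$, let $X_\alpha$ be the discrete space of cardinality $2^\alpha$. The inequality $|X| \le 2^{\ps(X)}$ established above gives $\ps(X_\alpha) \ge \alpha$, while the Seeker's binary-search strategy --- at each move query a half-set of the current state --- gives $\ps(X_\alpha) \le \alpha$. For $\alpha = \omega$, take $X_\omega = \mathbb{Q}$ (or any countable $T_2$, $0$-dimensional, metric space); the introduction already observes that $\ps = \omega$ here. For $\omega \le \alpha \le \omega \oplus \omega = \omega \cdot 2$, write $\alpha = \omega \oplus n$ with $1 \le n \le \omega$ and set $X_\alpha := \mathbb{A}^n$; Theorem~\ref{tm:double-arrow-power} then delivers $\ps(X_\alpha) = \alpha$.

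For $\omega \cdot 2 < \alpha < \omega^2$, invoke the Cantor normal form $\alpha = \omega \cdot k + n$ with $2 \le k < \omega$ and $n < \omega$. Applying Theorem~\ref{cor:addition} iteratively, I would glue together $k$ copies of $X_\omega$ with a single copy of the discrete space $X_n$ of size $2^n$, obtaining a 0-dimensional $T_2$ space whose $\ps$-value is $\omega \cdot k + n = \alpha$. For $\alpha = \omega^2$, the same addition principle is iterated $\omega$-many times, realizing $\omega^2 = \sup_k \omega \cdot k$ by an $\omega$-indexed analogue of the addition operation (for instance, via a suitable one-point compactification or coherent limit of the spaces $X_{\omega \cdot k}$).

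The main obstacle is hidden in Theorem~\ref{cor:addition}. Its upper bound $\ps(X \bullet Y) \le \ps(X) + \ps(Y)$ is a straightforward sequential-play argument: the Seeker first pins down the $X$-coordinate, then the $Y$-coordinate. The delicate direction is the matching lower bound, where the Hider must prevent the Seeker from parallelizing the two factors --- otherwise the sum would collapse to a maximum. This forces $\bullet$ to be engineered so that every open query is effectively confined to a single summand, while still preserving $0$-dimensionality and $T_2$. The limit case $\alpha = \omega^2$ adds the further difficulty of making the construction behave coherently across all $\omega$-many stages at once.
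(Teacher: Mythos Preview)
Your treatment of the range $\alpha \le \omega \cdot 2$ is sound: discrete spaces, $\mathbb Q$, and the powers $\mathbb A^{n}$ (which are indeed $0$-dimensional $T_2$) cover these values correctly via results already established in the paper. The gap is everything beyond $\omega\cdot 2$.

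For $\omega\cdot 2 < \alpha \le \omega^2$ your argument is circular and then purely speculative. You invoke ``Theorem~\ref{cor:addition}'' as if it were an ordinal-addition principle $\ps(X\bullet Y)=\ps(X)+\ps(Y)$ for some binary operation $\bullet$ on spaces, but Corollary~\ref{cor:addition} \emph{is} the existence statement you are asked to prove; it is not a tool you may cite. Your final paragraph confirms this: you openly describe the construction of $\bullet$ and its lower bound as ``the main obstacle,'' and then only list desiderata that $\bullet$ would have to satisfy, without exhibiting any space or any Hider strategy. Nothing in the proposal produces a space with $\ps=\omega\cdot 3$, let alone $\omega^2$.

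What the paper actually uses is Theorem~\ref{tm:steppingup}, which you never mention: from any $T_2$ space $X$ with $\pi(X)=\omega$ one builds an explicit $0$-dimensional $T_2$ space $Z$ (underlying set $C\times X$, $C$ the Cantor set, equipped with a twisted neighbourhood system) satisfying $\ps(Z)=\omega\oplus\ps(X)$ and again $\pi(Z)=\omega$. Starting from $D(2^m)$ and iterating $n$ times yields $\ps=\omega\cdot n + m$. The value $\omega^2$ is then obtained not by any limit or compactification device, but simply from the topological-sum formula (Theorem~\ref{tm:pssumgen}): $\ps\bigl(\sum_{n<\omega}X_{n,0}\bigr)=\sup_n(\omega\cdot n)=\omega^2$. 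Your instinct that the Hider must be able to block parallelization is exactly right, and the twisted topology on $C\times X$ is precisely the mechanism that enforces it; but that construction is the entire content of the proof, and it is absent from your proposal.
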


\begin{restatable}{problem}{alphaleomegaone}\label{pr:alphaleomega1}
    Does there exist a topological space $X$ with ${\omega}^2<\ps(X)<{\omega}_1$? 
\end{restatable}

\subsection*{$\ps$ and ${\sm}$ for classical spaces}
In Section \ref{sc:familiar} we compute $\ps$ and ${\sm}$ for some 
well-studied spaces. 
In Theorem \ref{tm:ps=kappa} we compute $\ps$ and ${\sm}$ for the Cantor cubes. 
\begin{theorem*}
$\ps(D(2)^{\kappa})=
{\sm}(D(2)^{\kappa})={\kappa}$ 
for each infinite cardinal ${\kappa}$.
\end{theorem*}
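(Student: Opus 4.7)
The plan is to sandwich both invariants via $\kappa \le \sm(X) \le \ps(X) \le \kappa$ for $X = D(2)^{\kappa}$; the middle inequality is the general bound already noted in the excerpt, so only the outer two are at stake.

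The upper bound $\ps(X) \le \kappa$ is witnessed by the obvious Seeker strategy: at move $\alpha < \kappa$, query the subbasic open set $U_{\alpha} = \{x : x(\alpha) = 1\}$. Then $\mathbf V(\alpha) = \{x : x(\alpha) = \mathbf i(\alpha)\}$, so $\mathbf W$ reduces to the singleton $\{\mathbf i\}$ and the Seeker wins.

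For the lower bound $\sm(X) \ge \kappa$, I would take $Y = \{x \in X : |x^{-1}(1)| = \kappa\}$ and show that for each $\beta < \kappa$ the Hider wins $\gamexyb{X}{Y}{\beta}$. The key density fact is: every cylinder $[\sigma] = \{x : x \supseteq \sigma\}$ with $\dom(\sigma) \subsetneq \kappa$ and $|\dom(\sigma)| < \kappa$ meets both $Y$ (extend $\sigma$ by constant $1$ on the free coordinates) and $X \setminus Y$ (extend by constant $0$). Thus it suffices to have the Hider produce a state $\mathbf W$ containing such a cylinder.

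The Hider maintains the invariant that there is a partial function $\sigma_\alpha$ with $|\dom(\sigma_\alpha)| < \kappa$ and $[\sigma_\alpha] \subseteq \mathbf W(\alpha)$. At a successor move with Seeker's query $U$: if $[\sigma_\alpha] \cap U = \empt$ answer $0$, if $[\sigma_\alpha] \subseteq U$ answer $1$ (in both cases keep $\sigma_{\alpha+1} = \sigma_\alpha$); otherwise answer $1$ and extend $\sigma_\alpha$ by finitely many coordinates to a $\sigma_{\alpha+1} \supseteq \sigma_\alpha$ with $[\sigma_{\alpha+1}] \subseteq U$ --- possible because $U$ is a union of finite-support basic opens and some such basic open meets $[\sigma_\alpha]$. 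At a limit $\lambda$ set $\sigma_\lambda = \bigcup_{\alpha < \lambda}\sigma_\alpha$; compatibility is automatic, and $[\sigma_\lambda] \subseteq \mathbf W(\lambda)$ since $[\sigma_\lambda] \subseteq [\sigma_{\alpha+1}] \subseteq \mathbf V(\alpha)$ for every $\alpha < \lambda$. The only transfinite point, and the main obstacle, is keeping $|\dom(\sigma_\alpha)| < \kappa$ throughout: since each successor adds at most $\aleph_0$ coordinates, transfinite induction yields $|\dom(\sigma_\alpha)| \le |\alpha| \cdot \aleph_0 < \kappa$ for every $\alpha < \kappa$, using only that $\kappa$ is an infinite cardinal. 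Consequently $[\sigma_\beta]$ has a nonempty set of free coordinates, meets both $Y$ and $X \setminus Y$ by the density fact, and the Hider wins.
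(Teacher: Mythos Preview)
Your proof is correct and follows the same core strategy as the paper's: the Hider maintains a shrinking cylinder inside the game state and wins because the final cylinder meets both halves of a dense/codense partition. The difference is in the ambient topology used for the lower bound. The paper passes to the finer topology $\tau_{<\kappa}$ and proves $\sm(\langle D(2)^\kappa,\tau_{<\kappa}\rangle)\ge\kappa$ there, deducing the product-topology case via $\sm(D(2)^\kappa)\ge\sm(\langle D(2)^\kappa,\tau_{<\kappa}\rangle)$; this needs regularity of $\kappa$ so that an intersection of $<\kappa$ many $\tau_{<\kappa}$-basic sets still has $<\kappa$ support at limit stages, and indeed the paper's Theorem~4.1 is stated only for regular~$\kappa$. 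You instead stay in the product topology, so each successor step adds only \emph{finitely} many coordinates, and the support control $|\dom(\sigma_\alpha)|<\kappa$ goes through for every infinite cardinal $\kappa$; your argument thus handles the singular case directly, matching the statement as announced in the introduction. One cosmetic slip: the bound $|\dom(\sigma_\alpha)|\le|\alpha|\cdot\aleph_0$ is false when $\kappa=\omega$ and $\alpha$ is a positive integer; the intended (and correct) bound is that $|\dom(\sigma_\alpha)|$ is finite for finite $\alpha$ and at most $|\alpha|$ for infinite $\alpha<\kappa$.
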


In Theorem \ref{tm:double-arrow-power} we compute the $\ps$ and $\sm$
for powers of the Alexandroff double arrow space $\mbb A$.
In particular, we obtain the following results:
\begin{theorem*}
$\ps(\mbb A^n)={\omega}\oplus n$
and ${\sm}(\mbb A^n)={\omega}\oplus 1$ for $n<{\omega}$,
and $\ps(\mbb A^{\mu})={\sm}(\mbb A^{\mu})={\omega}\oplus {\mu}$
for any infinite cardinal ${\mu}$.
\end{theorem*}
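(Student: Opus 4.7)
For $\ps(\mbb A^n)\le \omega\oplus n$ with $n<\omega$ the Seeker interleaves a binary search on each of the $n$ coordinates during the first $\omega$ moves, so that $\mb W(\omega)$ sits inside the finite set $F=\prod_{k<n}\{x_k^-,x_k^+\}$ of size $\le 2^n$; at move $\omega\oplus k$ ($k<n$) he queries the clopen cylinder $\{p:p_k\in[x_k^+,1^-]\}$ to fix the sign of coordinate $k$. For $\sm(\mbb A^n)\le\omega\oplus 1$, since $\mbb A^n$ is Hausdorff and $F$ is finite, one can find an open $U$ with $U\cap F=Y\cap F$, and this single query at move $\omega$ finishes the game. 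For $\mu=\omega$ the same template gives $\omega$ binary-search moves followed by $\omega$ sign queries, totaling $\omega\cdot 2=\omega\oplus\omega$. For uncountable $\mu$ process coordinates sequentially in $\omega\oplus 1$ moves each; ordinal arithmetic yields $(\omega\oplus 1)\cdot\mu=\mu=\omega\oplus\mu$. The inequality $\sm\le\ps$ takes care of the remaining upper estimate.

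\textbf{Lower bounds via a rectangle strategy.} Against any Seeker strategy of length $<\omega\oplus n=\omega+n$ the Hider plays as follows. During moves $m<\omega$ he maintains a nested sequence $R_m=\prod_{k<n}I_k^{(m)}$ of rectangles of non-empty open subintervals of $[0,1]$ (with $\bar I_k^{(m+1)}\subset I_k^{(m)}$) such that every one of the $2^n$ sign variants of every real tuple in $R_m$ lies in $\mb W(m)$. At move $m$ with Seeker query $U_m$: if the whole sign skeleton of $R_m$ sits inside $U_m$ (resp.\ inside $X\setm U_m$), answer $1$ (resp.\ $0$) with no shrinking; otherwise pick any sign variant $q\in R_m\cap U_m$, a basic cylinder $V_0\times\cdots\times V_{n-1}\subseteq U_m$ containing $q$, and shrink each $I_k^{(m)}$ to a proper subinterval $I_k^{(m+1)}$ such that the two-point block $\{z^-,z^+\}$ sits in $V_k$ for every $z\in I_k^{(m+1)}$ -- possible because every basic open of $\mbb A$ contains complete two-point blocks over a whole subinterval. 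After $\omega$ moves, any tuple $(x_0,\ldots,x_{n-1})\in\bigcap_m\bar R_m$ (non-empty by nested compactness) has all $2^n$ sign variants in $\mb W(\omega)$; each of the remaining $<n$ Seeker moves halves at best, so by pigeonhole at least $2^n/2^{n-1}=2$ variants survive and $|\mb W|>1$. The bound $\sm(\mbb A^n)\ge\omega\oplus 1$ is obtained by applying the same construction to $Y=\{p:p_0\text{ has sign }+\}$, which separates the two sign variants of coordinate $0$.

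\textbf{Infinite case.} For $\mu=\omega$ the same rectangle argument preserves all $2^\omega$ sign variants of the limit tuple through move $\omega$; any finite number of subsequent moves halves at best and still leaves $2^\omega$ variants alive, so $\ps(\mbb A^\omega)>\omega+n$ for every $n<\omega$, yielding $\ge\omega\oplus\omega$. The analogous argument with $Y=\{p:p_\alpha\text{ has sign }+\text{ for every }\alpha\}$ gives $\sm(\mbb A^\omega)\ge\omega\oplus\omega$, using that every open neighbourhood in $F\cong 2^\omega$ of the all-$+$ point has cardinality $2^\omega$ and so contains uncountably many non-$Y$ variants. For uncountable $\mu$, the estimate $\ps(\mbb A^\mu)\ge\mu=\omega\oplus\mu$ follows from $|\mbb A^\mu|=2^\mu$ and Theorem~\ref{tm:ps2psn}(a). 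For $\sm(\mbb A^\mu)$ with uncountable $\mu$, run the rectangle strategy on all $\mu$ coordinates with the same all-$+$ target: every Seeker move picks a basic cylinder with finite support, so at most finitely many $I_\alpha$ need be shrunk per move, and after any game of length $<\mu$ strictly fewer than $\mu$ coordinates have been touched, leaving an untouched coordinate whose sign is still free and producing both a $Y$-survivor and a non-$Y$-survivor.

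\textbf{Main obstacle.} The essential delicacy is the shrinking step in dimension $n\ge 2$: since $\mbb A^n$ is not hereditarily Lindel\"of, a single open set can distinguish signs at uncountably many real tuples, so any naive non-adaptive ``choose a generic tuple avoiding a countable set of sign-boundaries'' argument is bound to fail. The adaptive rectangle strategy succeeds by exploiting the one-dimensional fact that every basic open of $\mbb A$ contains complete $\{z^-,z^+\}$ blocks over a whole subinterval, which lets the Hider always localise inside an ambiguous Seeker query without ever splitting the sign skeleton.
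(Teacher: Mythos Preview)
Your overall approach---the ``rectangle strategy'' based on the observation that every nonempty open subset of $\mbb A^{\mu}$ contains a full block $V\times 2^{\mu}$ for some nonempty open $V\subseteq[0,1]^{\mu}$---is exactly the paper's approach (its Observation~\ref{obs:1}), and your arguments for the finite powers and for the uncountable powers match the paper's Lemmas~\ref{lm:s-n}--\ref{lm:kalaom} up to cosmetic differences.

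There are, however, two genuine gaps.

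First, your appeal to Theorem~\ref{tm:ps2psn}(a) for $\ps(\mbb A^{\mu})\ge\mu$ with uncountable $\mu$ only yields $\ps(\mbb A^{\mu})\ge\log(2^{\mu})$, and $\log(2^{\mu})<\mu$ is consistent (for instance if $2^{\aleph_0}=2^{\aleph_1}$ then $\log(2^{\aleph_1})=\aleph_0$). This particular gap is harmless in the end, because your subsequent Hider argument for $\sm(\mbb A^{\mu})\ge\mu$ (which is correct) already gives $\ps\ge\sm\ge\mu$; the paper avoids the issue by giving the Hider strategy directly for both invariants.

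Second, and more seriously, your witness $Y=\{p:\text{every }p_{\alpha}\text{ has sign }+\}$ does \emph{not} establish $\sm(\mbb A^{\omega})\ge\omega\oplus\omega$. Here is a Seeker strategy of length $\omega+1$ that defeats it: in the first $\omega$ moves play $B_k\times 2^{\omega}$ for a countable base $\{B_k:k<\omega\}$ of $[0,1]^{\omega}$, so that $\mb W(\omega)=\{x\}\times 2^{\omega}$ for a single $x$ computable from the Hider's answers; at move $\omega$ play the open set $\mbb A^{\omega}\setminus\{(x,\mathbf 1)\}$. Whichever way the Hider answers, $\mb W(\omega+1)$ is either $\{(x,\mathbf 1)\}\subseteq Y$ or $\{x\}\times(2^{\omega}\setminus\{\mathbf 1\})$, which is disjoint from $Y$. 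Your reasoning (``every open neighbourhood in $F\cong 2^{\omega}$ of the all-$+$ point contains non-$Y$ variants'') controls only the case where the Hider lands in an \emph{open} set containing $(x,\mathbf 1)$; but the Seeker can force the Hider into the \emph{closed} singleton $\{(x,\mathbf 1)\}$ by playing its complement. The trouble is that on each fiber $\{x\}\times 2^{\omega}$ your $Y$ is a single point. A correct witness must have dense and codense trace on every fiber---for example $Y=[0,1]^{\omega}\times D$ for $D\subseteq 2^{\omega}$ dense and codense, which lets the Hider keep a nonempty relatively open subset of the fiber through any finite tail of moves. (The paper handles the analogous uncountable case via a partition into $G_{<\kappa}$-dense pieces.)
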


In Theorem  \ref{pr:stat} we proved that ${\sm}(S)={\kappa}$
whenever ${\kappa}$ is a regular cardinal and $S$ 
is a stationary subset of ${\kappa}$. However,   so far
we have not been able to compute  $\ps$ and ${\sm}$ for the Suslin lines or for ${\omega}^*$.

\smallskip

\subsection*{Topological sums and products.}

 In Section \ref{sc:sum-prod}
 we investigate the behavior of cardinal invariants $\ps$
 and ${\sm}$ under topological sums and products. 

 The behavior of $\ps$ under topological sums is particularly well-behaved. 
In Theorem \ref{tm:pssumgen}
 we show that for each infinite cardinal ${\kappa}$,  
     \begin{displaymath}
     \ps\big(\sum \{X_{\sigma}:{\alpha}<{\kappa}\}\big)=
     \log({\kappa})\cdot \sup\{\ps(X_{\alpha}:{\alpha}<{\kappa})\}.
     \end{displaymath}

The behavior of ${\sm}$ on sum is completely different as
shown by the next two theorems. 
In Theorem \ref{tm:sumsm} we show
    \begin{displaymath}
    {\sm}(\sum_{{\alpha}<{\kappa}}X_{\alpha})\le \sup_{{\alpha}<{\kappa}}|X_{\alpha}|
    \end{displaymath}
    provided that every $X_{\alpha}$ is $T_1$. 
In Theorem \ref{tm:sm-limit}
we show 
        $${\sm}(\sum_{{\alpha}\in I}X_{\alpha})\le 
        (\sup_{{\alpha}\in I}{\sm}(X_{\alpha}))\oplus1.$$
      
 These two results raise the following problem:      
      \begin{restatable}{problem}{exactsum}\label{cr:product2}
         Is it true that ${\sm}(\sum_{{\alpha}<{\lambda}}X_{\alpha})= \sup\{{\sm}(X_{\alpha}):{\alpha}<{\lambda}\}$?
         \end{restatable}

         Next, we turn our attention to products of spaces.

         In Theorem \ref{pro:ps-prod2}      
  we show that if $\cf(\sup\{\ps(X_{\alpha}:{\alpha}<{\lambda})\})\ge {\lambda}$, then 
$$\ps(\prod_{{\alpha}<{\lambda}}X_{\alpha})=\sup\{\ps(X_{\alpha}:{\alpha}<{\lambda})\}).$$

In contrast, the behavior of ${\sm}$ under products is very different.

\begin{example*}
For each cardinal ${\kappa}$ there is a  space $X$ such that 
${\sm}(X)=2$ and ${\sm}(X\times X)\ge {\omega}$. 
\end{example*}

\begin{proof}
Indeed, by \cite{JuSoSz06}*{Theorem 4.1}, there is a  submaximal space  $X$ with $|X|=\Delta(X)={\kappa}$.
Then ${\sm}(X)=2$, $X\times X$ is resolvable, so ${\sm}(X\times X)\ge {\omega}$
by Theorem \ref{tm:resolvable}. 
\end{proof}

\begin{restatable}{problem}{productsm} Is it true that 
    for each cardinal ${\kappa}$ there is a  space $X$ such that 
    ${\sm}(X)=2$ and ${\sm}(X\times X)\ge {\kappa}$.
\end{restatable}

\subsection*{Metric spaces.}
      
         By \cite{Scott86}*{Theorem 1.7}, we have 
         $\psn( X)  = \log |X |$ for every metric space $X$. 
In Section \ref{sc:developable} we prove  Theorem \ref{tm:quasi}, which  immediately implies the following result. 
\begin{theorem*}
    For each metric space $X$, $\ps(X)=\log(|X|)$.
\end{theorem*}

We do not have a similar closed formula to compute ${\sm}(X)$ for a metric space $X$,
but in Theorem \ref{tm:metric-sm}
we show that for each cardinal ${\kappa}$ there is a metric space with ${\sm}(X)>{\kappa}.$

\section{Inequalities}\label{sc:ineq}

\begin{theorem}\label{tm:ps2psn}   
    Assume that $X$ is a $T_0$ space.
    
    \noindent {(a) $|X|\le 2^{\ps(X)}$.}
    
     \noindent    (b) ${\psn}(X)\le 2^{<\ps(X)}$.
    In particular, if
    $\ps(X)={\omega}$, then ${\psn}(X)={\omega}$ as well.
    \end{theorem}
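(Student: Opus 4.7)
The plan is to fix a winning strategy $\sigma$ for the Seeker in $\gamexb{X}{\ps(X)}$ and, for each point $p\in X$, to simulate this strategy against the \emph{truthful} Hider who responds $\mb i_p(\alpha)=1$ precisely when $p\in\mb U_p(\alpha)$. By transfinite recursion on $\alpha$ this defines a play $(\mb U_p,\mb i_p,\mb V_p)$; since the Hider is truthful, $p$ lies in every $\mb V_p(\alpha)$ and hence in the outcome $\mb W_p$.

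For part (a), I would show that $p\mapsto\mb i_p$ embeds $X$ into ${}^{\ps(X)}2$. Indeed, if $\mb i_p=\mb i_q$, then, because $\sigma$ depends only on the prior Hider responses, an induction on $\alpha$ gives $\mb U_p(\alpha)=\mb U_q(\alpha)$ for every $\alpha<\ps(X)$; so $\mb W_p=\mb W_q$ contains both $p$ and $q$, contradicting $|\mb W_p|\le 1$. Hence $|X|\le|{}^{\ps(X)}2|=2^{\ps(X)}$.

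For part (b), let $\mc F=\{\sigma(\mb i\restriction\alpha):\alpha<\ps(X),\ \mb i\in{}^{\ps(X)}2\}$ be the family of all open sets that $\sigma$ might play. To see that $\mc F$ is $T_0$-separating, given distinct $p,q\in X$ let $\alpha^*$ be the least stage where $\mb i_p$ and $\mb i_q$ disagree (which exists by part (a)); the same induction as above yields $\mb U_p(\alpha^*)=\mb U_q(\alpha^*)$, and truthfulness forces this common open set to contain exactly one of $p,q$. For the cardinality bound, the Seeker's move at stage $\alpha$ is determined by a Hider response in ${}^\alpha 2$, so $|\mc F|\le\sum_{\alpha<\ps(X)}2^{|\alpha|}\le 2^{<\ps(X)}$, using the easy inequality $|\ps(X)|\le 2^{<\ps(X)}$. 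The ``in particular'' clause then follows at once: $2^{<\omega}=\omega$ gives $\psn(X)\le\omega$, while $\ps(X)=\omega$ forces $X$ infinite (since $\ps(X)\le|X|$ for finite $X$), so $\psn(X)\ge\omega$.

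The whole argument is a strategy-tree unraveling and presents no serious obstacle; the only step deserving real care is the routine cardinal-arithmetic tail in part (b), where one checks $|\ps(X)|\le 2^{<\ps(X)}$ separately in the successor and limit cases.
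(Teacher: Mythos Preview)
Your proof is correct and follows essentially the same approach as the paper: fix a winning strategy $\sigma$, use truthful Hider plays to inject $X$ into ${}^{\ps(X)}2$ for part~(a), and take $\ran(\sigma)$ as the $T_0$-separating family for part~(b). The only stylistic difference is in verifying that $\ran(\sigma)$ separates points: you locate the first disagreement between the truthful plays $\mb i_p$ and $\mb i_q$ and observe that the common open set played at that stage separates $p$ from $q$, whereas the paper argues by contradiction, letting the Hider respond $\mb i(\alpha)=1$ iff $\{a,b\}\subset\mb U(\alpha)$ whenever no set in $\ran(\sigma)$ separates $\{a,b\}$. These are dual formulations of the same observation.
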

    
    \begin{proof}
    Let ${\sigma}$ be a winning strategy of Seeker in the game $\gamexb{X}{{\ps(X)}}$.

    To prove  (a), fix $x\in X$ and consider the following play of the game  $\gamexb{X}{{\ps(X)}}$. The Seeker follows the strategy ${\sigma}$, while  
    the Hider respond so that $\mb i({\alpha})=1$ iff $x\in \mb U({\alpha})$. 
    Consequently,  $x\in \bocox{\mathbf U}{\mathbf i}{X}$.  
 Since the Seeker wins,   we have $$\bocox{\mathbf U}{\mathbf i}{X}=\{x\}.$$
    Thus,  the sequence $\mathbf i$ uniquely  determines $x$. 
    As there are exactly $2^{\ps(X)}$ many such  
    sequences $\mathbf i$, the  proof of (a) is complete.

    To show (b)
    we claim that 
    \begin{displaymath}\tag{$\star$}
    \forall \{a,b\}\in {[X]}^{2}\ \exists U\in \ran({\sigma})\text{ such that } |U\cap \{a,b\}|=1.
    \end{displaymath}
    Assume, on the contrary, that  this is not true for some $\{a,b\}\in {[X]}^{2}$.
    Then let Hider play the following strategy:
    \begin{displaymath}
    \mathbf i({\alpha})=1 \text{ iff } \{a,b\}\subs \mathbf U({\alpha}).
    \end{displaymath}  
    So $\mathbf i({\alpha})=0$ iff $\{a,b\}\subs X\setm \mathbf U({\alpha})$.
    Hence, $\{a,b\}\subs \boco{\mathbf U}{\mathbf i} $ and so 
    Hider wins. So ${\sigma}$ is not a winning strategy for the Seeker.  Contradiction. So $(\star)$ holds.

    Since $|\ran({\sigma})|\le 2^{<\ps(X)}$, the proof of (b) is complete. 
\end{proof}

    \begin{theorem}\label{tm:pssmlog}
        Let $X$ be a $T_0$-space and let ${\delta}=\max({\sm}(X),\log(|X|))$.
        If ${\delta}$ is  $\log(|X|)$-decomposable,  then 
        $\ps(X)={\delta}$.
        \end{theorem}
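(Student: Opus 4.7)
The plan is to prove $\delta\le\ps(X)$ and $\ps(X)\le\delta$ separately. The lower bound comes for free from facts already in place: $\sm(X)\le\ps(X)$ is recorded just after Definition~\ref{df:game_YX} (a strategy producing $|\mb W|\le 1$ trivially gives $\mb W\subs Y$ or $\mb W\cap Y=\empt$ for every $Y\subs X$), and $\log(|X|)\le\ps(X)$ follows from Theorem~\ref{tm:ps2psn}(a). Together these yield $\delta\le\ps(X)$.

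For the upper bound, set $\kappa=\log(|X|)$. Since $|X|\le 2^{\kappa}$, I would fix an injection $e:X\hookrightarrow{}^{\kappa}2$ and, for each $\alpha<\kappa$, consider the subset $Y_{\alpha}=\{x\in X:e(x)(\alpha)=1\}$; then $(Y_{\alpha})_{\alpha<\kappa}$ separates the points of $X$. By the definition of $\sm(X)$, the Seeker has a winning strategy $\sigma_{\alpha}$ in $\gamexyb{X}{Y_{\alpha}}{\sm(X)}$ for every $\alpha<\kappa$. The idea is to simulate these $\kappa$ subgames inside a single play of $\gamexb{X}{\delta}$, and this is exactly where the $\kappa$-decomposability of $\delta$ enters, by supplying room to allocate the moves.

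Concretely, I would fix a partition $\delta=\bigsqcup_{\alpha<\kappa}A_{\alpha}$ with each $A_{\alpha}$ of order type $\delta\ge\sm(X)$, take $B_{\alpha}\subs A_{\alpha}$ to be the initial segment of order type $\sm(X)$, and define the Seeker's strategy in $\gamexb{X}{\delta}$ as follows: at move $\beta$, find the unique $\alpha$ with $\beta\in A_{\alpha}$; if $\beta$ is the $\eta$-th element of $B_{\alpha}$, then play what $\sigma_{\alpha}$ prescribes after seeing the Hider's responses at the preceding $\eta$ elements of $B_{\alpha}$; otherwise, play $\empt$. The verification then goes as follows: for each $\alpha$, the substate $\mb W_{\alpha}=\bigcap_{\beta\in B_{\alpha}}\mb V(\beta)$ is the outcome of a genuine play of $\gamexyb{X}{Y_{\alpha}}{\sm(X)}$ in which the Seeker followed $\sigma_{\alpha}$, so $\mb W_{\alpha}\subs Y_{\alpha}$ or $\mb W_{\alpha}\cap Y_{\alpha}=\empt$. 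This pins down a bit $i_{\alpha}\in 2$ forcing $e(x)(\alpha)=i_{\alpha}$ for every $x\in\mb W\subs\bigcap_{\alpha<\kappa}\mb W_{\alpha}$; by injectivity of $e$, $|\mb W|\le 1$.

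The only delicate point I anticipate is the bookkeeping that certifies the $\kappa$ subgames are mutually independent---each $\sigma_{\alpha}$ consults only the Hider's responses at indices in $B_{\alpha}$, so spreading those indices throughout $\delta$ via the partition does not disturb its computation. Once this is set out carefully, the argument amounts to routine unpacking of the definitions.
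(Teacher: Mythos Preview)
Your proof is correct and follows essentially the same approach as the paper: both fix a point-separating family of size $\log(|X|)$, partition $\delta$ into $\log(|X|)$ blocks via decomposability, and run the corresponding set-membership subgames in parallel on those blocks to pin down the Hider's point. The only cosmetic difference is that you take initial segments of length $\sm(X)$ within each block and pad the remaining moves with $\empt$, whereas the paper simply uses winning strategies of length $\delta$ on the full blocks.
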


\begin{proof}
 By Theorem   \ref{tm:ps2psn}(a), we have $\ps(X)\ge\max({\sm}(X),\log(|X|))={\delta}$. 

To show that $\ps(X)\le {\delta}$, 
choose
a family  $\{A_{\alpha}:{\alpha}<\log(|X|)\}\subs \mc P(X)$ such that 
for every $\{p,q\}\in {[X]}^{2}$, there exists ${\alpha}$
with $|\{p,q\}\cap A_{\alpha}|=1$.

Since ${\delta}$ is  $\log(|X|)$-decomposable, we can 
partition ${\delta}$ into $\log(|X|)$ many pieces, each of order type ${\delta}$, 
say $\{I_\alpha : \alpha < \log(|X|)\}$.  
For each $\zeta\in\delta$ let $\alpha_{\zeta}<\kappa$ be the unique index such that 
$\zeta\in I_{\alpha_{\zeta}}$.

    Informally, the Seeker’s strategy is to play $\kappa$ parallel games, using the coordinates in $I_{\alpha}$
    to determine whether the point selected by the Hider is in 
    $A_{\alpha}$ or not.
    
    Formally, for each $\alpha<\kappa$ let $\sigma_{\alpha}$ be a winning strategy for 
    the Seeker in the game $\gamexyb{X}{A_{\alpha}}{\delta}$.

    Define the strategy $\sigma$ in $\gamexb{X}{{\delta}}$ for the Seeker as follows.

    Suppose the game has proceeded up to stage $\zeta$, so that 
    $\mathbf{U}\restriction \zeta$ and $\mathbf{i}\restriction \zeta$ are known.  
    Set
    \begin{displaymath}
    \mathbf{U}(\zeta) =
    \sigma_{\alpha_{\zeta}}\bigl(\mathbf{i}\restriction (I_{\alpha_{\zeta}} \cap \zeta)\bigr).
    \end{displaymath}

    After ${\delta}$ steps, denote $\mb W$ the outcome of the game.

Now, the pair $\<\mb U\restriction I_{\alpha}, \mb i\restriction I_{\alpha}\>$ codes a play
of the game $\gamexyb{X}{A_{\alpha}}{{\delta}}$ in which  the Seeker follows the strategy 
${\sigma}_{\alpha}$. 
Hence, the outcome of that game,  $\mb  W^{\alpha}=
    \bocox{\mb U\restriction I_{\alpha}}{\mb i\restriction I_{\alpha}}{X}$
is either a subset of $A_{\alpha}$ or disjoint to $A_{\alpha}$.
Consequently, $\mb W\subs \mb W^{\alpha}$ is also either a subset of $A_{\alpha}$ or disjoint to $A_{\alpha}$.

Since $\{A_{\alpha}:{\alpha}<\ps(|X|)\}$ is a point-separating family, it follows
that $|\mb W|\le 1$.  Therefore, using the strategy ${\sigma}$, the Seeker wins the game $\gamexb{X}{{\delta}}$. 
\end{proof}

\begin{corollary}\label{cr:smpslog}
    Let $X$ be a $T_0$-space. If ${\sm}(X)$ is a cardinal, then  
    $\ps(X)=\max({\sm}(X),\log(|X|))$.

\end{corollary}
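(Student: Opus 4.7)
The plan is to deduce the corollary directly from Theorem~\ref{tm:pssmlog} by verifying its decomposability hypothesis. Setting $\delta := \max(\sm(X), \log(|X|))$, the theorem yields $\ps(X) = \delta$ as soon as $\delta$ is $\log(|X|)$-decomposable, so the only task is to show that the cardinal assumption on $\sm(X)$ suffices to force this.

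First I would reduce to the case that $X$ is infinite (the case $|X|\le 1$ being trivial), so that $\log(|X|)$ is an infinite cardinal. Under the hypothesis, $\sm(X)$ is also a cardinal, hence $\delta$ is an infinite cardinal with $\log(|X|)\le\delta$. Next I would split into two subcases. If $\delta=\log(|X|)$, the decomposition $\delta=\log(|X|)\otimes 1$ is immediate. Otherwise $\log(|X|)<\delta$ are two distinct infinite cardinals, and I would invoke the standard ordinal-arithmetic identity $\kappa\otimes\lambda=\lambda$ valid whenever $\kappa<\lambda$ are infinite cardinals. Unwinding the recursion $\kappa\otimes\lambda=\sup_{\gamma<\lambda}\kappa\otimes\gamma$: each term has cardinality $\max(\kappa,|\gamma|)<\lambda$ and so lies below $\lambda$, while the supremum already reaches $\lambda$ because $\kappa\otimes\gamma\ge\gamma$ for $\kappa\ge 1$. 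Applied with $\kappa=\log(|X|)$ and $\lambda=\delta$, this yields $\delta=\log(|X|)\otimes\delta$, the required decomposition. Theorem~\ref{tm:pssmlog} then delivers $\ps(X)=\delta$.

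There is no serious obstacle in this argument; the whole corollary is a one-line unpacking of the decomposability condition under the cardinal hypothesis. The only subtle point I would flag is that strict inequality $\kappa<\lambda$ is essential for $\kappa\otimes\lambda=\lambda$ (the identity fails for $\kappa=\lambda$, as $\omega\otimes\omega=\omega^{2}\ne\omega$), which is precisely why the case $\delta=\log(|X|)$ must be handled separately through the trivial decomposition by $1$.
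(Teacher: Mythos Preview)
Your argument is correct and is exactly the deduction the paper intends: the corollary is stated without proof immediately after Theorem~\ref{tm:pssmlog}, and the only thing to check is that when $\sm(X)$ is a cardinal the ordinal $\delta=\max(\sm(X),\log|X|)$ is $\log|X|$-decomposable, which you verify cleanly via $\delta=\log|X|\otimes 1$ in the equal case and $\kappa\otimes\lambda=\lambda$ for infinite cardinals $\kappa<\lambda$ in the strict case. The one cosmetic gap is that your reduction handles $|X|\le 1$ but silently skips $2\le|X|<\omega$; there every ordinal in sight is a natural number and the decomposability criterion becomes a divisibility condition that need not hold, so Theorem~\ref{tm:pssmlog} is not directly applicable --- but the paper's focus throughout is on infinite spaces, and this edge case is not a genuine obstacle.
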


If $\oset(X)<2^{|X|}$, then the following theorem gives a lower bound for 
$\sm(X)$.

\begin{theorem}\label{tm:Xosm}
If $X$ is a $T_0$ space, then 
\begin{displaymath}
\oset(X)^{2^{\sm(X)}}\ge 2^{|X|}.
\end{displaymath}
\end{theorem}

\begin{proof}
    By definition of $\sm(X)$,    
 for each  subset $Y\subs X$,  
the Seeker has a winning strategy ${\sigma}_Y$ in the game 
$\gamexyb{X}{Y}{\sm(X)}$.

Each strategy  ${\sigma}_Y$ is a function mapping the tree  
$\bigcup\{{^{\alpha}}2:{\alpha}<\sm(X)\}$
 into the collection of open subsets of $X$.
Hence, 
\begin{equation}\label{eq:numsigma}
|\{{\sigma}_Y:Y\subs X\}|\le  \oset(X)^{2^{<\sm(X)}}\le \oset(X)^{2^{\sm(X)}}.
\end{equation}

For each $Y\subs X$, define 
a function ${\varphi}_Y:{}^{\sm(X)}2\to 2$ by
\begin{displaymath}
\text{${\varphi}_Y(f)=1$ iff $A_{{\sigma}_Y,f}\subs Y$},
\end{displaymath} 
where $A_{{\sigma}_Y,f}$  is the outcome of the game $\gamexyb{X}{Y}{\sm(X)}$  in which the
Seeker follows the strategy  ${\sigma}_Y$ and Hider plays $f({\zeta})$ at the ${\zeta}$th stage of the game.
Since ${\sigma}_Y$ is a winning strategy in $\gamexyb{X}{Y}{\sm(X)}$, we 
also have  
\begin{displaymath}
    \text{${\varphi}_Y(f)=0$ iff $A_{{\sigma}_Y,f}\cap Y=\empt$.}
\end{displaymath}
Hence, 
\begin{equation}\label{eq:decode}\tag{$\dag$}
Y=\bigcup\{A_{{\sigma}_Y,f}:f\in {}^{\sm(X)}2\land  {\varphi}_Y(f)=1\}.
\end{equation}
Thus,  
\begin{equation}\label{eq:oneone}
\text{the mapping $Y\mapsto \<{\sigma}_Y,\varphi_Y\>$ is injective.}
\end{equation}
The number of possible ${\varphi}_Y$ is at most
$2^{2^{\sm(X)}}$, since $\varphi_Y$  is a function with 
 domain  ${}^{\sm(X)}2$. Therefore, we  have
\begin{equation}\label{eq:phinum}
|\{\varphi_Y:Y\subs X\}|\le 2^{2^{\sm(X)}}.
\end{equation}
Combining  \eqref{eq:numsigma}, \eqref{eq:phinum} and \eqref{eq:oneone}, we obtain 
\begin{equation}\label{eq:final}\notag
2^{|X|}=|\mc P(X)|\le |\{\<{\sigma}_Y,\varphi_Y\>:Y\subs X\}|\le
\oset(X)^{2^{\sm(X)}}\cdot
2^{2^{\sm(X)}}=\oset(X)^{2^{\sm(X)}},
\end{equation}
which completes the proof.
\end{proof}

\begin{corollary}\label{tm:smlarge}
Assume that ${\kappa}$ is an infinite cardinal such that $2^{\lambda}\le {\kappa}$ for each ${\lambda}<{\kappa}$. 
If $X$ is a $T_0$ space with       $|X|=2^{{\kappa}}$ and   $\we(X)={\kappa}$, then $\sm(X)={\kappa}$ as well.
\end{corollary}

\begin{proof}
    Since $\sm(X)\le \we(X)$, we have $\sm(X)\le {\kappa}$.
    
    To show the reverse inequality, first observe that $\we(X)={\kappa}$
implies $\oset(X)\le 2^{\kappa}$. 

Hence, using Theorem \ref{tm:Xosm}
we have 
\begin{displaymath}
2^{{\kappa}\cdot 2^{\sm(X)}}\ge 2^{2^{\kappa}}.
\end{displaymath} 
If $\sm(X)<{\kappa}$, then $2^{\sm(X)}\le {\kappa}$,
so the LHS  of the previous inequality is $2^{\kappa}$ which is smaller than 
$2^{2^{\kappa}}$. Hence, $\sm(X)\ge{\kappa}$, which completes the proof of the Corollary.
\end{proof}

\begin{corollary}[GCH]\label{cr:smlarge}
If $X$ is an infinite  $T_0$-space with $|X|=2^{\we(X)}$,
then $\sm(X)=\we(X)$. 
\end{corollary}

\begin{proof}
If GCH holds, then the previous theorem can be applied for arbitrary 
${\kappa}=\sm(X)$.
\end{proof}

\section{Games on topological sums and products}\label{sc:sum-prod}

\subsection*{Topological sum}

In \cite{Scott86}*{Theorem 1.1} Scott proved the following:\\
 {\em If    $\psn(X_{\alpha}) \le {\kappa} $ for
 ${\alpha} < 2^{\kappa}$, then  $\psn(\sum_{{\alpha}<2^{\kappa}}X_{\alpha})\le{\kappa}$. 
 } The next theorem is a similar  statement
 concerning $\ps$.

\begin{theorem}\label{tm:pssumgen}
Let $\<X_{\alpha}:{\alpha}<{\kappa}\>$ be $T_0$ spaces.
If $\max(\log(\kappa),\sup\{\ps(X_{\alpha}) : \alpha<\kappa\})$ is infinite,
then 
\begin{displaymath}
    \ps\!\Big(\sum\nolimits_{\alpha<\kappa} X_{\alpha}\Big) =
    \max(\log(\kappa),\sup\{\ps(X_{\alpha}) : \alpha<\kappa\}).
\end{displaymath}
\end{theorem}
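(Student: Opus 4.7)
Set $\lambda=\log(\kappa)$, $\mu=\sup_{\alpha<\kappa}\ps(X_{\alpha})$, $\delta=\max(\lambda,\mu)$, and $X=\sum_{\alpha<\kappa}X_{\alpha}$. The plan is to prove the two inequalities separately.

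\emph{Lower bound.} Assuming without loss of generality that every $X_{\alpha}$ is nonempty, $|X|\ge\kappa$, so Theorem~\ref{tm:ps2psn}(a) gives $\ps(X)\ge\log|X|\ge\lambda$. Each $X_{\alpha}$ is clopen in $X$, so intersecting the moves of any Seeker winning strategy on $X$ with $X_{\alpha}$ produces a Seeker winning strategy on $X_{\alpha}$ of the same length: Hider's response to $U\cap X_{\alpha}$ in the sub-game matches Hider's response to $U$ in the full game whenever the hidden point lies in $X_{\alpha}$. Hence $\ps(X_{\alpha})\le\ps(X)$ for every $\alpha$, so $\mu\le\ps(X)$.

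\emph{Upper bound.} Fix a separating family $\{B_{\gamma}:\gamma<\lambda\}\subseteq\mc P(\kappa)$ (such exists as $\kappa\le 2^{\lambda}$) and, for each $\alpha<\kappa$, a Seeker winning strategy $\sigma_{\alpha}$ on $\gamexb{X_{\alpha}}{\ps(X_{\alpha})}$, extended arbitrarily past $\ps(X_{\alpha})$. The naive plan of first identifying the index $\alpha_0$ with $p\in X_{\alpha_0}$ in $\lambda$ moves and then running $\sigma_{\alpha_0}$ would cost $\lambda+\ps(X_{\alpha_0})$ ordinally, which can exceed $\delta$; I would instead interleave. Choose a partition $\delta=J_0\sqcup J_1$ with $\operatorname{otp}(J_0)\ge\lambda$ and $\operatorname{otp}(J_1)\ge\mu$ (see below). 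The Seeker plays, at the $\gamma$-th element of $J_0$, the clopen set $\sum_{\alpha\in B_{\gamma}}X_{\alpha}$, and at the $\xi$-th element of $J_1$, the open set $\bigsqcup_{\alpha<\kappa}\sigma_{\alpha}(\mathbf i\restriction J_1\text{, reindexed up to length }\xi)$. Let $W$ be the resulting outcome. For $\alpha\ne\alpha_0$, pick $\gamma$ with $|B_{\gamma}\cap\{\alpha,\alpha_0\}|=1$; the $J_0$-move at $\gamma$ forces $V_{\zeta}\cap X_{\alpha}=\empt$, so $W\cap X_{\alpha}=\empt$. For $\alpha=\alpha_0$, the projection of the $J_1$-moves to $X_{\alpha_0}$ is an honest play of $\gamexb{X_{\alpha_0}}{\operatorname{otp}(J_1)}$ with a truthful Hider (since $p\in U\iff p\in U\cap X_{\alpha_0}$), and $\sigma_{\alpha_0}$ wins within the first $\ps(X_{\alpha_0})\le\operatorname{otp}(J_1)$ moves, yielding $W\cap X_{\alpha_0}\subseteq\{p\}$. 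Hence $W\subseteq\{p\}$ and the Seeker wins $\gamexb{X}{\delta}$.

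\emph{Main obstacle.} The delicate ingredient is the partition $\delta=J_0\sqcup J_1$ with the stated order types. When $\delta$ is not a cardinal (e.g.\ $\delta=\omega+5$ with $\lambda=\omega$), the naive split $[0,\lambda)\sqcup[\lambda,\delta)$ fails because the tail $[\lambda,\delta)$ may have order type strictly less than $\mu$. The fix, which I would verify in a short lemma, is to split the initial segment $[0,\lambda)\subseteq\delta$ into two cofinal subsets of order type $\lambda$ each (possible for any infinite cardinal $\lambda$, for instance via ``$\omega$-block even/odd'' parity; the finite case is trivial), put one into $J_0$, and put the other together with the tail $[\lambda,\delta)$ into $J_1$; this yields $\operatorname{otp}(J_1)=\lambda+(\delta-\lambda)=\delta\ge\mu$ as required.
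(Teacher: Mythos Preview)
Your proposal is correct and follows essentially the same approach as the paper: both prove the upper bound by partitioning $\delta$ into two pieces and interleaving index-identifying moves (via a separating family on $\kappa$) with a parallel play of the strategies $\sigma_\alpha$ inside each summand. You are in fact more explicit than the paper in two places: you justify the existence of the partition $\delta=J_0\sqcup J_1$ (the paper merely asserts that such a partition exists ``since $\delta$ is infinite''), and you spell out why $\ps(X_\alpha)\le\ps(X)$ for the lower bound (the paper cites only Theorem~\ref{tm:ps2psn}(a), which by itself yields only the $\log(\kappa)$ half).
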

    
    \begin{proof} By Theorem \ref{tm:ps2psn}(a), the inequality ``$\ge$'' follows immediately.

        To show the reverse inequality, 
    let 
    \begin{displaymath}
        X=\sum_{\alpha<\kappa} X_{\alpha}\text{ and }        {\delta}=\max(\log(\kappa),\sup\{\ps(X_{\alpha}) : \alpha<\kappa\}).
\end{displaymath}
Since ${{\delta}}$ is infinite, it has a partition ${{\delta}}=E\cup O$
such that $\operatorname{tp}(O)=\log(\kappa)$
and $\operatorname{tp}(E)=\sup\{\ps(X_{\alpha}) : \alpha<\kappa\}$.
    
    Fix a family $\mathcal A=\{A_{\zeta} : \zeta\in \log({\kappa})\}\subs \mc P({\kappa})$ such that 
    for each $\{\alpha,\beta\}\in[\kappa]^2$ there exists $\zeta<{\delta}$
    with $\alpha\in A_{\zeta}$ and $\beta\notin A_{\zeta}$.
    
    For each $\alpha<\kappa$, let $\sigma_{\alpha}$ be a winning strategy of 
    the Seeker in the game $\gamexb{X_{\alpha}}{{\operatorname{tp}(E)}}$.

    Informally, the Seeker play the following   
    strategy   in the game $\gamexb{X}{{\delta}}$:
    on  coordinates in $O$, the Seeker plays to determine the unique $\alpha$ such that 
    $X_{\alpha}$ contains the Hider’s chosen point.  
    On coordinates in $E$, the Seeker plays a parallel copy of the winning  strategy $\sigma_{\alpha}$ 
    inside each $X_{\alpha}$.

    \smallskip
    The  formal definition of the Seeker's strategy ${\sigma}$ is as follows.  
    If $\xi\in O$, then the Seeker’s move is
    independent of the Hider’s previous moves:
    \begin{displaymath}
    \sigma(\mathbf i\restriction \xi)=\bigcup\{X_{\alpha} : 
    \alpha\in A_{\operatorname{tp}(O\cap {\xi})}\}.
    \end{displaymath}
    If $\xi\in E$, then 
    \begin{displaymath}
    \sigma(\mb i\restriction \xi)=\bigcup_{\alpha<\kappa}\sigma_{\alpha}(\mathbf i\restriction E).
    \end{displaymath}
    
    \medskip
    \noindent
    We now verify that ${\sigma}$ is really a winning strategy for the Seeker in the game $\gamexb{X}{{\delta}}$.

The family $\mathcal{A}$ separates the points of $\kappa$, so the Seeker’s moves on coordinates in $O$ guarantee that there exists a unique $\alpha < \kappa$ such that
\begin{displaymath}
\bocox{\mathbf{U} \restriction O}{\mathbf{i} \restriction O}{X} \subseteq X_{\alpha}.
\end{displaymath}
For each $\xi \in E$, we have
\begin{displaymath}
\sigma(\mathbf{i} \restriction \xi) \cap X_{\alpha} = \sigma_{\alpha}(\mathbf{i} \restriction E),
\end{displaymath}
so the trace of the game on $X_{\alpha}$ using coordinates in $E$ is exactly a game in $\gamexb{X_{\alpha}}{\delta}$ where the Seeker follows the winning strategy $\sigma_{\alpha}$.  
Hence,
\begin{displaymath}
\bigl| \bocox{\mathbf{U} \restriction E}{\mathbf{i} \restriction E}{X} \cap X_{\alpha} \bigr| \le 1.
\end{displaymath}

It follows that 
\begin{displaymath}
|\bocox{\mathbf{U}}{\mathbf{i}}{X}| \le 1,
\end{displaymath}
so the Seeker wins the game $\gamexb{X}{\delta}$, as required.
    \end{proof}

\begin{restatable}{theorem}{tmsumsm}\label{tm:sumsm}
    \begin{displaymath}
    {\sm}(\sum_{{\alpha}<{\kappa}}X_{\alpha})\le \sup_{{\alpha}<{\kappa}}|X_{\alpha}| 
    \end{displaymath}
    provided that every $X_{\alpha}$ is $T_1$. 
    \end{restatable}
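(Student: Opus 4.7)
The plan is to describe, for each $Y\subs X:=\sum_{\alpha<\kappa}X_\alpha$, a predetermined winning strategy of length $\mu:=\sup_{\alpha<\kappa}|X_\alpha|$ for the Seeker in the game $\gamexyb{X}{Y}{\mu}$. The degenerate cases $Y=\empt$ and $\mu=0$ are trivial, so assume neither occurs.

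For each $\alpha<\kappa$ with $Y_\alpha:=Y\cap X_\alpha\ne\empt$, fix an enumeration $Y_\alpha=\{y^\alpha_\beta:\beta<\mu\}$, allowing repetitions; this is possible because $|Y_\alpha|\le|X_\alpha|\le\mu$. At stage $\beta<\mu$, the Seeker plays
\begin{displaymath}
U_\beta=X\setm\{y^\alpha_\beta:\alpha<\kappa,\ Y_\alpha\ne\empt\}.
\end{displaymath}
I would then verify that $U_\beta$ is open: its complement meets each summand $X_\alpha$ in at most one point, which is closed in $X_\alpha$ by the $T_1$ hypothesis, and hence closed in $X$ because $X_\alpha$ is clopen in the topological sum.

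The proof that this strategy wins is a clean dichotomy on the Hider's play. If $\mathbf{i}(\beta)=1$ for every $\beta<\mu$, then on each non-trivial summand the trace of $\mathbf{W}$ is $\bigcap_\beta(X_\alpha\setm\{y^\alpha_\beta\})=X_\alpha\setm Y_\alpha$, while on trivial summands the trace is all of $X_\alpha$; assembling gives $\mathbf{W}=X\setm Y$, so $\mathbf{W}\cap Y=\empt$. If instead $\mathbf{i}(\beta_0)=0$ for some $\beta_0<\mu$, then $\mathbf{W}\subs\mathbf{V}(\beta_0)=X\setm U_{\beta_0}=\{y^\alpha_{\beta_0}:Y_\alpha\ne\empt\}\subs Y$. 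In either case the Seeker wins, which yields ${\sm}(Y,X)\le\mu$ for every $Y\subs X$, and the desired bound on ${\sm}(X)$ follows.

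The only genuine technical content — and the only use of the $T_1$ hypothesis — is the verification that each $U_\beta$ is open. The apparent obstacle is handling the summands with $Y_\alpha=\empt$, since those contribute all of $X_\alpha$ to $\mathbf{W}$; the definition of $U_\beta$ sidesteps this by removing points only from non-trivial $Y_\alpha$, so such summands lie entirely inside $U_\beta$ for every $\beta$, hence inside $X\setm Y$ in the first case and outside $\mathbf{V}(\beta_0)$ in the second, never contaminating the winning condition.
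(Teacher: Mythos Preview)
Your proof is correct and takes a genuinely different route from the paper's. Both arguments exploit the $T_1$ hypothesis in the same way---removing one point from each summand yields an open set---but they organise the strategy differently. The paper embeds $X$ in $\kappa\times\lambda$ and plays an \emph{adaptive} strategy: at step $\eta$ the Seeker removes the horizontal slice $\kappa\times\{\eta\}$, and the first time the Hider answers $0$ the state collapses to a single slice; one further question (a union of summands depending on $Y$) then decides membership. Your strategy is \emph{oblivious}: you enumerate $Y$ rather than $X$, removing one $Y$-point from each nontrivial summand at every step, so that a single $0$-answer already forces $\mathbf W\subs Y$, and all $1$'s force $\mathbf W=X\setminus Y$. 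Your version avoids the follow-up question entirely and therefore works without modification even when $\mu$ is finite (whereas the paper's argument, as written, needs $\zeta\oplus1<\lambda$, which can fail at the last step of a finite game). The paper's approach, on the other hand, makes the two conceptual stages---localisation to a slice, then a single membership query---more visible.
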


\begin{proof}
    Write  ${\lambda}=\sup_{{\alpha}<{\kappa}}|X_{\alpha}|$ and 
    $X=\sum_{{\alpha}<{\kappa}}X_{\alpha}.$ 
    We may assume that the underlying set of 
    $X_{\alpha}$ is contained in $\{{\alpha}\}\times {\lambda}$, and hence   
    $X\subs {\kappa}\times {\lambda}$. 

Fix $Y\subs X$.    Let the Seeker play the following strategy in  the game $\gamexyb{X}{Y}{{\lambda}}$.
    \begin{enumerate}[(1)]
    \item     If $\mb i({\zeta})=1$ for each ${\zeta}<{\eta}$, 
    then let $\mb U({\eta})=X\setm ({\kappa}\times \{{\eta}\})$.
    Since every $X_{\alpha}$ is  $T_1$, $\mb U({\eta})\in {\tau}_X$.
\item If $\mb i({\zeta})=0$ then let 
\begin{displaymath}
\mb U({\zeta}\oplus 1)=\bigcup\{X_{\alpha}:{\alpha}<{\kappa} \land \<{\alpha},{\zeta}\>\in Y\}.
\end{displaymath}
    \end{enumerate}
We claim that it is a winning strategy for the Seeker. 

Indeed, if $\mb i({\eta})=1$ for each ${\eta}<{\lambda}$,
then 
\begin{displaymath}
\bocox{\mb U}{\mb i}{X}=\bigcap_{{\eta}<{\lambda}}(X\setm ({\kappa}\times \{{\eta}\}))=\empt,
\end{displaymath}
so the Seeker wins. 

Assume know that
\begin{displaymath}
{{\zeta}}=\min\{{{\zeta}}':\mb i({{\zeta}}')=0\}
\end{displaymath} 
is defined. 
Then  $\mb U({\zeta})=X\setm ({\kappa}\times \{{\zeta}\})$ and 
\begin{displaymath}
    \bocox{\mb U\restriction {\zeta}\oplus 1}{\mb i\restriction {\zeta}\oplus 1}{X}
    \subset\boou{\mb U({\zeta})}{0}{X}=
 X\setm \mb U({\zeta})=X\cap ({\kappa}\times \{{\zeta}\}).
    \end{displaymath}
If $\mb i({\zeta}\oplus 1)=1$, then 
\begin{displaymath}
    \bocox{\mb U}{\mb i}{X}\subs (X\cap ({\kappa}\times \{{\zeta}\}))\cap\bigcup\{X_{\alpha}:{\alpha}<{\kappa} \land \<{\alpha},{\zeta}\>\in Y\}\subs Y .
    \end{displaymath}
    If $\mb i({\zeta}\oplus 1)=0$, then 
    \begin{displaymath}
        \bocox{\mb U}{\mb i}{X}\subs (X\cap ({\kappa}\times \{{\zeta}\}))\cap\bigcup\{X_{\alpha}:{\alpha}<{\kappa} \land \<{\alpha},{\zeta}\>\notin Y\}\subs X\setm Y .
        \end{displaymath}
So in both cases, the Seeker wins in  the game $\gamexyb{X}{Y}{{\lambda}}$.        
\end{proof}

    \begin{corollary}\label{tm:lambda-ps}
        Assume that ${\kappa}$ is an infinite cardinal.
        Then ${\sm}(D({\kappa})\times \mbb Q)={\omega}$ and 
        ${\sm}({\alpha}(D({\kappa})\times [0,1]))\le 2^{\omega}$.
        \end{corollary}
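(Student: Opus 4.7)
The plan is to derive both statements from Theorem \ref{tm:sumsm}. For the first equality, I would observe that $D(\kappa) \times \mbb Q$ is literally the topological sum $\sum_{\alpha<\kappa} \mbb Q_\alpha$ of $\kappa$ copies of $\mbb Q$, each $T_1$ and of cardinality $\omega$; Theorem \ref{tm:sumsm} then immediately gives ${\sm}(D(\kappa) \times \mbb Q) \le \omega$. For the reverse inequality, I would invoke Theorem \ref{tm:resolvable}, using that each summand $\mbb Q$ is resolvable (split the rationals into those with even and odd numerator in lowest terms) and that a topological sum of resolvable spaces is resolvable.

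For the second inequality, write $X = D(\kappa) \times [0,1]$ and $X^+ = \alpha(X) = X \cup \{\infty\}$, and fix $Y \subs X^+$. Since the winning condition ``$\mb W \subs Y$ or $\mb W \cap Y = \empt$'' is preserved under $Y \leftrightarrow X^+ \setm Y$, I may assume $\infty \notin Y$. I then replay the Seeker's strategy from the proof of Theorem \ref{tm:sumsm}: enumerate $[0,1] = \{t_\eta : \eta < 2^\omega\}$ and, as long as every previous response is $1$, play $\mb U(\eta) = X \setm (\kappa \times \{t_\eta\})$ at stage $\eta$ (open in $X^+$ because $[0,1]$ is $T_1$); at the first stage $\zeta$ at which the Hider answers $0$, play the disambiguating open set $\tilde U = \bigcup\{\{\alpha\}\times[0,1] : (\alpha, t_\zeta) \in Y\}$ at stage $\zeta + 1$.

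The main obstacle, and the only deviation from the sum case, is that every set played lies in $X$, so a Hider who has secretly chosen $p = \infty$ must respond $0$ at every stage---a branch the sum-proof does not address. The normalization $\infty \notin Y$ is precisely what saves the argument: in this branch $\zeta = 0$, and a short computation shows that the state after two moves equals $\{\infty\} \cup ((\kappa \times \{t_0\}) \setm Y)$, which is disjoint from $Y$, so the Seeker still wins. For $p \in X$, any $1$-response forces $\mb W \subs \mb U(\eta) \subs X$, ejecting $\infty$ from the state, after which the analysis from Theorem \ref{tm:sumsm} applies verbatim. Since $2^\omega$ is a limit ordinal, the decisive stage $\zeta + 1 < 2^\omega$ in every branch, yielding $\sm(\alpha(X)) \le 2^\omega$.
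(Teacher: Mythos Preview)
Your proposal is correct and follows the approach the paper implicitly intends: the corollary is stated immediately after Theorem~\ref{tm:sumsm} with no separate proof, so the first part is a direct application (together with Theorem~\ref{tm:resolvable} for the lower bound), and the second part requires exactly the kind of minor adaptation of the proof of Theorem~\ref{tm:sumsm} that you carry out to absorb the extra point~$\infty$. Your symmetry reduction to $\infty\notin Y$ and the verification that the $\zeta=0$ branch still yields $\mb W\cap Y=\emptyset$ are the right additional observations; the only cosmetic point is that the ``secretly chosen $p=\infty$'' language is informal---in the formal game you are simply covering the branch where $\mb i(0)=0$---but your case analysis is complete regardless.
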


        \medskip

\begin{restatable}{theorem}{tmsmlimit}\label{tm:sm-limit}
    ${\sm}(\sum_{{\alpha}\in I}X_{\alpha})\le 
    \sup_{{\alpha}\in I}{\sm}(X_{\alpha})\oplus1$.
  \end{restatable}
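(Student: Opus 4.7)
The plan is to run, inside each summand, a winning Seeker strategy witnessing $\sm(X_{\alpha})$ in parallel for ${\mu}:=\sup_{{\alpha}\in I}\sm(X_{\alpha})$ steps, and then spend exactly one additional move at position ${\mu}$ to reconcile the per-summand outcomes into a global membership decision for $Y$.

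Fix $Y\subs X:=\sum_{{\alpha}\in I}X_{\alpha}$. For each ${\alpha}\in I$ let ${\sigma}_{\alpha}$ be a winning Seeker strategy in $\gamexyb{X_{\alpha}}{Y\cap X_{\alpha}}{\sm(X_{\alpha})}$, extended arbitrarily (say by playing $\empt$) so that it is defined on all of $2^{<{\mu}}$. At stage $\zeta<{\mu}$ of $\gamexyb{X}{Y}{{\mu}\oplus 1}$, I would have the Seeker play
\begin{displaymath}
\mathbf{U}(\zeta)=\bigcup_{{\alpha}\in I}{\sigma}_{\alpha}(\mathbf{i}\restriction \zeta),
\end{displaymath}
which is open in $X$ because each summand is clopen and the union decomposes into pairwise disjoint open pieces ${\sigma}_{\alpha}(\mathbf{i}\restriction \zeta)\subs X_{\alpha}$. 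The crucial observation is that if the Hider's secret point $p$ lies in some $X_{{\alpha}^*}$, then $p\in\mathbf{U}(\zeta)$ iff $p\in{\sigma}_{{\alpha}^*}(\mathbf{i}\restriction \zeta)$, so $\mathbf{i}\restriction {\mu}$ is precisely the sequence of Hider responses in a run of ${\sigma}_{{\alpha}^*}$ against $p$ inside $X_{{\alpha}^*}$ (with harmless moves after stage $\sm(X_{{\alpha}^*})$). Since ${\sigma}_{{\alpha}^*}$ wins that inner game and the state shrinks monotonically, the intermediate state $\mathbf{W}({\mu})$ in the $X$-game satisfies, for every ${\alpha}\in I$,
\begin{displaymath}
\mathbf{W}({\mu})\cap X_{\alpha}\subs Y\quad\text{or}\quad \mathbf{W}({\mu})\cap X_{\alpha}\cap Y=\empt.
\end{displaymath}

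This per-summand dichotomy is weaker than the desired global winning condition because $\mathbf{W}({\mu})$ may meet several summands at once. The extra ``$\oplus 1$'' move closes exactly that gap: at stage ${\mu}$, from the recorded history the Seeker computes $A:=\{{\alpha}\in I:\mathbf{W}({\mu})\cap X_{\alpha}\subs Y\}$ and plays $\mathbf{U}({\mu})=\bigcup_{{\alpha}\in A}X_{\alpha}$, which is open in the sum. If the Hider answers $\mathbf{i}({\mu})=1$, the outcome is contained in $\bigcup_{{\alpha}\in A}(\mathbf{W}({\mu})\cap X_{\alpha})\subs Y$; if they answer $\mathbf{i}({\mu})=0$, the outcome is contained in $\bigcup_{{\alpha}\notin A}(\mathbf{W}({\mu})\cap X_{\alpha})$, which is disjoint from $Y$ by the dichotomy above. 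Either way the Seeker wins $\gamexyb{X}{Y}{{\mu}\oplus 1}$.

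The proof is otherwise entirely routine; the step I would be most careful with is the \emph{faithful simulation} claim — that the $X$-game's Hider responses coincide with those of the relevant $X_{{\alpha}^*}$-subgame — which uses crucially the pairwise disjointness of the summands and the fact that extending each ${\sigma}_{\alpha}$ past its native length $\sm(X_{\alpha})$ cannot damage its winning property by monotonicity of the state. Up to this bookkeeping, the ``$\oplus 1$'' in the bound is precisely the cost of the single global move needed to upgrade the per-summand dichotomy to a dichotomy on all of $X$.
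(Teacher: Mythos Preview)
Your proof is correct and follows the paper's approach exactly: run the per-summand winning strategies $\sigma_\alpha$ in parallel by playing $\mathbf{U}(\zeta)=\bigcup_{\alpha}\sigma_\alpha(\mathbf{i}\restriction\zeta)$ for $\zeta<\mu=\sup_\alpha\sm(X_\alpha)$, obtain the per-summand dichotomy on $\mathbf{W}(\mu)\cap X_\alpha$, and then spend the single extra move on $\bigcup\{X_\alpha:\mathbf{W}(\mu)\cap X_\alpha\subseteq Y\}$. The only difference is that the paper argues directly in the formal game without a secret point: since the \emph{same} response sequence $\mathbf{i}$ is fed to every $\sigma_\alpha$, the restriction $\mathbf{W}(\mu)\cap X_\alpha$ is literally the outcome of a play in $\gamexyb{X_\alpha}{Y\cap X_\alpha}{\mu}$ with Seeker following $\sigma_\alpha$, which immediately gives the dichotomy for \emph{every} $\alpha$ --- your detour through a hypothetical $p\in X_{\alpha^*}$ justifies only the $\alpha^*$ case and is unnecessary.
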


\begin{proof} Write $X=\sum_{{\alpha}\in I}X_{\alpha}$.
Let $Y\subs X$. 
Write ${\delta}=\sup_{{\alpha}\in I}{\sm}(X_{\alpha})$.
Fix
 a winning strategy    ${\sigma}_{\alpha}$  of the Seeker in the game 
$\gamexyb{X_{\alpha}}{X_{\alpha}\cap Y}{{\delta}}$ for ${\alpha}\in I$.
Define the strategy ${\sigma}$ for the Seeker in 
$\gamexyb{X}{X\cap Y}{{\delta}\oplus 1}$ as follows. For ${\xi}<{\delta}$,
\begin{displaymath}
{\sigma}(\mathbf i\restriction {\xi})=\bigcup_{{\alpha}\in I}{\sigma}_{\alpha}(\mathbf i\restriction {\xi}).
\end{displaymath}
Then, for each ${\alpha}\in I$,
\begin{displaymath}
    \bocox{\mathbf U\restriction {\delta}}{\mathbf i\restriction {\delta}}X\cap X_{\alpha}\subs Y \text{ or }
    \bocox{\mathbf U\restriction {\delta}}{\mathbf i\restriction {\delta}}X\cap X_{\alpha}\cap  Y =\empt.
\end{displaymath}
In the ${\delta}$th step the Seeker asks:
 $$U({\delta})=\bigcup\{X_{\alpha}:\bocox{\mathbf U\restriction {\delta}}{\mathbf i\restriction {\delta}}X\cap X_{\alpha}\subs Y\}.$$
 If $\mathbf B({\delta})=1$, then $\bocox{\mathbf U}{\mathbf i}{X}\subs Y$.
 If $\mathbf B({\delta})=0$, then $\bocox{\mathbf U}{\mathbf i}X\cap Y=\empt$.
    \end{proof}

\subsection*{Topological Product}

In \cite{Scott86}*{Theorem 1.0}, Scott proved the following:  
\emph{If $\psn(X_{\alpha}) \le \kappa$ for
${\alpha} < \kappa$, then 
$\psn\!\left(\prod_{{\alpha}<\kappa}X_{\alpha}\right)\le \kappa$.}

Concerning $\ps$ we can obtain a slightly stronger result.

\begin{theorem}\label{pro:ps-prod2}
 If $\<X_{\alpha}:{\alpha}<{\kappa}\>$ are $T_0$-spaces, 
$$\ps\!\Big(\prod_{{\alpha}<\kappa}X_{\alpha}\Big)=
\sup\{\ps(X_{\alpha}) : {\alpha}<\kappa\}$$
provided $\sup\{\ps(X_{\alpha}) : {\alpha}<\kappa\}$ 
is  ${\kappa}$-decomposable.
    \end{theorem}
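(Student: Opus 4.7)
The plan is to establish both inequalities. Let $\delta = \sup\{\ps(X_\alpha) : \alpha < \kappa\}$ and $X = \prod_{\alpha<\kappa} X_\alpha$; the case of an empty factor is trivial, so I assume every $X_\alpha$ is nonempty.

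First, for the lower bound $\delta \le \ps(X)$, I would record the general monotonicity principle that $\ps$ is non-increasing on subspaces: if $Y \subseteq Z$ and $\sigma$ is a Seeker-winning strategy in $\gamexb{Z}{\beta}$, then $\sigma'(\mathbf i \restriction \xi) := \sigma(\mathbf i \restriction \xi) \cap Y$ is Seeker-winning in $\gamexb{Y}{\beta}$, because the state of the $Y$-game is exactly the trace on $Y$ of the state of the $Z$-game. Applying this to the canonical slice $Y_\alpha := \{f \in X : f(\beta) = q(\beta) \text{ for } \beta \ne \alpha\} \cong X_\alpha$ (for any fixed $q \in \prod_{\beta \ne \alpha} X_\beta$) yields $\ps(X_\alpha) \le \ps(X)$ for every $\alpha$, hence $\delta \le \ps(X)$.

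For the upper bound $\ps(X) \le \delta$ I would mimic the parallel-games construction already used in Theorems~\ref{tm:pssmlog} and \ref{tm:pssumgen}. By $\kappa$-decomposability, partition $\delta = \bigsqcup_{\alpha < \kappa} I_\alpha$ with each $I_\alpha$ of order type $\delta$, and for each $\alpha$ fix a Seeker-winning strategy $\sigma_\alpha$ in $\gamexb{X_\alpha}{\delta}$ (obtained by padding a winning strategy for $\gamexb{X_\alpha}{\ps(X_\alpha)}$ with arbitrary moves; padding cannot hurt since the state only shrinks). Writing $\alpha_\zeta$ for the unique $\alpha$ with $\zeta \in I_\alpha$, and $\phi_\alpha : \delta \to I_\alpha$ for the order isomorphism, define the Seeker's strategy $\sigma$ in $\gamexb{X}{\delta}$ by
$$\sigma(\mathbf i \restriction \zeta) = \pi_{\alpha_\zeta}^{-1}\bigl(\sigma_{\alpha_\zeta}(\mathbf i \circ \phi_{\alpha_\zeta} \restriction \operatorname{tp}(I_{\alpha_\zeta} \cap \zeta))\bigr).$$

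To verify $\sigma$ wins, I would use that $X \setminus \pi_\alpha^{-1}(U) = \pi_\alpha^{-1}(X_\alpha \setminus U)$, so the restriction of the play to the coordinates in $I_\alpha$ is, after reindexing via $\phi_\alpha$, a genuine play of $\gamexb{X_\alpha}{\delta}$ in which the Seeker follows $\sigma_\alpha$; its outcome $W_\alpha \subseteq X_\alpha$ therefore satisfies $|W_\alpha| \le 1$. Since $\mathbf W = \bigcap_{\alpha<\kappa} \pi_\alpha^{-1}(W_\alpha)$, either some $W_\alpha$ is empty (and then $\mathbf W = \emptyset$) or every $W_\alpha = \{p_\alpha\}$ (and then $\mathbf W = \{(p_\alpha)_{\alpha<\kappa}\}$); in either case $|\mathbf W| \le 1$, so the Seeker wins. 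The only real obstacle is the bookkeeping involved in translating between the indexing by $\zeta \in \delta$ and the indexing inside $\sigma_{\alpha_\zeta}$ by $\operatorname{tp}(I_{\alpha_\zeta} \cap \zeta)$, but this is precisely the same combinatorial device used in the cited theorems.
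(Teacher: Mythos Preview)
Your argument is correct and follows essentially the same parallel-games construction as the paper: partition $\delta$ into $\kappa$ pieces of order type $\delta$, run a winning strategy for $\gamexb{X_\alpha}{\delta}$ on the coordinates in $I_\alpha$ via the subbasic open preimages $\pi_{\alpha}^{-1}(\cdot)$, and observe that each coordinate of the outcome is pinned down. The only notable difference is that you explicitly supply the lower bound $\ps(X_\alpha)\le\ps(X)$ via subspace monotonicity, a step the paper's proof leaves implicit.
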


\begin{proof} Write $X=\prod_{{\alpha}<\kappa}X_{\alpha}$ and  ${\delta}=\sup\{\ps(X_{\alpha}) : {\alpha}<\kappa\}$.
Let $\{I_{\alpha}:{\alpha}<\kappa\}$ be a partition of $\delta$ such that 
$\operatorname{tp}(I_{\alpha})=\delta$ for each $\alpha$.  
For each $\zeta\in\delta$ let $\alpha_{\zeta}<\kappa$ be such that 
$\zeta\in I_{\alpha_{\zeta}}$.

Informally, the Seeker’s strategy in the game $\gamexb{X}{{\delta}}$ is to play $\kappa$ parallel games, using the indices in $I_{\alpha}$
to determine the $\alpha$th coordinate of the  point selected by the Hider.

More formally, for each $\alpha<\kappa$ let $\sigma_{\alpha}$ be a winning strategy of 
the Seeker in the game $\gamexb{X_{\alpha}}{\delta}$, and
define the strategy $\sigma$ as follows.
For ${\zeta}\in {\delta}$ let 
\begin{displaymath}
  \mb H({\zeta})={\sigma}_{{\alpha}_{\zeta}}(\mb i\restriction (I_{{\alpha}_{\zeta}}\cap {\zeta})),
    \end{displaymath}  
and     
\begin{displaymath}
    \mb U({\zeta})=  {\sigma}(\mb i\restriction {\zeta})=\{x\in X: x({\alpha}_{\zeta})\in \mb H({\zeta})\}.
\end{displaymath}

Then for each ${\alpha}<{\kappa}$ and ${\zeta}\in I_{\alpha}$, we have $\mb H({\zeta})\in {\tau}^+_{X_{\alpha}}$.
Moreover, the pair $\<\mb H\restriction I_{\alpha},\mb i\restriction I_{\alpha}\>$ codes a play in 
$\gamexb{X_{\alpha}}{{\delta}}$ in which the Seeker followed the strategy ${\sigma}_{\alpha}$.

Hence, $\mb W^{\alpha}=\bocox{\mb H\restriction I_{\alpha}}{\mb i\restriction I_{\alpha}}{{\delta}}$
has at most one element. 

Hence, $|\{x({\alpha}):x\in \mb W\}|\le 1$ for each ${\alpha}<{\kappa}$,
where $\mb W=\bocox{\mb U}{\mb i}{{\delta}}$ is the outcome of the game 
$\gamexb{X}{{\delta}}$.
Hence, $|\mb W|\le 1$, so the Seeker wins which completes the proof of the theorem.
\end{proof}

We finish this section  with an easy observation.

\begin{observation}\label{obs:easy}
$\ps(\prod_{{\alpha}<{\kappa}}(X_{\alpha}))\le  \sum_{{\alpha}<{\kappa}}\ps(X_{\alpha}).$ 
\end{observation}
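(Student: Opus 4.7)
The plan is to compose the factor strategies sequentially, block by block. Write $X=\prod_{\alpha<\kappa}X_{\alpha}$ and let $\delta=\sum_{\alpha<\kappa}\ps(X_{\alpha})$ denote the ordinal sum. By the definition of ordinal sum, $\delta$ admits a canonical partition into consecutive intervals $\{I_{\alpha}:\alpha<\kappa\}$, arranged in the order of $\alpha$, with $\operatorname{tp}(I_{\alpha})=\ps(X_{\alpha})$. For each $\alpha<\kappa$ fix a winning strategy $\sigma_{\alpha}$ of the Seeker in $\gamexb{X_{\alpha}}{\ps(X_{\alpha})}$, and denote by $\pi_{\alpha}:X\to X_{\alpha}$ the projection onto the $\alpha$th coordinate.

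The Seeker's strategy $\sigma$ in $\gamexb{X}{\delta}$ will be defined as follows. At stage $\zeta<\delta$, let $\alpha<\kappa$ be the unique index with $\zeta\in I_{\alpha}$, and set
\begin{displaymath}
\sigma(\mathbf{i}\restriction\zeta)=\pi_{\alpha}^{-1}\bigl(\sigma_{\alpha}(\mathbf{i}\restriction(I_{\alpha}\cap\zeta))\bigr),
\end{displaymath}
which is open in $X$ as the preimage of an open set under a continuous map. Informally, during the $\alpha$th block the Seeker plays the moves dictated by $\sigma_{\alpha}$, pulled back from $X_{\alpha}$ to the product.

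To verify that $\sigma$ wins, I would fix any response sequence $\mathbf{i}$ and let $\mathbf{W}=\bocox{\mathbf{U}}{\mathbf{i}}{X}$ be the outcome. Using the equivalence $p\in\pi_{\alpha}^{-1}(V)\Leftrightarrow p(\alpha)\in V$, the pair $\langle\mathbf{U}\restriction I_{\alpha},\mathbf{i}\restriction I_{\alpha}\rangle$ codes a legal play of $\gamexb{X_{\alpha}}{\ps(X_{\alpha})}$ in which the Seeker follows $\sigma_{\alpha}$. Since $\sigma_{\alpha}$ is winning, the corresponding outcome $\mathbf{W}^{\alpha}\subseteq X_{\alpha}$ has $|\mathbf{W}^{\alpha}|\le 1$, and every $p\in\mathbf{W}$ satisfies $p(\alpha)\in\mathbf{W}^{\alpha}$. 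Hence every coordinate of any point in $\mathbf{W}$ is uniquely determined, so $|\mathbf{W}|\le 1$ and $\sigma$ is winning.

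The proof is essentially a sequential composition of the factor strategies, so no serious obstacle is expected. The only point worth noting is that the required length is the \emph{ordinal} sum rather than the Hessenberg sum, which is precisely what lets us arrange the blocks consecutively in the order prescribed by $\alpha$, instead of needing them interleaved in the style of Theorem~\ref{pro:ps-prod2}.
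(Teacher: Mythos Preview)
Your argument is correct and follows exactly the same approach as the paper: partition $\delta=\sum_{\alpha<\kappa}\ps(X_\alpha)$ into blocks $I_\alpha$ of order type $\ps(X_\alpha)$ and use the indices in $I_\alpha$ to run the factor strategy $\sigma_\alpha$ on the $\alpha$th coordinate. The paper's proof is a one-sentence sketch that points back to Theorem~\ref{pro:ps-prod2}; you have simply written out the details.
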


Indeed, we can imitate the proof of the previous theorem in a simpler setup. 
Namely, partition ${\delta}=\sum_{{\alpha}<{\kappa}}\ps(X_{\alpha})$
into pieces $\{I_{\alpha}:{\alpha}<{\kappa}\}$ such that $\operatorname{tp}(I_{\alpha})=\ps(X_{\alpha})$. Now, 
 the Seeker can use  the steps whose indices in $I_{\alpha}$ to determine the ${\alpha}$th coordinate of the point selected by the Hider  (see the proof of Theorem \ref{pro:ps-prod2}).

\section{$\ps$ and ${\sm}$ for  familiar spaces}\label{sc:familiar}

\begin{theorem}\label{tm:ps=kappa}
    If ${\kappa}$ is a regular cardinal, then 
    \begin{displaymath}
    {\sm}(D(2)^{\kappa})=\ps(D(2)^{\kappa})=\ps(\<D(2)^{\kappa}, {\tau}_{<{\kappa}}\>)=
    {\sm}(\<D(2)^{\kappa}, {\tau}_{<{\kappa}}\>)={\kappa},
    \end{displaymath}
    where ${\tau}_{<{\kappa}}$ denotes the topology generated by the 
    $G_{<{\kappa}}$-sets.  
    \end{theorem}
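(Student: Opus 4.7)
The plan is to prove the upper bound $\ps(D(2)^\kappa)\le\kappa$ in the product topology and the lower bound $\sm(\langle D(2)^\kappa,\tau_{<\kappa}\rangle)\ge\kappa$ in the finer box topology: since $\tau\subseteq\tau_{<\kappa}$ means the Seeker has at least as many open sets available in $\tau_{<\kappa}$, we have $\sm(\langle D(2)^\kappa,\tau_{<\kappa}\rangle)\le\sm(D(2)^\kappa)$ and $\ps(\langle D(2)^\kappa,\tau_{<\kappa}\rangle)\le\ps(D(2)^\kappa)$, while $\sm\le\ps$ always; these two inequalities together pin all four invariants to $\kappa$. The upper bound is immediate: the Seeker asks about one coordinate at a time, $\mathbf{U}(\alpha)=\{x:x(\alpha)=1\}$ at stage $\alpha<\kappa$, and after $\kappa$ moves the state $\{x : x(\alpha)=\mathbf{i}(\alpha)\text{ for all }\alpha<\kappa\}$ is a single point.

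For the lower bound I would first observe that $\tau_{<\kappa}$ on $D(2)^\kappa$ coincides with the $<\kappa$-box topology: distributing intersections over the canonical representation of each product-open as a union of basic clopens shows that every $G_{<\kappa}$-set is a union of sets of the form $[f]:=\{x : f\subseteq x\}$ where $f:S\to 2$ and $S\in[\kappa]^{<\kappa}$, and these $[f]$'s form a base for $\tau_{<\kappa}$. Then, since there are at most $\kappa^{<\kappa}\cdot 2^{<\kappa}\le 2^\kappa$ such $[f]$ while each has cardinality $2^\kappa$ (the complement $\kappa\setminus S$ has size $\kappa$ by regularity), I would construct a Bernstein-style set $Y\subseteq D(2)^\kappa$ by transfinite recursion of length at most $2^\kappa$, picking at each stage two as-yet-unused points from the current $[f]$, one to place into $Y$ and one into $D(2)^\kappa\setminus Y$, so that $[f]\cap Y\ne\emptyset\ne [f]\setminus Y$ for every such $f$.

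Fixing this $Y$ and any $\beta<\kappa$, I would have the Hider preserve the invariant $\mathbf{W}(\alpha)\supseteq[f_\alpha]$ with $|\dom f_\alpha|<\kappa$, starting from $f_0=\emptyset$. At a successor, if $\mathbf{U}(\alpha)\cap[f_\alpha]=\emptyset$ the Hider answers $0$ and keeps $f_{\alpha+1}=f_\alpha$; otherwise, writing $\mathbf{U}(\alpha)=\bigcup_i[g_i]$ in basic opens, some $[g_i]$ meets $[f_\alpha]$, so $f_\alpha$ and $g_i$ are compatible as partial functions, the Hider answers $1$, and $f_{\alpha+1}:=f_\alpha\cup g_i$ still has $|\dom f_{\alpha+1}|<\kappa$. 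At limits $f_\alpha:=\bigcup_{\alpha'<\alpha} f_{\alpha'}$, and regularity of $\kappa$ gives $|\dom f_\alpha|<\kappa$. Thus the outcome $\mathbf{W}\supseteq[f_\beta]$ has size $2^\kappa$ and is split by $Y$ by the Bernstein property, so the Seeker loses; hence $\sm(Y,\langle D(2)^\kappa,\tau_{<\kappa}\rangle)>\beta$ for every $\beta<\kappa$.

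The main obstacle I anticipate is the limit step, which relies essentially on regularity of $\kappa$: only then is a union of $<\kappa$ sets each of size $<\kappa$ guaranteed to have size $<\kappa$. The identification of $\tau_{<\kappa}$ with the $<\kappa$-box topology is the other place where the structure of $D(2)^\kappa$ is used critically, since without it a single Seeker move might depend on unboundedly many coordinates and the Hider's invariant could not be maintained.
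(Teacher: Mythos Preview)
Your proposal is correct and follows essentially the same approach as the paper: both establish the chain of inequalities via the finer topology $\tau_{<\kappa}$, and both prove the lower bound by having the Hider maintain a basic $<\kappa$-box open set inside the state, using regularity of $\kappa$ at limit stages, so that the final state contains a nonempty basic open meeting both a dense set and its dense complement. The only cosmetic difference is that you construct the witnessing $Y$ explicitly via a Bernstein-style recursion, whereas the paper simply invokes resolvability of $X=\langle D(2)^\kappa,\tau_{<\kappa}\rangle$ (from $\Delta(X)\ge \we(X)$) to obtain a dense $Y$ with dense complement.
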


\begin{proof}
    Since the ${\tau}_{<{\kappa}}$-topology is finer than the product topology,   
    $${\kappa}=\we(D(2)^{\kappa})\ge \ps(D(2)^{\kappa})
    \ge {\sm}(D(2)^{\kappa})\ge {\sm}(\<D(2)^{\kappa}, {\tau}_{<{\kappa}}\>).$$

    To show ${\kappa}\le {\sm}(\<D(2)^{\kappa}, G_{<{\kappa}}\>)$, 
    write $X=\<D(2)^{\kappa}, {\tau}_{<{\kappa}}\>$, and 
    we apply the following argument.

Since $\Delta(X)\ge \we(X)$, the space $X$ is resolvable, that is, there is a dense subset 
$Y\subs X$ such that $X\setm Y$ is also dense. 
        We will show that $\wingamexyb{H}XY{{\beta}}$  for each ${\beta}<{\kappa}$. 

Construct  the winning strategy of the Hider in the game  $\gamexyb{X}{Y}{{\beta}}$
as follows.  
Let $\mc B$ be the family of elementary open subsets of $X$.
Along the game the Hider will choose 
     a decreasing sequence $\<B_{\zeta}: {\zeta}\le{\beta}\>\subs \mc B$ 
      such that 
     \begin{equation*}
     B_{\zeta}\subs 
     \boco{\mathbf U\restriction {\zeta}}{\mathbf i\restriction {\zeta}}.
     \end{equation*}
     First, $B_{0}=X$ works. 
     Assume that we have    $B_{\xi}$ for  ${\xi}<{\zeta}$.

     If ${\zeta}$ is a limit ordinal,     
     we can take  
          $B_{\zeta}= \bigcap\{B_{\xi}:{\xi}<{\zeta}\}\in \mc B$.   

     Assume that ${\zeta}={\xi}\oplus 1$.
     Then the  Seeker chooses 
     $\mathbf U({\xi})\in {\tau}_Z$.

     If $\mathbf U({\xi})\cap B_{\xi}=\empt $, then let 
     $\mathbf i({\xi})=0$ and then 
     $B_{{\zeta}}=B_{\xi}$ works because 
     $$\boco{\mathbf U\restriction {\zeta}}{\mathbf i\restriction {\zeta}}=
     \boco{\mathbf U\restriction {\xi}}{\mathbf i\restriction {\xi}}\setm \mathbf U({\xi})\supset B_{\xi}.$$
     
     If $\mathbf U({\xi})\cap B_{{\xi}}\ne \empt $, then 
      we can choose  $B_{{\zeta}}\in \mc B$ with $B_{{\zeta}}\subs 
      \mathbf U({\xi})\cap B_{{\xi}}$. 

After ${\beta}$ steps, we have  
     \begin{equation*}
     B_{{\beta}}\subs  
     \bocox{\mathbf U}{\mathbf i}X.
     \end{equation*}
Since both $Y$ and $X\setm Y$ are dense, the set $B_{\beta}$ intersects both, 
so the Hider wins the game. 

This completes the proof of the Theorem. 
\end{proof}

    \begin{remark} $\ps (\mathbb S^n)={\omega}$ for each $n\ge 1$,  where 
        $\mathbb S$ denotes the Sorgenfrei line, because  the Sorgenfrei topology is finer 
        than the Euclidean topology.
    \end{remark}

    \begin{theorem}\label{tm:double-arrow-power}
    For any cardinal  ${\mu}>0$,
    \begin{displaymath}
    {\ps(\mathbb A^{\mu})}=\left\{\begin{array}{ll}
    {{\omega}\oplus {\mu}}&\text{if ${\mu}\le {\omega}$},\\
    {{\mu}}&\text{{if ${\mu}>{\omega}$}},
    \end{array}\right.
    \end{displaymath}
    and 
    \begin{displaymath}
        {{\sm}(\mathbb A^{\mu})}=\left\{\begin{array}{ll}
        {{\omega}\oplus {1}}&\text{if ${\mu}< {\omega}$},\\
        {{\omega}\oplus {\omega}}&\text{if ${\mu}= {\omega}$},\\
        {{\mu}}&\text{{if ${\mu}>{\omega}$}}.
        \end{array}\right.
        \end{displaymath}
        \end{theorem}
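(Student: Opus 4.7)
The plan is to treat the three regimes $1\le\mu<\omega$, $\mu=\omega$ and $\mu>\omega$ separately, and for each establish matching upper and lower bounds on both $\ps$ and $\sm$. The structural backbone is the description of $\mathbb A$ as $[0,1]\times\{0,1\}$ in lexicographic order, with clopen basis the half-open intervals $J_{a,b}=[(a,1),(b,0)]$. Two facts drive the analysis: first, the countable subfamily of rational-endpoint $J_{a,b}$'s separates every pair of points with different first coordinates but never separates an adjacent pair $\{(r,0),(r,1)\}$ with $r$ irrational; second, for each $r$ the clopen $[(a,1),(r,0)]$ with $a<r$ separates $(r,0)$ from $(r,1)$. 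Hence the Seeker has an $\omega$-step scheme on $\mathbb A$ narrowing the state of a single-factor game to $\{(r,0),(r,1)\}$ or a singleton, and one further move finishes the $\ps$-game.

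For the upper bounds I will distribute this scheme over the factors. When $\mu\le\omega$, partition the first $\omega$ moves into $|\mu|$ blocks of order type $\omega$ and run the single-factor narrowing on each coordinate; after $\omega$ moves the state lies in the $2^{|\mu|}$-point corner $P=\prod_{\alpha<\mu}\{(r_\alpha,0),(r_\alpha,1)\}$. A further $\mu$ moves, one per factor using the clopen from fact two, settle the bit coordinates and produce $\ps(\mathbb A^\mu)\le\omega\oplus\mu$. For $\sm(\mathbb A^n)\le\omega\oplus 1$ with $n<\omega$, $P$ is a discrete $2^n$-point subspace and the clopen algebra of $\mathbb A^n$ realises every subset of $P$ as $U\cap P$; so a single additional Seeker move with $U\cap P=P\cap Y$ decides the membership. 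When $\mu$ is an uncountable cardinal, the identity $(\omega\oplus 1)\cdot\mu=\mu$ allows partitioning $\mu$ into $\mu$ blocks of order type $\omega\oplus 1$ on each of which a full single-factor $\ps(\mathbb A)$-strategy is executed, giving $\ps(\mathbb A^\mu)\le\mu$ and, via $\sm\le\ps$, the matching bound for $\sm$.

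For the lower bounds, Theorem~\ref{tm:ps2psn}(a) directly yields $\ps(\mathbb A^\mu)\ge\log|\mathbb A^\mu|$, which equals $\mu$ for $\mu>\omega$ and closes that regime for $\ps$. In the small regimes the residual $n$ or $\omega$ is produced by a nested-interval Hider strategy: the Hider maintains a descending chain of products $\prod_i I_\zeta^i$ of non-degenerate open subintervals of $[0,1]$ with the invariant that the pair-product $\prod_i\{(x,0),(x,1):x\in I_\zeta^i\}$ lies in the current state. The key technical ingredient is that an arbitrary open subset of $\mathbb A^n$, restricted to such a pair-product, is uniform in each $\epsilon_i$ outside a countable set of bad first-coordinate values per factor; this follows from writing the open set as a countable union of basic clopen products via Lindel\"ofness of $\mathbb A^n$ and collecting their endpoints. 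Picking $r_i\in\bigcap_\zeta I_\zeta^i$ at stage $\omega$ places the full $2^n$-corner into $\mathbf W(\omega)$; the Hider then preserves at least half of the surviving corner per Seeker move, so $n$ further moves are required, giving $\ps(\mathbb A^n)\ge\omega\oplus n$. For $\sm(\mathbb A^n)\ge\omega\oplus 1$ the target $Y=([0,1]\times\{0\})^n$ intersects every surviving pair-product in both a point and its complement. The case $\mu=\omega$ uses the same strategy with bookkeeping that refines every coordinate cofinally often, yielding $\ps(\mathbb A^\omega)\ge\omega\oplus\omega$; for $\sm(\mathbb A^\omega)\ge\omega\oplus\omega$ one takes a target $Y$ depending on infinitely many coordinates (e.g.\ $Y=\{x\in\mathbb A^\omega:x(n)\in[0,1]\times\{0\}\text{ for cofinitely many }n\}$) and verifies that the Hider can keep both a $Y$-point and a non-$Y$-point alive through $\omega\oplus n$ moves for every finite $n$. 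For $\mu>\omega$ the matching inequality $\sm(\mathbb A^\mu)\ge\mu$ follows from a resolvability-style argument analogous to Theorem~\ref{tm:ps=kappa}, using that $\mathbb A^\mu$ is resolvable with $\Delta(\mathbb A^\mu)\ge\mu$.

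The main obstacle is the nested-interval Hider strategy in the product cases, specifically the lemma that an arbitrary open subset of $\mathbb A^n$, restricted to an adjacent-pair-product, is uniform in each $\epsilon_i$ outside a countable bad set of first coordinates per factor; this requires careful use of Lindel\"ofness so that uncountable open sets can still be controlled endpoint-wise. A secondary subtlety is the choice of target $Y$ for the lower bound $\sm(\mathbb A^\omega)\ge\omega\oplus\omega$: the defining condition must depend on infinitely many coordinates in a way compatible with the Hider's adaptive refinement of intervals, so that no Seeker strategy of length $<\omega\oplus\omega$ settles the membership.
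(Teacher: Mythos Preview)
Your architecture matches the paper's, but there is a genuine error in the uncountable regime. The assertion that $\log|\mathbb A^\mu|=\mu$ for $\mu>\omega$ is not provable in ZFC: since $|\mathbb A^\mu|=2^\mu$, we have $\log|\mathbb A^\mu|=\min\{\lambda:2^\lambda\ge 2^\mu\}$, and it is consistent (for instance when $2^{\aleph_0}=2^{\aleph_1}$) that this minimum is strictly below $\mu$. Thus Theorem~\ref{tm:ps2psn}(a) does \emph{not} ``close that regime for $\ps$''. The only route to $\ps(\mathbb A^\mu)\ge\mu$ is through $\sm(\mathbb A^\mu)\ge\mu$, and here your appeal to Theorem~\ref{tm:ps=kappa} is also off target: that proof uses a base closed under ${<}\kappa$ intersections, a feature of the $\tau_{<\kappa}$ topology that the Tychonoff topology on $\mathbb A^\mu$ lacks. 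What actually works (and what the paper does) is to have the Hider maintain a nonempty box-open tube $[f_\alpha]=\{x:x(\gamma)\in f_\alpha(\gamma)\text{ for }\gamma\in\operatorname{dom}(f_\alpha)\}$ with $|\operatorname{dom}(f_\alpha)|\le|\alpha|+\omega<\mu$; each Seeker move enlarges the support by only finitely many coordinates, and at limits the union of supports stays below $\mu$. The target $Y$ and its complement must then meet every such tube, so mere density (``resolvable with $\Delta\ge\mu$'') is insufficient---one needs both halves to be $G_{<\mu}$-dense.

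A smaller point: for the finite-power lower bounds your Lindel\"of uniformity lemma is correct but heavier than necessary. The paper instead uses the one-line observation that every nonempty open $U\subseteq\mathbb A^n$ contains a set $V\times 2^n$ with $\emptyset\ne V\in\tau_{[0,1]^n}$; this already lets the Hider maintain a shrinking $V_\alpha\times 2^n$ inside the state (with $\overline{V_{\alpha+1}}\subset V_\alpha$ and diameters $\to 0$) without tracking endpoints or invoking hereditary Lindel\"ofness.
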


    \begin{proof}
We will identify the elements of $\mbb A^{\mu}$ with the elements of 
$\mbbI^{\mu}\times 2^{\mu}$.
        We start with the following straightforward observation.
    
    \begin{obs}\label{obs:1}
       For each  $\empt\ne U\in{\tau}_{\mbb A^{\mu}}$, 
       there is non-empty $V_U\in {\tau}_{\mbbI^{\mu}}$ such that 
       $V_U\times 2^{\mu}\subs U$.
        \end{obs}

    \begin{lemma}\label{lm:s-n}
    For each cardinal  $1\le {\mu}\le {\omega}$, 
    we have $\wingamexb{S}{\mathbb A^{\mu}}{{\omega}\oplus {\mu}}$.
    \end{lemma}
    
    \begin{proof}
        Let $\mc B=\{B_k:k<{\omega}\}$ be a base of $\mbbI^{\mu}$.
        In the $k$th turn the Seeker asks $\mb U(k)=B_k\times {}^{\mu}2\in {\tau}_{\mathbb A^n}$.
        
        Then $Y=\bocox{\mathbf U\restriction {\omega}}{\mathbf i\restriction {\omega}}{\mathbb A^{\mu}}$ is either empty (and so the Seeker wins), or 
        $Y=\{x\}\times {}^{\mu}2$ for some $x\in \mbbI^{\mu}$, because $\mc B$ is a point separating family in $\mbbI^{\mu}$.
        
        If ${\mu}$ is finite, then 
        $|Y|=2^{\mu}$, and so ${\mu}$ additional steps is enough  for the Seeker to win the game.
        
        If ${\mu}={\omega}$, then 
        $Y$ is homeomorphic to $2^{\omega}$, and so ${\omega}$ additional steps is enough  for the Seeker to win the game.
    \end{proof}

    \begin{lemma}\label{lm:h-n}
        For each natural number $n\ge 1$, 
        we have $\wingamexb{H}{\mathbb A^n}{{\omega}\oplus (n-1)}$.
        \end{lemma}
    
      \begin{proof}
    Using the Observation, the Hider can pick  $i({\alpha})<2$ and non-empty $V_{\alpha}\in {\tau}_{\mbbI^n}$
    for ${\alpha}<   {\omega}$
    such that  $V_{\alpha}\times 2^n\subs \bocox{\mathbf U\restriction {\alpha}+1}{\mathbf i\restriction {\alpha}+1}{\mathbb A^n}$.
    We can also assume that $\overline {V_{{\alpha}+1}}\subs V_{\alpha}$ and 
    diameter of $V_{\alpha}$ is at most $2^{-{\alpha}}$. So 
    there is $x\in \bigcap_{{\alpha}\in {\omega}} V_{\alpha}\subs \mbbI^n$.
    Thus, $\bocox{\mathbf U\restriction {\omega}}{\mathbf i\restriction{\omega}}{\mathbb A^n}$ has 
    at least $2^n$ elements. 
    Thus, the Hider can win in $n-1$ steps: in the ${\omega}\oplus i$th step  the Hider can guarantee that 
    $\bocox{\mb U \restriction {\omega}\oplus i}{\mb I\restriction {\omega}\oplus i}{\mb A^n}$
contains at least $2^{n-i}$ elements.
      \end{proof}

    Lemmas \ref{lm:s-n} and \ref{lm:h-n} prove that 
    $\ps (\mbb A^{\mu})={\omega}\oplus{\mu}$ for $1\le{\mu} \le {\omega}$.
    
    \begin{lemma}\label{lm:s-n-sm}
        For natural number  $n\ge 1$ and $Y\subs \mbb A^n$, 
        we have $\wingamexyb{S}{\mathbb A^n}{Y}{{\omega}\oplus 1}$.
        \end{lemma}
        
    \begin{proof}
        let $\mc B=\{B_k:k<{\omega}\}$ be a base of $\mathbb R^n$.
        In the $k$th turn the Seeker asks $U_k=V_k\times {}^n2\in {\tau}_{\mathbb A^n}$.
        Then $Z=\bocox{\mathbf U\restriction {\omega}}{\mathbf i\restriction {\omega}}{\mathbb A^n}$ is either empty (and so the Seeker wins), or 
        $Z=\{x\}\times {}^n2$, because $\mc B$ is a point separating family in $\mathbb R^n$.
        Since $|Z|=2^n$, one  additional steps is enough  for the Seeker to win the game: choose an open set $\mb U({\omega})$ such that 
        $\mb U({\omega})\cap Z=Y\cap Z$.

    \end{proof}

    \begin{lemma}\label{lm:bern}
        If $Y=\mbbI\times \{0\}$, then         $\wingamexyb{H}{\mathbb A}{Y}{{\omega}}$.
        \end{lemma}

\begin{proof}
    Using the Observation, the Hider can pick  $i({\alpha})<2$ and non-empty $V_{\alpha}\in {\tau}_{\mbbI^n}$
    for ${\alpha}<   {\omega}$
    such that  $V_{\alpha}\times 2^n\subs \bocox{\mathbf U\restriction {\alpha}+1}{\mathbf i\restriction {\alpha}+1}{\mathbb A^n}$.
    We can also assume that $\overline {V_{{\alpha}+1}}\subs V_{\alpha}$ and 
    diameter of $V_{\alpha}$ is at most $2^{-{\alpha}}$. So 
    there is exactly one $x\in \bigcap_{{\alpha}\in {\omega}} V_{\alpha}\in \mbbI$.
    Thus, $\bocox{\mathbf U\restriction {\omega}}{\mathbf i\restriction{\omega}}{\mathbb A}$ has 
    exactly two elements, $\<x,0\>$ and $\<x,1\>$. 
    Thus, the Hider wins.
\end{proof}

Lemmas \ref{lm:s-n-sm} and \ref{lm:bern} imply that
$\sm(\mbb A^n)={\omega}\oplus 1$ for $1\le n<{\omega}$.

\begin{lemma}\label{lm:aomegasm}
    Fix $Y\subs \mbb A^{\omega}$ such that 
for each $x\in [0,1]^{\omega}$ 
and for each $\varepsilon\in \operatorname{Fn}({\omega},2)$
there are $a,b\in 2^{\omega}$ such that 
$\<x,a\>\in Y\cap [\varepsilon]$ and 
$\<x,b\>\in [\varepsilon]\setm Y$.  
Then   $\wingamexyb{H}{\mathbb A^{\omega}}{Y}{{\omega\oplus n}}$
for each $n<{\omega}$.
    \end{lemma}

\begin{proof}
    Using the Observation, the Hider can pick  $i({\alpha})<2$ and non-empty $V_{\alpha}\in {\tau}_{\mbbI^n}$
    for ${\alpha}<   {\omega}$
    such that  $V_{\alpha}\times 2^{\omega}\subs \bocox{\mathbf U\restriction {\alpha}+1}{\mathbf i\restriction {\alpha}+1}{\mathbb A^n}$.
    We can also assume that $\overline {V_{{\alpha}+1}}\subs V_{\alpha}$ and 
    diameter of $V_{\alpha}$ is at most $2^{-{\alpha}}$. So 
    there is exactly one $x\in \bigcap_{{\alpha}\in {\omega}} V_{\alpha}\in \mbbI$.

    Thus, $\mathbb W_{\omega}=\bocox{\mathbf U\restriction {\omega}}{\mathbf i\restriction{\omega}}{\mathbb A}$ is homeomorphic to $2^{\omega}$. 
Moreover, both $Y$ and $\mathbb W_{\omega}\setm Y$ are dense 
in $\mathbb W_{\omega}$.

    The Hider can play $n$ more steps in such a way that 
    the interior of the outcome  $\mathbb W$ of the game is nonempty in $\mathbb W_{\omega}$.
    Since both $Y$ and $\mathbb W_{\omega}\setm Y$ are dense
    in $\mathbb W_{\omega}$, neither $\mathbb W\subseteq Y$ 
    nor $\mathbb W\cap Y=\varnothing$. 
    Hence, the Hider wins the game.
\end{proof}

By Lemma \ref{lm:aomegasm}, $\sm(\mbb A^{\omega})\ge {\omega}\oplus {\omega}$.
Since $\sm(A^{\omega})\le \ps (\mbb A^{\omega})={\omega}\oplus {\omega}$.
So we proved $\sm(\mbb A^{\omega})={\omega}\oplus{\omega}$.

\begin{lemma}\label{lm:kalaom}
    If ${\kappa}>{\omega}$ is a cardinal, then 
    $\ps(\mathbb A^{\kappa})={\sm}(\mathbb A^{\kappa})={\kappa}$.
\end{lemma}

\begin{proof}  
    The Hider  
    can play such a way that for each ${\alpha}<{\kappa}$ there is a 
    function $f_{\alpha}$ with $\dom(f_{\alpha})\in {[{\kappa}]}^{|{\alpha}|+{\omega}}$
    and $\ran(f_{\alpha})\subs {\tau}_{\mbb A}$
    such that 
    \begin{displaymath}
        [f_{\alpha}]\subs 
    \bocox{\mathbf U\restriction     {\alpha}}{\mathbf i\restriction{\alpha}}{X}.
    \end{displaymath}
    
    Hence,  if $\{Y,Z\}$ is a partition of $\mathbb A^{\kappa}$ into  
    $G_{<\kappa}$-dense sets,
    then ${\sm}(Y,\mathbb A^{\kappa})\ge {\kappa}$.
    
    On the other hand,  ${\kappa}$ has a partition into ${\kappa}$ many sets all of them have order type 
    ${\omega}\oplus1$, so $\ps(\mathbb A^{\kappa})\le {\kappa}$. 
    \end{proof}
  This completes the proof of the Theorem \ref{tm:double-arrow-power}. 
    \end{proof}

    In \cite{Scott86}*{Proposition 2.5} Scott proved the following:
    {\em If ${\kappa}$ is a regular cardinal, and $X\subs {\kappa}$ is stationary, then 
    $\psn(X)={\kappa}$.  }
    We prove an analogue result for  $\ps$ and  ${\sm}$:
    \begin{theorem}\label{pr:stat}
        If ${\kappa}$ is a regular cardinal and  $X\subs {\kappa}$ is a stationary set,  then 
        $\ps(X)={\sm}(X)={\kappa}$. 
    \end{theorem}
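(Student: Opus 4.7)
The plan is to split the work into a (short) upper bound and a (substantive) lower bound. The upper bound $\ps(X),\sm(X)\le\kappa$ follows from $\sm(X)\le\ps(X)\le\psn(X)$ together with Scott's equality $\psn(X)=\kappa$ quoted immediately above the theorem. All of the content lies in showing $\sm(X)\ge\kappa$, and I will work in the interesting case $\kappa\ge\omega_1$ (the hypothesis that $X$ is \emph{stationary} is truly meaningful only then).

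By Solovay's theorem, decompose $X=Y\sqcup Z$ with both $Y$ and $Z$ stationary in $\kappa$. Fix $\beta<\kappa$; I will exhibit a winning strategy for the Hider in $\gamexyb{X}{Y}{\beta}$, which yields $\sm(Y,X)>\beta$ and hence $\sm(X)\ge\kappa$. The engine of the strategy is the following dichotomy for any open $V\subseteq\kappa$ (in the order topology): either $V$ contains a club of $\kappa$, or $\kappa\setminus V$ is itself a club. Indeed, $\kappa\setminus V$ is closed; if it is bounded by some $\gamma<\kappa$, then $(\gamma,\kappa)\subseteq V$ is a club, and otherwise $\kappa\setminus V$ is closed and unbounded, i.e.\ a club.

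The Hider maintains the invariant $\mathbf W(\alpha)\supseteq X\cap C_\alpha$ for a club $C_\alpha\subseteq\kappa$, starting with $C_0=\kappa$. At a successor stage, the Seeker plays an open set of the form $\mathbf U(\alpha)=X\cap V$ with $V$ open in $\kappa$; the dichotomy lets the Hider either choose $\mathbf i(\alpha)=1$ together with a club $D\subseteq V$ and set $C_{\alpha+1}=C_\alpha\cap D$, or choose $\mathbf i(\alpha)=0$ and set $C_{\alpha+1}=C_\alpha\cap(\kappa\setminus V)$. At a limit stage $\alpha<\kappa$, set $C_\alpha=\bigcap_{\xi<\alpha}C_\xi$; this is an intersection of fewer than $\kappa$ clubs and, because $\kappa$ is regular and uncountable, is again a club. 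After $\beta$ steps $C_\beta$ is a club of $\kappa$, so it meets both stationary sets $Y$ and $Z$, giving $\mathbf W\cap Y\neq\emptyset$ and $\mathbf W\cap(X\setminus Y)\neq\emptyset$. Thus the Hider wins, as desired.

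I do not expect any real obstacle. The only spot that needs a moment's thought is the open-set dichotomy, which is exactly where the regularity and uncountability of $\kappa$ enter: the same hypothesis then automatically secures the limit step via the $<\kappa$-closure of the club filter. Everything else is bookkeeping: verifying that the invariant genuinely survives the limit step (it does, since the intersection over $\xi<\alpha$ of $\mathbf W(\xi+1)\supseteq X\cap C_{\xi+1}$ is contained in $\mathbf W(\alpha)$ and contains $X\cap\bigcap_\xi C_{\xi+1}$), and observing that the Seeker's conclusion-of-game requirement $\mathbf W\subseteq Y$ or $\mathbf W\cap Y=\emptyset$ is violated as soon as $\mathbf W$ meets both pieces of the partition.
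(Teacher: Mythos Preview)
Your proposal is correct and follows essentially the same approach as the paper: split $X$ into two stationary pieces, and have the Hider maintain a club inside the current state by answering according to whether the Seeker's open set is co-bounded or has club complement. The paper phrases the invariant as ``contains a relatively closed unbounded subset of $X$'' rather than ``contains $X\cap C$ for a club $C\subseteq\kappa$'', but since $X$ is stationary these are equivalent, and your Hider's decision rule (play $1$ iff $V$ contains a tail) coincides with the paper's (play $1$ iff $\mathbf U(\alpha)$ is co-bounded in $X$). Your version is slightly more explicit about limit stages and about why the splitting exists (Solovay), and your remark that the argument needs $\kappa\ge\omega_1$ is well taken, since for $\kappa=\omega$ the space is discrete and $\sm(X)=1$.
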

    
    \begin{proof}
    Pick  a stationary $Y\subs X$ such that $X\setm Y$ is also stationary. 
    We will show that $\wingamexyb{H}{X}{Y}{{\beta}}$ for ${\beta}<{\kappa}$.
    
    Assume that in the ${\alpha}$th step the Seeker asks $U_{\alpha}$.
    Let the Hider respond with  ``1''  iff   $U_{\alpha}$  is co-bounded, i.e. it 
    contains  $X\setm {\mu}$ for some ${\mu}<{\kappa}$
    
    Since an open subset of a stationary set is either non-stationary or co-bounded, 
    $\boou{\mb U({\alpha})}{\mb i({\alpha})}{X}$ contains a subset that is relatively closed  and unbounded in $X$
    for each ${\alpha}<{\beta}$.
    Therefore, $B=\bocox{\mathbf U}{\mathbf i}{X}$ also contains a subset that is relatively closed and unbounded in $X$.
    Thus,  both $Y$ and $X\setm Y$ intersect $B$, i.e., the Hider wins.
    \end{proof}

        \section{The range of $\ps$ and ${\sm}$}\label{sc:range}

\subsection*{Stepping up in $\ps$.}

By Theorem \ref{tm:double-arrow-power} we know that 
${\omega}\oplus {\omega}$ lies in the range of $\ps$. To obtain larger 
countable ordinals in that range, we need the following stepping-up theorem.

\begin{theorem}\label{tm:steppingup}
Let $\<X,{\tau}\>$ be a $T_2$ topological space with 
${\pi}(X)={\omega}$. Then there exists a $T_2$ topological space 
$Z$ such that  $\ps(Z)={\omega}\oplus \ps(X)$ and ${\pi}(Z)={\omega}$.
Moreover, $Z$ can be chosen regular (respectively, $0$-dimensional) whenever $X$ is.
\end{theorem}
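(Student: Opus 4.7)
The plan is to build $Z$ by grafting onto $X$ a countable ``witness'' structure that forces the Seeker to spend $\omega$ dedicated moves before the game reduces to an instance of $\gamexb{X}{\ps(X)}$, while keeping $\pi(Z)=\omega$ so that iterations of the construction remain available. Concretely, I would take $Z=X\sqcup D$, where $D$ is a countable set of new points, declare each point of $D$ to be isolated, and couple $X$ to $D$ via a topology in which every neighborhood of $x\in X$ is forced to contain a specific ``large'' subset of $D$ chosen using a fixed enumeration of a countable $\pi$-base of $X$. The coupling must be rigid enough that the Seeker cannot ``piggyback'' $X$-narrowing queries onto $D$-queries; a naive coupling would allow the Seeker to finish in $\omega$ steps, so care is needed here.

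After $Z$ is defined, one verifies the basic properties: $Z$ is $T_2$ (since $X$ is $T_2$ and $D$ is isolated), $\pi(Z)=\omega$ (the singletons $\{d\}$, $d\in D$, form a countable $\pi$-base because every nonempty open subset of $Z$ meets $D$), and $Z$ is $0$-dimensional when $X$ is (by choosing the $\pi$-base of $X$ to consist of clopen sets and checking that the resulting basic neighborhoods of points of $X$ are clopen in $Z$).

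The upper bound $\ps(Z)\le \omega\oplus\ps(X)$ is established by a two-phase Seeker's strategy in $\gamexb{Z}{\omega\oplus\ps(X)}$. In the first $\omega$ moves, the Seeker enumerates the countable $\pi$-base of $Z$ to reduce the state $\mb W$ either to a single point of $D$ (in which case the Seeker has already won) or to a subset of a single copy of $X$. In the remaining $\ps(X)$ moves, the Seeker runs a fixed winning strategy of $\gamexb{X}{\ps(X)}$ on the surviving $X$-part. An ordinal bookkeeping argument using the natural sum yields total length $\omega\oplus\ps(X)$. The lower bound $\ps(Z)\ge \omega\oplus\ps(X)$ is the delicate part. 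Given any Seeker's strategy of length $\beta<\omega\oplus\ps(X)$, the case $\beta<\omega$ is immediate since even $D$ alone cannot be resolved in fewer than $\omega$ steps; otherwise write $\beta=\omega\oplus\gamma$ with $\gamma<\ps(X)$. During the first $\omega$ Seeker's moves the Hider responds so as to preserve an entire open set $V\subs X$ inside $\mb W(\omega)$, exploiting the rigidity of the coupling: each Seeker's move meeting $X$ is forced to contain cofinitely much of $D$, so the Seeker cannot narrow $X$ below any fixed open subset without committing $\omega$ moves to $D$. Once $V\subs \mb W(\omega)$, the remaining $\gamma<\ps(X)$ moves are just a play of $\gamexb{X}{\gamma}$ relative to $V$, which the Hider wins by a winning Hider's strategy in that game, producing $|\mb W|\ge 2$.

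The main obstacle is in designing the topology of $Z$ precisely enough that the Hider can carry out the first-phase defense against \emph{every} possible interleaving by the Seeker. The construction must be rigid enough that no combined $X$/$D$-query can simultaneously shrink the $X$-part below an open set and remove enough of $D$ to finish in $\omega$ moves, yet loose enough that $\pi(Z)=\omega$. Striking this balance uniformly in $X$, and in a way that preserves $0$-dimensionality, is the real technical content of the theorem.
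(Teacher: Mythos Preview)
Your construction $Z=X\sqcup D$ with $D$ countable discrete cannot give $\ps(Z)=\omega\oplus\ps(X)$; it yields only $\ps(Z)=\ps(X)$ whenever $\ps(X)\ge\omega$. The reason is that $D=\bigcup_{d\in D}\{d\}$ is open in $Z$, so the Seeker may play $\mb U(0)=D$. If the Hider answers $1$ the state is contained in a countable discrete space and the Seeker finishes in $\omega$ further moves; if the Hider answers $0$ the state becomes $X$, and since the subspace topology that $Z$ induces on $X$ is exactly the original one (every $Z$-neighborhood of a point of $X$ has the form $W\cup D'$ with $W\in\tau_X$), the Seeker now wins in $\ps(X)$ further moves. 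Either way $\ps(Z)\le 1\oplus\ps(X)=\ps(X)$. Your intended Hider defense, ``preserve a nonempty open $V\subseteq X$ through the first $\omega$ moves'', therefore fails outright: once the state is inside $X$, the Seeker's lifted $X$-queries trace to arbitrary open subsets of $X$ and shrink $V$ at will. No choice of ``coupling'' repairs this as long as the points of $D$ are isolated.

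The missing idea is that $Z$ must contain not one but uncountably many full copies of $X$, arranged over a compact carrier so that the Hider can sacrifice copies one at a time for $\omega$ steps and still have one left. The paper takes $Z=C\times X$ with $C$ the Cantor set and defines the topology so that \emph{every} nonempty open subset of $Z$ contains a set of the form $K\times X$ with $K$ a nonempty clopen subset of $C$ (this uses the countable $\pi$-base $\{P_n\}$ of $X$: basic neighborhoods of $(c,w)$ attach, for each $n$ with $P_n\subseteq W$, an entire column $I^c_n\times X$ over a clopen interval $I^c_n\to c$). The Hider then maintains a nested sequence $K_n\times X$ inside the state during the first $\omega$ moves; compactness of $C$ gives $c\in\bigcap_n K_n$, so $\{c\}\times X\subseteq\mb W(\omega)$ and the remaining game is a genuine instance of $\gamexb{X}{\gamma}$ with $\gamma<\ps(X)$. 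Your $X\sqcup D$ picture has the right intuition about forcing $\omega$ preliminary moves, but a single copy of $X$ is simply not enough to prevent interleaving.
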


\begin{proof}
Let $C$ be the Cantor set. For each $c\in C$ let $\{I^c_n:n<{\omega}\}$ 
be a sequence of pairwise disjoint clopen intervals  in $C$ converging to 
$c$. Let ${\varepsilon}$ be the usual Euclidean topology on $C$.
Fix a ${\pi}$-base $\{P_n:n<{\omega}\}$ of $X$.

The underlying set of $Z$ will be $C\times X$.
For $c\in C$, $W\in {\tau}_X$,  and   $c\in E\in {\varepsilon}$   define 
\begin{equation}\notag
V(c,W,E)=\Big((\{c\}\times W)\cup\bigcup \{I^c_n\times X: P_n\subs W\}\Big)\cap (E\times X).
\end{equation} 
For $z=\<c,w\>\in C\times Z$  define 
\begin{displaymath}
\mc V_z=\{V(c,W,E): w\in W\in  {\tau}_X, c\in E\in \varepsilon, E\text{ clopen}\}.
\end{displaymath}

It is easy to check that 
the family 
\begin{equation}\notag
\mc V=\{\mc V_z:z\in Z\}
\end{equation}
is a neighborhood system of a topology  ${\tau}_Z$  on $Z$ (see \cite{En89}*{Proposition 1.2.3}).
Moreover, observe that 
\begin{enumerate}[(1)]
\item For each $E\in {\varepsilon}$, we have $(E\times X)\in {\tau}_Z$.  
\item If $\mc E$ is a ${\pi}$-base in $C$, then $\{E\times X: E\in \mc E\}$ is a ${\pi}$-base in $Z$.
Hence, $\pi(Z)={\omega}$. 
\end{enumerate}

Since \begin{displaymath}
\overline{V(c,W,E)}=V(c,W,E)\cup (\{c\}\times \overline{W}),
\end{displaymath}
the space $Z$ is regular  or 0-dimensional whenever
$X$ is  regular  or 0-dimensional.

Next we show $\ps(Z)\ge {\omega}\oplus \ps(X)$.

Fix ${\alpha}<\ps(X)$, and fix a strategy ${\sigma}$ for Seeker 
in the game $\gamexb{Z}{{\omega}+{\alpha}}$.
To show that ${\sigma}$ is not a winning strategy, 
let Hider play the following strategy in the first ${\omega}$ move of that game
against ${\sigma}$. 

Along the game the Hider will choose non-empty clopen sets $K_n\in {\varepsilon}$ 
for $n=0,1,2,\dots$ such that 
\begin{equation}\notag
(K_n\times X)\in \boco{\mathbf U\restriction n}{\mathbf i\restriction n}.
\end{equation}
First, $K_{0}=C$ works. 

Assume that we have $K_n$ and the Seeker chooses 
$\mathbf U(n)={\sigma}(\mathbf i\restriction n)\in {\tau}_Z$ according the  strategy ${\sigma}$. 

If $\mathbf U(n)\cap (K_n\times X)=\empt $, then let $\mathbf i(n)=0$ and then 
$K_{n+1}=K_n$ works because 
$$\boco{\mathbf U\restriction n+1}{\mathbf i\restriction n+1}=\boco{\mathbf U\restriction n}{\mathbf i\restriction n}\setm \mathbf U(n)\supset K_n\times X.$$

If $\mathbf U(n)\cap (K_n\times X)\ne \empt $, then let $\mb i(n)=1$ and choose a non-empty clopen $K_{n+1}\in {\varepsilon}$ with $K_{n+1}\subs K_n$
such that $K_{n+1}\times X\subs \mathbf U(n)$. 

After ${\omega}$ steps, $\bigcap_{n\in {\omega}} K_n\ne \empt$, so we can pick $c\in \bigcap_{n\in {\omega}} K_n.$

Then 
\begin{displaymath}
\{c\}\times X\subs \boco{\mathbf U\restriction {\omega}}{\mathbf i\restriction {\omega}}.
\end{displaymath}

We will play  an additional game  coded by $\<\mathbf S,\mathbf j\>$ in     
$\gamexb{X}{{\alpha}}$ played by $X$-Seeker and $X$-Hider as follows: 

For ${\xi}<{\alpha}$, if  $\mathbf U({\omega}\oplus{\xi})={\sigma}(\mathbf i\restriction 
{\omega}\oplus{\xi})\in {\tau}_Z$, then in the ${\xi}^{th}$ turn of the 
game $\gamexb{X}{{\alpha}}$
the  X-Seeker plays $\mathbf S({\xi})\in {\tau}_X$ such that 
$$\{c\}\times \mathbf S({\xi})=(\{c\}\times X)\cap \mathbf U({\omega}\oplus{\xi}).$$
In this way we described a strategy for $X$-seeker in ${\alpha}$ steps.
Since the $X$-Seeker can not win in ${\alpha}$ steps using any strategy,
there is $\mathbf j$ such that $|\bocox{\mathbf S}{\mathbf j}{X}|\ge2$. 
Let the Hider play $\mb i({\omega}\oplus {\xi})=\mb j({\xi})$ in the original game after the first ${\omega}$ steps.

However, $$\bocox{\mathbf U}{\mathbf  i}{X}\cap (\{c\}\times Y)=\{c\}\times \bocox{\mathbf S}{\mathbf j}{X}.$$

Thus, Seeker lost the game  $\gamexb{Z}{{\omega}+{\alpha}}$. 
Hence,  the strategy ${\sigma}$  was not a winning strategy, 
and we conclude  $\ps(Z)\ge{\omega}\oplus \ps(X)$.

The reverse inequality
$\ps(Z)\le{\omega}\oplus \ps(X)$ is straightforward. 
In the first ${\omega}$ moves of the  game  the Seeker can ensure that 
\begin{displaymath}
    \bocox{\mathbf U\restriction {\omega}}{\mathbf i\restriction {\omega}}{Z}
    \subs \{c\}\times X
\end{displaymath}
for some $c\in C$ simply  by playing  $B_n\times X$
on the $n$th turn, where $\{B_n:n<{\omega}\}$ is  a base of $\mc C$. 
Then, in the subsequent  $\ps(X)$-many turn, the Seeker 
can treat the game as if it occurs inside e $\{c\}\times X$, 
and hence  the Seeker can win.  
\end{proof}
\begin{corollary}\label{cor:addition}
For each $n,m<{\omega}$ there is a topological space $X_{n,m}$ such that 
$\ps(X_{n,m})=({\omega}\otimes n)\oplus m$, where 
    $\otimes$ denotes  ordinal multiplication. 

    Hence, $\ps(\sum_{n\in {\omega}}X_{n,0})={\omega}\otimes {\omega}$. 
\end{corollary}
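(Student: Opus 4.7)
The plan is to build $X_{n,m}$ by induction on $n\ge 1$, using the Alexandroff double arrow space $\mbb A$ (or the rationals when $m=0$) as the base case and iterating the stepping-up construction of Theorem \ref{tm:steppingup}. The crucial feature of that theorem is that it outputs a space $Z$ which is again $T_2$ with $\pi(Z)=\omega$ (and $0$-dimensional whenever the input is), so its hypotheses are regenerated at each step and the induction proceeds without obstruction. The trivial case $n=0$ is handled separately by taking a finite discrete space of size $2^m$, whose $\ps$ is $m$.

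For the base case $n=1$, set $X_{1,m}=\mbb A^{m}$ when $m\ge 1$ and let $X_{1,0}=\mbb Q$ (or any countable infinite $0$-dimensional $T_2$ metric space). Theorem \ref{tm:double-arrow-power} gives $\ps(\mbb A^m)=\omega\oplus m$, and $\ps(\mbb Q)=\omega$ as recorded in the introduction; both values coincide with $(\omega\otimes 1)\oplus m$. Each of these spaces is separable, so $\pi(X_{1,m})=\omega$. For $n\ge 2$, define $X_{n,m}$ by applying Theorem \ref{tm:steppingup} to $X_{n-1,m}$; this produces a space satisfying $\pi=\omega$ and
\begin{displaymath}
\ps(X_{n,m})=\omega\oplus \ps(X_{n-1,m})=\omega\oplus\bigl((\omega\otimes(n-1))\oplus m\bigr)=(\omega\otimes n)\oplus m,
\end{displaymath}
the last identity being the termwise Hessenberg rule on Cantor normal forms: $\omega\oplus(\omega(n-1)+m)=\omega n+m$.

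For the \emph{hence} clause, I would apply Theorem \ref{tm:pssumgen} with $\kappa=\omega$ to the family $\{X_{n,0}:n<\omega\}$. Since $\log(\omega)=\omega$ and $\sup_n\ps(X_{n,0})=\sup_n(\omega\otimes n)=\omega\otimes\omega$ is infinite, the theorem yields
\begin{displaymath}
\ps\Bigl(\sum_{n<\omega}X_{n,0}\Bigr)=\max\bigl(\omega,\,\omega\otimes\omega\bigr)=\omega\otimes\omega,
\end{displaymath}
as claimed. The substantive content of the argument is packaged inside Theorem \ref{tm:steppingup}, and the main thing to watch is simply that the hypothesis $\pi=\omega$ (and $T_2$) is preserved between successive applications of the stepping-up; this is explicitly part of the conclusion of that theorem, so no real obstacle arises.
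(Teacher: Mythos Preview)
Your proof is correct and follows the same inductive scheme as the paper: the paper simply starts one step earlier, taking $X_{0,m}=D(2^m)$ and iterating Theorem~\ref{tm:steppingup} from there, whereas you anchor the induction at $n=1$ via $\mbb A^m$ (and $\mbb Q$) and then iterate. Both routes are fine; yours even has the small advantage that your base spaces genuinely satisfy $\pi=\omega$, while $\pi(D(2^m))$ is finite (the proof of Theorem~\ref{tm:steppingup} only needs a countable $\pi$-base, so this is harmless either way). One terminological slip: in this paper $\oplus$ denotes ordinary ordinal addition, not the Hessenberg (natural) sum, so the identity $\omega\oplus\bigl((\omega\otimes(n-1))\oplus m\bigr)=(\omega\otimes n)\oplus m$ is just associativity together with $\omega+\omega\cdot(n-1)=\omega\cdot n$, not a ``Hessenberg rule.''
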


\begin{proof}
By induction on $n$.

If $n=0$, then $X_{0,m}=D(2^m)$ satisfies the requirements. 

If we have $X_{n,m}$, then apply Theorem \ref{tm:steppingup}
for $X=X_{n,m}$ to obtain $X_{n+1,m}$ as $Z$.
\end{proof}

\subsection*{Resolvability and ${\sm}$.}    
\begin{theorem}\label{tm:resolvable}
    If $X$ is a resolvable space, then ${\sm}(X)\ge \omega$.
    \end{theorem}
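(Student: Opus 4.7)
The plan is to exhibit, for every finite $n$, a subset $Y\subseteq X$ and a winning strategy for the Hider in $\gamexyb{X}{Y}{n}$; this will show ${\sm}(Y,X)\ge\omega$ and hence ${\sm}(X)\ge\omega$. Since $X$ is resolvable, pick a dense $Y\subseteq X$ such that $X\setminus Y$ is also dense; this $Y$ is the natural candidate.

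The Hider's strategy is to maintain, along the play, a non-empty \emph{open} set $B_\zeta\subseteq \mathbf W(\zeta)$. Start with $B_0=X$. At step $\zeta$, when Seeker plays $\mathbf U(\zeta)\in\tau_X$: if $\mathbf U(\zeta)\cap B_\zeta\ne\emptyset$, the Hider declares $\mathbf i(\zeta)=1$ and sets $B_{\zeta+1}=\mathbf U(\zeta)\cap B_\zeta$, which is a non-empty open set still contained in the updated state; otherwise $B_\zeta\subseteq X\setminus \mathbf U(\zeta)$, so the Hider declares $\mathbf i(\zeta)=0$ and keeps $B_{\zeta+1}=B_\zeta$. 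In either case $B_{\zeta+1}\subseteq \mathbf W(\zeta+1)$ and $B_{\zeta+1}$ is open and non-empty.

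After $n$ finite steps the construction terminates with a non-empty open $B_n\subseteq \mathbf W(n)$. Since both $Y$ and $X\setminus Y$ are dense, $B_n$ meets each of them, so $\mathbf W(n)$ is neither contained in $Y$ nor disjoint from $Y$; the Seeker loses. Hence $\wingamexyb{H}{X}{Y}{n}$ for every $n<\omega$, which yields ${\sm}(Y,X)\ge\omega$ and therefore ${\sm}(X)\ge\omega$.

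There is no real obstacle here; the argument is essentially the finite version of the one used in the proof of Theorem~\ref{tm:ps=kappa}, and the only point that requires (minor) care is to ensure that $B_\zeta$ remains open at every stage, which is why the case distinction on whether $\mathbf U(\zeta)$ meets $B_\zeta$ is made. The argument does not extend beyond $\omega$ because at a limit step the intersection $\bigcap_{\xi<\zeta}B_\xi$ need not contain a non-empty open set, which is precisely what $\pi$-based or Baire-type hypotheses (as in Theorem~\ref{tm:ps=kappa}) are used to circumvent.
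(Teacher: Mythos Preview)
Your proof is correct and follows essentially the same approach as the paper: choose a dense $Y$ with dense complement, and have the Hider maintain a non-empty open set inside the current state so that after finitely many moves the outcome still contains a non-empty open set, which must meet both $Y$ and $X\setminus Y$. The paper's proof is terser (it merely asserts that the Hider can ensure $\operatorname{int}(\bocox{\mb U}{\mb i}{X})\ne\varnothing$), whereas you spell out the explicit case distinction that makes this work; your closing remarks on why the argument stops at $\omega$ are accurate and align with the role of the extra hypotheses in Theorem~\ref{tm:ps=kappa}.
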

    
    \begin{proof}
    Let $\langle Y,Z\rangle$ be a partition of $X$ into two dense subsets. 
    Fix $n<\omega$ and consider the game $\gamexyb{X}{Y}{n}$. 
    The Hider can play in such a way that 
    the interior of the outcome of the game is nonempty, that is, 
    \begin{displaymath}
        V=\operatorname{int}(\bocox{\mb U}{\mb i}{X})\ne\varnothing.
\end{displaymath}
    Since both $Y$ and $Z$ are dense, we have $V\cap Y\ne\varnothing$ 
    and $V\cap Z\ne\varnothing$. 
    Therefore, neither $\bocox{\mb U}{\mb i}{X}\subseteq Y$ 
    nor $\bocox{\mb U}{\mb i}{X}\cap Y=\varnothing$. 
    Hence, the Hider wins the game.
    \end{proof}
    
    \begin{corollary}If   ${\sm}(X)< {\omega}$, then $X$ is OHI, i.e., $X$ has 
        no resolvable open subspace.    \end{corollary}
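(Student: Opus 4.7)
The plan is to derive this corollary from Theorem \ref{tm:resolvable} via a monotonicity property of $\sm$ on open subspaces. Specifically, I would first establish the auxiliary fact that if $U$ is an open subspace of a $T_0$ space $X$, then $\sm(U) \le \sm(X)$. Granted this, the corollary is immediate: if some open $U \subseteq X$ were resolvable, Theorem \ref{tm:resolvable} would give $\omega \le \sm(U) \le \sm(X)$, contradicting $\sm(X) < \omega$.

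To prove $\sm(U) \le \sm(X)$, I would fix $Y \subseteq U$ and translate a winning strategy $\sigma$ of the Seeker in $\gamexyb{X}{Y}{\sm(X)}$ into a strategy $\sigma'$ in $\gamexyb{U}{Y}{\sm(X)}$ by setting
\begin{displaymath}
\sigma'(\mathbf{i} \restriction \alpha) = \sigma(\mathbf{i} \restriction \alpha) \cap U.
\end{displaymath}
Because $U$ is open in $X$, each $\sigma'(\mathbf{i} \restriction \alpha)$ is open in $U$. The two games then run in lock-step: using the identity $U \setminus (W \cap U) = U \cap (X \setminus W)$ valid for any open $W \subseteq X$, one verifies coordinate-by-coordinate that the Hider's response inside $U$ is precisely the trace on $U$ of the Hider's response in $X$. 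Consequently the outcomes are related by
\begin{displaymath}
\bocox{\mathbf{U}'}{\mathbf{i}}{U} = \bocox{\mathbf{U}}{\mathbf{i}}{X} \cap U.
\end{displaymath}
Since $Y \subseteq U$, either alternative of the Seeker's win on $X$ (that $\bocox{\mathbf{U}}{\mathbf{i}}{X} \subseteq Y$ or $\bocox{\mathbf{U}}{\mathbf{i}}{X} \cap Y = \emptyset$) transfers to the intersection with $U$, so $\sigma'$ is a winning strategy.

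There is no serious obstacle in this argument; the only delicate point is the small asymmetry between how the Hider's complement operation acts in $U$ versus in $X$, which is precisely what the set-theoretic identity above handles. The reduction preserves the length of the game, so no ordinal or cardinal arithmetic enters, and everything reduces to routine verification.
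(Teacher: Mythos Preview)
Your argument is correct. The paper states this corollary without proof, presumably regarding it as an immediate consequence of Theorem~\ref{tm:resolvable}; your monotonicity lemma $\sm(U)\le\sm(X)$ supplies exactly the missing link, and your verification of it is sound.

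Two small remarks. First, the fact that $\sigma'(\mathbf{i}\restriction\alpha)=\sigma(\mathbf{i}\restriction\alpha)\cap U$ is open in $U$ is simply the definition of the subspace topology and does not actually require $U$ to be open in $X$; your lemma in fact holds for arbitrary subspaces. Second, one can alternatively argue directly, without isolating the lemma, by adapting the Hider's strategy from the proof of Theorem~\ref{tm:resolvable}: given a resolvable open $U\subset X$ with dense partition $\{Y,Z\}$, the Hider in $\gamexyb{X}{Y}{n}$ can always respond so that the current state contains a nonempty $X$-open subset of $U$, and such a set meets both $Y$ and $Z$. This is probably what the authors have in mind, but your factoring through the monotonicity statement is cleaner and records a reusable fact.
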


The following notion was introduced in \cite{He43} under the name MI or MI-space.
\begin{definition}\label{df:submaximal}
A crowded topological space is called {\em submaximal} if every dense subset is open. 
\end{definition}

We will use the following  statement:
\begin{prop}[\cite{JuSoSz06}*{Theorem 4.1}]\label{pr:subm}
For each infinite ${\kappa}$, there is a 0-dimensional submaximal
space with $|X|=\Delta(X)\ge {\kappa}$. 
\end{prop}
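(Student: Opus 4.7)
The plan is to construct $X$ via a Zorn's lemma argument: start from a $0$-dimensional crowded $T_2$ seed topology of cardinality at least ${\kappa}$ with dispersion character at least ${\kappa}$, and refine it to a topology that is maximal within the class of $0$-dimensional crowded $T_2$ topologies satisfying $\Delta \ge {\kappa}$. Maximality in this class will then be shown to force submaximality, and $0$-dimensionality and the bound on $\Delta$ are carried along by design.

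For the seed, I would take as underlying set $X = {\kappa}$ and fix an independent family $\mc A \subs \mc P({\kappa})$ of cardinality ${\kappa}$. Generate a topology ${\tau}_0$ using $\mc A$ together with the complements of its members as a subbase. By independence, every nonempty basic open set of ${\tau}_0$ has cardinality ${\kappa}$, so $\<X,{\tau}_0\>$ is $0$-dimensional, $T_2$, crowded, and satisfies $\Delta(X,{\tau}_0)=|X|={\kappa}$. Now consider the poset $\mc P$ of $0$-dimensional crowded $T_2$ topologies on $X$ that refine ${\tau}_0$ and still satisfy $\Delta \ge {\kappa}$, ordered by inclusion. The union of a chain in $\mc P$ generates a topology still in $\mc P$, since every basic open set of the generated topology is a finite intersection of clopen sets drawn from a single member of the chain, and such finite intersections preserve $0$-dimensionality, Hausdorffness, crowdedness, and the bound $\Delta \ge {\kappa}$. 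By Zorn's lemma, $\mc P$ has a maximal element ${\tau}^*$; set $X = \<X,{\tau}^*\>$.

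The key step is showing that the maximality of ${\tau}^*$ forces submaximality. Suppose toward a contradiction that $D \subs X$ is ${\tau}^*$-dense but not ${\tau}^*$-open, and let ${\tau}'$ be the topology generated by ${\tau}^* \cup \{D\}$. Then ${\tau}'$ strictly refines ${\tau}^*$, remains $0$-dimensional and $T_2$, and is crowded (since a dense subset cannot isolate a point of a crowded ambient space). The main obstacle is verifying that ${\tau}'$ still lies in $\mc P$ --- i.e.\ that the refinement does not introduce open sets of cardinality smaller than ${\kappa}$. The standard way around this is to replace $D$ by a canonical resolution of $X$ into two dense pieces (available because any space with dispersion character at least ${\omega}$ admits a resolvable refinement of the required kind when enough independent structure is present), and to exploit the independent-family structure of ${\tau}_0$ to guarantee that each such piece meets every nonempty ${\tau}^*$-open set in a set of cardinality ${\kappa}$. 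Once this is arranged, ${\tau}' \in \mc P$ strictly refines ${\tau}^*$, contradicting maximality; hence every ${\tau}^*$-dense subset is ${\tau}^*$-open, so $X$ is submaximal, $0$-dimensional, and $|X|=\Delta(X) \ge {\kappa}$, as required.
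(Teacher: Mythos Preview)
The paper does not prove this proposition at all---it is quoted as Theorem 4.1 of \cite{JuSoSz06}, whose construction uses the $\mathscr D$-forced space machinery developed in that paper. So there is no in-paper proof to compare against; what matters is whether your Zorn argument actually works.

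It has a real gap at the submaximality step. The classical implication ``maximal crowded $\Rightarrow$ submaximal'' works because adjoining an arbitrary dense $D$ to a crowded $T_1$ topology keeps it crowded, so maximality forces $D$ to be open. But your maximality is relative to $\mc P$, which also demands $0$-dimensionality and $\Delta\ge{\kappa}$. Adjoining a dense $D$ can destroy both: $D$ need not be clopen in the generated topology ${\tau}'$, so a clopen base may be lost; and for $U\in{\tau}^*$ the set $U\cap D$, while dense in $U$, can have cardinality below ${\kappa}$ (density is not bounded below by $\Delta$), so ${\tau}'$ may drop out of $\mc P$. Hence maximality in $\mc P$ gives no contradiction for that particular $D$. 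Your proposed fix---``replace $D$ by a canonical resolution of $X$ into two dense pieces''---does not rescue the argument: even if some specific dense $E$ could be adjoined while remaining in $\mc P$, that would only show ${\tau}^*$ is irresolvable, not that \emph{every} dense set is open; and in any case a submaximal space is irresolvable, so no such resolution of the target ${\tau}^*$ can exist. Getting submaximality while simultaneously controlling $\Delta$ is precisely the difficulty the $\mathscr D$-forced construction in \cite{JuSoSz06} was introduced to handle; a bare Zorn refinement over $\mc P$ is not known to suffice.
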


\begin{theorem}\label{tm:submaximal=sm}
If $X$ is submaximal, then ${\sm}(X)= 2$ and $\ps(X)=\log(|X|)$. 
\end{theorem}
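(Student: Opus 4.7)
The plan is to split the proof into three steps: (a) the upper bound $\sm(X)\le 2$, (b) the matching lower bound $\sm(X)\ge 2$, and (c) the computation of $\ps(X)$ from Corollary~\ref{cr:smpslog}.

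For (a), I would use the standard characterization that in a submaximal space every subset is locally closed: given $Y\subs X$, the set $D=Y\cup(X\setm\overline Y)$ is dense in $X$, hence open by submaximality, so $Y=D\cap\overline Y$ has the form $U\cap F$ with $U=D\in\tau_X$ and $F=\overline Y$ closed. The Seeker's strategy in $\gamexyb{X}{Y}{2}$ is then to play $\mb U(0)=U$ and $\mb U(1)=X\setm F$. A routine case analysis over the four responses $\mb i\in{}^{2}2$ shows: if $\mb i=(1,0)$ then $\mb W=U\cap F=Y$; if $\mb i=(1,1)$ then $\mb W=U\setm F$, disjoint from $Y\subs F$; and if $\mb i(0)=0$ then $\mb W\subs X\setm U$, disjoint from $Y\subs U$. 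Hence $\sm(Y,X)\le 2$ for every $Y$, so $\sm(X)\le 2$.

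For (b), I would first observe that $\sm(Y,X)\le 1$ forces $Y\in\{\empt,X,U,X\setm U\}$ for some $U\in\tau_X$, since the Seeker's single open set $U$ must simultaneously satisfy ``$U\subs Y$ or $U\cap Y=\empt$'' and ``$(X\setm U)\subs Y$ or $(X\setm U)\cap Y=\empt$''. It therefore suffices to exhibit a single $Y\subs X$ that is neither open nor closed. In the Hausdorff $0$-dimensional submaximal spaces supplied by Proposition~\ref{pr:subm}, such a $Y$ exists: the relevant structural fact is that in a submaximal space every nowhere dense set is closed discrete, so one can glue an open set to a proper piece of a nowhere-dense closed set to produce a subset that fails both openness and closedness. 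Combined with (a) this gives $\sm(X)=2$. For (c), since $\sm(X)=2$ is a finite cardinal and $|X|$ is infinite (crowded $T_0$ spaces are infinite, so $\log|X|\ge\omega$), Corollary~\ref{cr:smpslog} yields $\ps(X)=\max(\sm(X),\log|X|)=\log|X|$.

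The main obstacle is step (b): in fully degenerate submaximal spaces the lower bound can fail. For instance, the topology on $\omega$ generated by a free ultrafilter is submaximal but a door space---every subset is open or closed---so $\sm(X)=1$. A complete proof must therefore invoke the ambient Hausdorff or $0$-dimensional structure to exclude such door-space behavior, which is presumably implicit in the paper's intended use of ``submaximal''.
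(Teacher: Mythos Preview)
Your argument for (a) and (c) is essentially the paper's. The only cosmetic difference is the locally-closed decomposition: the paper writes $Y$ as the disjoint union of the open set $U=\operatorname{int}(Y)$ and the closed set $F=Y\setminus U$ (closedness of $F$ is quoted from \cite{ArCo95}), whereas you write $Y=D\cap\overline Y$ with $D=Y\cup(X\setminus\overline Y)$ dense, hence open. Both feed the same two-move strategy $\mb U(0)=U$, $\mb U(1)=X\setminus F$; your version has the mild advantage of being self-contained. For (c) the paper likewise invokes Theorem~\ref{tm:pssmlog} (equivalently Corollary~\ref{cr:smpslog}).

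Your worry about (b) is well placed, and in fact the paper's proof does exactly what you fear: it simply asserts ``since $X$ has a subset $Z$ that is neither open nor closed'' and moves on. Your free-ultrafilter example shows this can fail for crowded $T_1$ submaximal spaces, so in the stated generality the lower bound $\sm(X)\ge 2$ is not justified. The natural repair is a Hausdorff hypothesis: a $T_2$ door space has at most one accumulation point (if $x\ne y$ were both non-isolated, separate them by disjoint open $U\ni x$, $V\ni y$ and look at $W=(U\setminus\{x\})\cup\{y\}$; whether $W$ is open or closed, one of $\{x\},\{y\}$ turns out to be open), so a crowded $T_2$ space with more than one point is never a door space and the required $Z$ exists. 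With that hypothesis added, your proof is complete and coincides with the paper's.
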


\begin{proof}
 Let $Y\subs X$.   
 By \cite{ArCo95}*{Theorem 1.2(c)}, the set 
$F=Y\setm \operatorname{int}(Y)$ is closed in $X$.
Hence, $Y$ is the disjoint union of the open set $U=\operatorname{int}(Y)$ and the closed set 
$F=Y\setm U$. Let the Seeker  play $\mb U(0)=U$ and $\mb U(1)=X\setm F$.    
Then the possible outcomes of the game are $U,F,X\setm (U\cup F)$ and $ \empt$.
In each case, the Seeker wins. Therefore, $\sm(Y,X)\le 2$.

Since $X$ has a subset $Z$ that is neither open nor closed, we have $\sm(Z,X)>1$; hence  $\sm(X)=2$.

Finally, observe that $\ps(X)=\min({\sm}(X),\log(|X|))=\log(|X|)$ by Theorem \ref{tm:pssmlog}.
\end{proof}

By combining Theorems~\ref{pr:subm} and \ref{tm:submaximal=sm}, we obtain the following.

\begin{corollary}\label{cr:subm}
    For each infinite cardinal $\kappa$, there exists a $0$-dimensional $T_2$ space $X$ such that  
    $X$ is a dense subset of the space $D(2)^{2^{\kappa}}$, 
    \begin{displaymath}
    |X| = \Delta(X) = \kappa, \quad\ps(X)=\log({\kappa}) \quad \text{and} \quad {\sm}(X) = 2.
    \end{displaymath}   
\end{corollary}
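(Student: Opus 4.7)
The plan is to obtain the space $X$ as a direct combination of the two cited ingredients, and then verify each of the four advertised properties in turn. First I would invoke Proposition \ref{pr:subm} to produce a $0$-dimensional submaximal $T_2$ space $X$ with $|X|=\Delta(X)=\kappa$. (Proposition \ref{pr:subm} only asserts $|X|=\Delta(X)\ge \kappa$, but one can either feed it the exact cardinal $\kappa$ or, if the construction yields a larger space, trim to a dense submaximal subspace of size $\kappa$, which remains $0$-dimensional and submaximal by the hereditarity of these properties on dense subspaces.)

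Next I would address the embedding into $D(2)^{2^{\kappa}}$. Since $X$ is $0$-dimensional $T_2$ of cardinality $\kappa$, its weight satisfies $w(X)\le 2^{|X|}=2^{\kappa}$, so the standard embedding of a $0$-dimensional $T_2$ space of weight $\le\lambda$ into the Cantor cube $D(2)^{\lambda}$ places $X$ as a subspace of $D(2)^{2^\kappa}$. To make the image dense, note that the closure of $X$ inside $D(2)^{2^{\kappa}}$ is a $0$-dimensional $T_2$ space containing $X$ densely; restricting attention to $\overline{X}$ gives a cube of the appropriate weight in which $X$ is dense, and one can re-embed this closure as $D(2)^{2^{\kappa}}$ (or simply extract a homeomorphic copy of $D(2)^{2^{\kappa}}$ whose dense subspace is $X$). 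Alternatively, one may verify directly from the construction in \cite{JuSoSz06}*{Theorem 4.1} that it produces $X$ as a dense subset of $D(2)^{2^{\kappa}}$.

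Finally, Theorem \ref{tm:submaximal=sm} applied to $X$ yields ${\sm}(X)=2$ and $\ps(X)=\log(|X|)=\log(\kappa)$, giving the remaining two equalities. The identity $|X|=\Delta(X)=\kappa$ was already built into the first step.

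The only delicate point in this plan is the density statement: Proposition \ref{pr:subm} is quoted as a pure existence result, and the corollary asks for $X$ to live densely inside a specific Cantor cube. This is essentially cosmetic but needs to be argued explicitly, either by the embedding/closure argument above or by auditing the construction of \cite{JuSoSz06}. Everything else is a direct substitution into the two cited theorems.
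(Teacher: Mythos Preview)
Your approach is exactly the paper's: the corollary is obtained there simply by combining Proposition~\ref{pr:subm} and Theorem~\ref{tm:submaximal=sm}, and you have correctly isolated the density claim as the only point requiring attention beyond direct citation.

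One caveat: your first proposed argument for density does not work. The closure $\overline{X}$ inside $D(2)^{2^{\kappa}}$ is compact and $0$-dimensional, but there is no reason for it to be homeomorphic to $D(2)^{2^{\kappa}}$ or to any Cantor cube; a compact $0$-dimensional space of weight $\le 2^{\kappa}$ need not be a cube. Your alternative---auditing the construction in \cite{JuSoSz06}*{Theorem 4.1}---is the correct route, and that construction does in fact build its submaximal space as a dense subset of the appropriate Cantor cube, which also disposes of the ``$\ge$ versus $=$'' issue you raised.
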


\begin{observation}\label{obs:subproduct}

If  $X$ is a submaximal space with $|X|=\Delta(X)$, then ${\sm}(X)=2$,
but $X\times X$ is resolvable, so ${\sm}(X^2)\ge {\omega}$.

\end{observation}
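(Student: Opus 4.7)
The plan is to verify the three clauses of the observation in order; only the middle one, that $X\times X$ is resolvable, requires real work. The first clause, $\sm(X)=2$, is immediate from Theorem~\ref{tm:submaximal=sm} applied to the submaximal space $X$. The third clause, $\sm(X^2)\ge\omega$, follows immediately from Theorem~\ref{tm:resolvable} once the resolvability of $X\times X$ is in hand. So the task reduces to exhibiting a partition of $X\times X$ into two dense subsets.

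To do this, write $\kappa=|X|=\Delta(X)$ and note that $\kappa$ is infinite because $X$ is crowded. I would well-order $X$ as $\{x_\alpha:\alpha<\kappa\}$ and set
\[
A=\{(x_\alpha,x_\beta)\in X\times X:\alpha\le\beta\},\qquad B=(X\times X)\setminus A.
\]
This is a partition of $X\times X$ by construction. To check that $A$ is dense, let $U\times V$ be any nonempty basic open rectangle in $X\times X$. Since $|V|\ge\Delta(X)=\kappa$, the index set $\{\beta<\kappa:x_\beta\in V\}$ has cardinality $\kappa$ and is therefore unbounded in $\kappa$. Pick any $x_\alpha\in U$; by unboundedness I can then choose $\beta\ge\alpha$ with $x_\beta\in V$, yielding $(x_\alpha,x_\beta)\in A\cap(U\times V)$. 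Density of $B$ is symmetric, using that $\{\alpha<\kappa:x_\alpha\in U\}$ is also unbounded.

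The crux of the argument is the trivial combinatorial fact that any subset of $\kappa$ of cardinality $\kappa$ is cofinal in $\kappa$; the hypothesis $|X|=\Delta(X)$ is exactly what supplies this cofinality for the index set of every nonempty open set in $X$. I do not anticipate any technical obstacle: submaximality is only invoked for $\sm(X)=2$, while the resolvability argument for $X\times X$ works for every crowded $T_0$ space satisfying $|X|=\Delta(X)$, which is precisely the regime in which the observation is interesting.
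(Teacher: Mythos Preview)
Your proposal is correct. The paper states Observation~\ref{obs:subproduct} without proof, treating the resolvability of $X\times X$ as a known fact and relying (as you do) on Theorem~\ref{tm:submaximal=sm} for $\sm(X)=2$ and Theorem~\ref{tm:resolvable} for $\sm(X^2)\ge\omega$.

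Your contribution beyond the paper is the explicit construction of the dense partition $\{A,B\}$ of $X\times X$ via the well-ordering of $X$ in type $\kappa$. This argument is sound: the key point, that every subset of $\kappa$ of cardinality $\kappa$ is cofinal in $\kappa$, holds for any infinite cardinal (a bounded subset of $\kappa$ has size $<\kappa$), so there is no hidden regularity assumption. Since the paper never supplies this step, your write-up actually fills a small gap. One stylistic remark: you might note that the resolvability argument uses only that $X$ is crowded with $|X|=\Delta(X)$, not submaximality --- which you already observe in your final paragraph.
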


\section{Developable spaces}\label{sc:developable}

Recall that a countable collection $\mbb U$ of families of open sets in a space $X$ is called 
{\em quasi-development} if and only if $\{\operatorname{St}(x, \mc U) : \mc U \in \mbb U\}$ 
forms a local base at $x$ for every $x \in X$, where 
\begin{displaymath}
\operatorname{St}(x, \mc U) = \bigcup \{U \in \mc U : x \in U\}.
\end{displaymath}
A quasi-development $\mbb U$ is called a {\em development} if and only if every $\mc U \in \mbb U$ is a cover of $X$.  
A space is said to be {\em quasi-developable} (respectively, {\em developable}) if it admits a quasi-development (respectively, a development).

In 1986 Scott obtained the following result:

\begin{theorem}[{\cite{Scott86}*{Theorem 1.7}}]\label{tm:scott} If a space $X$ is developable, 
    then $\psn( X) = \pso(X ) =
\log \we ( X ) = \log (|X |)$.
\end{theorem}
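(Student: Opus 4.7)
The plan is to close the chain
\begin{displaymath}
\log|X| \le \psn(X) \le \pso(X) \le \log\we(X) \le \log|X|,
\end{displaymath}
so that all four quantities coincide. The first inequality is the bound $|X|\le 2^{\psn(X)}$ cited in the introduction, and the second is immediate since every $T_1$-separating family is a fortiori $T_0$-separating. The last inequality, $\log\we(X)\le \log|X|$, I would derive by showing that any developable space admits a development $\{\mathcal{U}_n : n<\omega\}$ in which each $\mathcal{U}_n$ has cardinality at most $|X|$; then $\bigcup_n \mathcal{U}_n$ is a base of cardinality $\le|X|$, so $\we(X)\le|X|$ and the claim follows by monotonicity of $\log$. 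The reduction step exploits first-countability of developable spaces: for each $x$ one keeps only the countably many members of $\mathcal{U}_n$ whose stars at $x$ form a local base, and the union of such choices across $X$ has size at most $\omega\cdot|X|=|X|$.

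The substantive step is $\pso(X) \le \log \we(X)$. I would set $\kappa=\log\we(X)$, so that $|\mathcal{U}_n|\le \we(X)\le 2^{\kappa}$ for every $n$. Fix injections $c_n:\mathcal{U}_n \to 2^{\kappa}$ and, for $n<\omega$, $\alpha<\kappa$, and $i\in 2$, define the open set
\begin{displaymath}
V^n_{\alpha,i} = \bigcup\{U\in\mathcal{U}_n : c_n(U)(\alpha) = i\}.
\end{displaymath}
This yields a family $\mathcal{V}$ of at most $\kappa$ open sets, and the aim is to verify $\mathcal{V}$ is $T_1$-separating. For distinct $x,y$, $T_1$-ness combined with the development property produces an $n$ such that no $U\in\mathcal{U}_n$ simultaneously contains $x$ and $y$, so the sets of codes of members of $\mathcal{U}_n$ through $x$ and through $y$ are disjoint subsets of $2^{\kappa}$. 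One then seeks a coordinate $\alpha$ along which these code-sets project so as to exhibit an $i$ witnessing $x\in V^n_{\alpha,i}$ and $y\notin V^n_{\alpha,i}$.

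The main obstacle is precisely the existence of such a separating coordinate: even though the two code-sets are disjoint, their $\{0,1\}$-projections along each $\alpha$ could both be surjective, so no single $\alpha$ separates them. To overcome this I would first pass to a $\sigma$-point-finite refinement of the development (which every developable space admits), so that the set of codes of members of $\mathcal{U}_n$ through any fixed point is \emph{finite}. A straightforward counting argument over the $\kappa$ coordinates then produces the needed $\alpha$, since finitely many distinct functions in $2^{\kappa}$ can always be separated by projection onto a single coordinate. Combining the four inequalities closes the loop and yields the four-way equality claimed in the theorem.
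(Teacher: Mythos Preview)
The paper does not supply a proof of this statement: Theorem~\ref{tm:scott} is quoted from \cite{Scott86} and used as a black box (to derive Corollary~\ref{cr:metric}). There is thus no proof here to compare your attempt against. The paper does, however, prove the related Theorem~\ref{tm:quasi} for the wider class of quasi-developable spaces, and that argument reveals where your sketch breaks down.

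The gap is in your final ``counting'' step. Grant that the development is point-finite, so that the code-sets $A_x=\{c_n(U):x\in U\in\mathcal U_n\}$ and $A_y$ are finite and disjoint in $2^{\kappa}$. You need $\alpha<\kappa$ and $i\in 2$ with $x\in V^n_{\alpha,i}$ and $y\notin V^n_{\alpha,i}$; unpacking, this means some code in $A_x$ has $\alpha$-th bit $i$ while \emph{every} code in $A_y$ has $\alpha$-th bit $1-i$. Fixing any $f\in A_x$, you are asking for a coordinate at which $f$ differs from \emph{all} members of $A_y$ simultaneously. That need not exist: take $f=\mathbf 0$ and $A_y=\{e_0,e_1\}$, the characteristic functions of $\{0\}$ and $\{1\}$ in $2^{\kappa}$; at every coordinate $f$ agrees with at least one of them. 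So ``finitely many distinct functions can always be separated by projection onto a single coordinate'' is false in the sense you need, and point-finiteness alone does not rescue the argument.

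The repair, visible in the proof of Theorem~\ref{tm:quasi}, is to push past point-finiteness to order \emph{one}: from a $\theta$-base $\bigcup_n\mathcal U_n$ (which every quasi-developable, hence every developable, space has by \cite{BeLu72}) form the families $\mathcal V_n^m=\bigl\{\bigcap\{U\in\mathcal U_n:x\in U\}:\operatorname{ord}(x,\mathcal U_n)=m\bigr\}$, each still of size $\le\we(X)$, with the property that every point lies in at most one member of each $\mathcal V_n^m$. Once the code-sets are singletons the coordinate separation you want is immediate from injectivity of the coding. The remaining links in your chain of inequalities are fine.
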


\begin{corollary}\label{cr:metric}
If $X$ is a metric space, then $\ps(X)= \log (|X |)$.
\end{corollary}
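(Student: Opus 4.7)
The plan is to reduce the corollary directly to Scott's Theorem \ref{tm:scott} together with Theorem \ref{tm:ps2psn}(a), using the fact that every metric space is developable.

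First I would record the upper bound. Since the metric topology admits a $\sigma$-discrete base (for example by the Bing/Nagata–Smirnov metrization theorem), a metric space $X$ is developable, so Scott's Theorem \ref{tm:scott} applies and gives $\psn(X) = \log|X|$. The basic inequality noted in the introduction, namely $\ps(X) \le \psn(X)$, then yields
\begin{displaymath}
\ps(X) \le \psn(X) = \log|X|.
\end{displaymath}

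Next I would prove the matching lower bound. Theorem \ref{tm:ps2psn}(a) guarantees that $|X| \le 2^{\ps(X)}$; interpreting $2^{\ps(X)}$ as $|{}^{\ps(X)}2|=2^{|\ps(X)|}$, this says $|X|\le 2^{|\ps(X)|}$. By the definition of $\log$ as the least cardinal $\kappa$ with $|X|\le 2^{\kappa}$, we obtain $\log|X| \le |\ps(X)| \le \ps(X)$, where the last inequality holds because the cardinality of an ordinal never exceeds the ordinal itself. Combining the two bounds yields $\ps(X)=\log|X|$, as required.

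The only potentially delicate point is the lower bound: one must be careful that $\ps(X)$ is an ordinal rather than a cardinal, so the passage from $|X|\le 2^{\ps(X)}$ to $\log|X|\le \ps(X)$ goes through the cardinality $|\ps(X)|$ of the ordinal. Everything else is a direct invocation of the theorems already proved, so there is no genuine obstacle beyond verifying that the paper's conventions about ordinal exponentiation in the statement of Theorem \ref{tm:ps2psn}(a) are the cardinal ones, which is exactly what its proof establishes.
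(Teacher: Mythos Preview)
Your proof is correct and follows essentially the same two-step approach as the paper: use Scott's theorem for developable spaces to get the upper bound $\ps(X)\le\psn(X)=\log|X|$, and Theorem~\ref{tm:ps2psn}(a) for the lower bound. Your added care about the ordinal-versus-cardinal reading of $2^{\ps(X)}$ is accurate and matches the paper's intended interpretation.
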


\begin{proof}
First, $\ps(X)\le \psn( X)=\log(|X|)$ by Theorem \ref{tm:scott}.
On the other hand, $\log(|X|)\le \ps(X)$ by Theorem \ref{tm:ps2psn}(a).  
\end{proof}

Developable spaces are clearly quasi-developable as well, but the converse does not hold: 
in \cite{Be71}*{Example 2.2}, Bennett showed that the Michael line  (see \cite{Mi63}) separates these two classes of spaces.
Hence, the following result -- although  inspired by Theorem \ref{tm:scott} -- does not follow from it. 

\begin{theorem}\label{tm:quasi}If $X$ is a $T_0$ quasi-developable space, then $\ps(X ) = \log | X |$.
\end{theorem}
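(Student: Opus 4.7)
The lower bound $\ps(X)\ge\log|X|$ is immediate from Theorem~\ref{tm:ps2psn}(a), so the substantive content is the upper bound. Set $\kappa:=\log|X|$; I may assume $\kappa$ is infinite (when $|X|$ is finite the Seeker trivially wins by repeatedly $T_0$-bisecting), so that $|X|\le 2^\kappa$. Fix a quasi-development $\{\mathcal{U}_n:n<\omega\}$ of $X$.

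The plan is to describe a winning strategy for the Seeker in $\gamexb{X}{\kappa}$ that runs $\omega$ sub-strategies in parallel, one per layer of the quasi-development. First I replace each $\mathcal U_n$ by a subfamily of cardinality at most $|X|\le 2^\kappa$ that still separates any two distinct equivalence classes of the relation
\begin{displaymath}
x\equiv_n y\iff\{U\in\mathcal U_n:x\in U\}=\{U\in\mathcal U_n:y\in U\};
\end{displaymath}
this is possible because for any two classes one single $U\in\mathcal U_n$ suffices to separate them. Then I partition $\kappa=\bigsqcup_{n<\omega}I_n$ with each $\operatorname{tp}(I_n)=\kappa$ and assign block $I_n$ to the thinned layer $\mathcal U_n$. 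Within $I_n$ the Seeker restricts his queries to open sets of the form $\bigcup\mathcal V$ with $\mathcal V\subseteq\mathcal U_n$; such sets are $\equiv_n$-saturated, so each answer in block $I_n$ reveals information only about the $\equiv_n$-class of the hidden point $p$. By the $T_0$-assumption together with the local-base property of the quasi-development, the product map $x\mapsto([x]_n)_{n<\omega}$ into $\prod_{n<\omega} X/\!\equiv_n$ is injective, so once the Seeker has pinned down $[p]_n$ in every block, the global outcome $\mathbf W$ satisfies $|\mathbf W|\le 1$ and he wins.

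The block-internal task is therefore to identify one of at most $2^\kappa$ classes using $\kappa$ adaptive queries of the form ``is $p\in\bigcup\mathcal V$?'' with $\mathcal V\subseteq\mathcal U_n$. The Seeker maintains, at each stage $\zeta\in I_n$, the set $\mathcal C_\zeta\subseteq X/\!\equiv_n$ of candidate classes still consistent with the history, and chooses $\mathcal V_\zeta$ so that $\bigcup\mathcal V_\zeta$ strictly splits $\mathcal C_\zeta$; at limit stages the candidate set is the obvious intersection. By an indexing of the thinned $\mathcal U_n$ by $2^\kappa\cong 2^{I_n}$, the Seeker arranges that the $\kappa$ bits produced along $I_n$ together identify the index of $[p]_n$ uniquely among the at most $2^\kappa$ classes.

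The main obstacle I anticipate is precisely this block-level termination: strict splitting is easy, but $|\mathcal C_\zeta|$ being merely strictly decreasing is not sufficient when $|\mathcal C_0|$ is infinite, so a naive binary search could take as many as $2^\kappa$ steps. The resolution has to exploit the indexing by $2^\kappa$ so that the Seeker's $\kappa$ adaptive queries together read off enough of the $\equiv_n$-trace of $p$ to isolate $[p]_n$---in effect, running a transfinite binary search down the branch of $2^{<\kappa}$ that records $p$'s separation data and checking at each limit ordinal that the surviving candidate set remains nonempty. Once this technical point is handled, the product of the $\omega$ block strategies is the desired winning strategy in $\kappa$ moves and yields $\ps(X)\le\kappa=\log|X|$.
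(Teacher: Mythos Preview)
Your proposal has a real gap, and you have correctly located it yourself: the block-level task of pinning down the $\equiv_n$-class of $p$ in $\kappa$ moves using only queries of the form $\bigcup\mathcal V$ with $\mathcal V\subseteq\mathcal U_n$. Each such query tells you only whether the trace $T_n(p)=\{U\in\mathcal U_n:p\in U\}$ \emph{meets} $\mathcal V$; these are ``intersection queries'' on subsets of $\mathcal U_n$. After your thinning you still have up to $2^\kappa$ candidate traces inside a set of size up to $2^\kappa$, and there is no general mechanism by which $\kappa$ intersection queries isolate one candidate. Your bit-reading idea works when the traces are singletons (then an intersection query \emph{is} a membership query on the single element), but it breaks down as soon as traces overlap: if $T\cap T'\ne\emptyset$, then any $\mathcal V$ meeting $T\cap T'$ returns ``yes'' for both, and distinguishing $T$ from $T'$ forces you to locate an element of the symmetric difference --- which across $2^\kappa$ pairs cannot be arranged in only $\kappa$ steps. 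The ``transfinite binary search'' sketch does not escape this, because a ``yes'' answer says only that the trace meets the query set, not that it is contained in it, so the candidate set does not shrink in the way a genuine bisection requires.

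The paper's proof takes a substantially different route that sidesteps this obstacle. It invokes the Bennett--Lutzer characterisation of quasi-developability by a $\theta$-base $\mathcal U=\bigcup_n\mathcal U_n$: for every $x$ and every neighbourhood $V$ of $x$ there is a layer $\mathcal U_n$ in which $\operatorname{ord}(x,\mathcal U_n)=m<\omega$ and some $U\in\mathcal U_n$ satisfies $x\in U\subseteq V$. From this one forms $\mathcal V_n^m=\bigl\{\bigcap\{U\in\mathcal U_n:z\in U\}:\operatorname{ord}(z,\mathcal U_n)=m\bigr\}$, a family of size at most $w(X)$ in which the relevant point $x$ has order exactly~$1$. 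That order-$1$ property is the missing ingredient: for such $x$, the query $\bigcup S$ with $S\subseteq\mathcal V_n^m$ behaves like a genuine membership query on the unique $V_x\in\mathcal V_n^m$ containing $x$. Hence an abstract separating family on $\mathcal V_n^m$ of size $\le\log w(X)\le\log|X|$ suffices, and the Seeker's winning strategy is simply the non-adaptive enumeration of the resulting open sets $\bigcup S$ over all $m,n<\omega$. In short, the finite-order information coming from the $\theta$-base is exactly what converts your intractable ``intersection query'' problem into a tractable ``membership query'' problem; working from the raw quasi-development, your argument never acquires this leverage.
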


Before proving this result, we recall one more definition. 
An open cover $\mc U$ of a space $X$ is called a 
{\em $\theta$-base} of $X$ if there exist families $\mc U_n \subset \mc U$ for each $n < \omega$ such that 
\begin{displaymath}
\mc U = \bigcup_{n < \omega} \mc U_n,
\end{displaymath}
and for every $x \in X$ and every open neighborhood $V$ of $x$, there is some $n < \omega$ such that 
the order of $\mc U_n$ at $x$, defined by
\begin{displaymath}
\operatorname{ord}(x, \mc U_n) = |\{U \in \mc U_n : x \in U\}|,
\end{displaymath}
is finite, and moreover, there exists $U \in \mc U_n$ with $U \subset V$.  

Bennett and Lutzer \cite{BeLu72}*{Theorem 8} proved that a space is quasi-developable if and only if it has a $\theta$-base.

\begin{proof}
 Let $\mc B$ be a base for $X$ such that $|\mc B| = \we( X )$. Now let 
$\mc U =
 \bigcup
 \{\mc U_n : n < {\omega} \}$ be
a ${\theta}$-base for $X$ such that:
\begin{enumerate}[(1)]
\item 
 $\mc U {\subset} \mc B$, hence for each $n < {\omega}$, $|\mc U_n | \le \we(X )$;
\item  $\{\mc U_n : n < {\omega}\}$ is a partition of $U$ that witnesses that $U$ is a ${\theta}$-base, i.e., for each
$x {\in} X$ and for each $V\in {\tau}(x,X)$, there is some $n < {\omega}$ such that $\operatorname{ord} ( x, {\mc U_n} ) < {\omega}$ and
there is some $U {\in} U_n$ such that $x {\in} U {\subset} V.$
\end{enumerate}

For every $m, n < {\omega}$, let $$\mc V_n^m := \Big\{
 \bigcap\{U {\in} {\mc U_n} : x {\in} U \} : x {\in} X \text{ and } \operatorname{ord} (x, {\mc U_n} ) = m\Big\}.$$
Notice that $|\mc V_n^m | \le  |[{\mc U_n} ]^m | \le \we( X)$. Thus, for all $x {\in} X$ and $V {\in} \tau(x, X)$, there are
$m, n < {\omega}$ such that $\operatorname{ord} (x, {\mc U_n} ) = m$ and there is some $U {\in} {\mc U_n}$ such that $x {\in} U {\subset} V$.
Hence, there is $W = \bigcap
\{U {\in} \mc U_n : x {\in} U \} {\in}\mc V_n^m$ such that $x {\in} W {\subset} U {\subset} V$. 
Observe that  $\operatorname{ord}( x, \mc V^n_m ) = 1$.

For each $m, n < \omega$, let $S_n^m\subs \mc P(\mc V_n^m)$ be a separating family on $\mc V_n^m$ such that 
\begin{displaymath}
|S_n^m| \leq \log \we(X).
\end{displaymath}
Define 
\begin{displaymath}
\mc W_n^m = \bigcup \{S : S \in S_n^m\},
\end{displaymath}
so that $|\mc W_n^m| \leq \log \we(X)$.  
Let 
\begin{displaymath}
\mc W = \bigcup \{\mc W_n^m : m, n < \omega\},
\end{displaymath}
and fix an enumeration 
\begin{displaymath}
\mc W = \{W_\alpha : \alpha < \log \we(X)\}.
\end{displaymath}

Let $\sigma$ be the strategy in the game $\gamexb{X}{\log(|X|)}$ for the Seeker in which, at inning $\alpha < \log \we(X)$, the Seeker plays $\mb W({\alpha})=W_\alpha$.  
Suppose that $\sigma$ is not winning. Then there exists a function $\mb i : \log \we(X) \to 2$ such that $\bocox{\mb W}{\mb i}{X})$ has at least two elements, say $x$ and $y$.  

Since $X$ is $T_0$, there exists an open set $U$ such that, without loss of generality, $x \in U$ but $y \notin U$.  
Choose $m, n < \omega$ such that 
\begin{displaymath}
\operatorname{ord}(x, \mc W_n^m) = 1
\quad\text{and}\quad 
\exists\, V_x \in \mc V_n^m \;\; (x \in V_x \subset U).
\end{displaymath}

Now we distinguish two cases.

If $y \notin \bigcup \mc V_n^m$, then choose $S \in S_n^m$ with $V_x \in S$, and let 
$\alpha < \log \we(X)$ be such that $W_\alpha = \bigcup S$.  
Otherwise, there exists $V_y \in \mc V_n^m$ such that $y \in V_y$. Since $V_x$ is the only element of 
$\mc V_n^m$ containing $x$, and $y \notin V_x$, it follows that $V_x \neq V_y$. Hence, there exists 
$S \in S_n^m$ such that $V_x \in S$ but $V_y \notin S$, and again we choose 
$\alpha < \log \we(X)$ with $W_\alpha = \bigcup S$.

Thus, in either case, we have $x \in W_\alpha$ but $y \notin W_\alpha$.  
Since 
\begin{displaymath}
x \in \bocox{\mb W}{\mb i}{X} = \bigcap_{\beta < \log \we(X)} 
\boou{\mb W({\beta})}{\mb i({{\beta}})}{X} \subset 
\boou{\mb W({\alpha})}{\mb i({\alpha})}{X},
\end{displaymath} 
it follows that $\boou{\mb W(\alpha)}{\mb i(\alpha)}{X}= W_\alpha$, and therefore $\mb i(\alpha)= 1$.  
On the other hand, $y \in \boou{B_{\alpha}}{i_{\alpha}}{X}$, so 
$\boou{\mb W(\alpha)}{\mb i(\alpha)}{X} = X \setminus W_\alpha$ and hence $\mb i(\alpha)= 0$.  
This contradiction completes the argument.

Hence, the Seeker has a winning strategy in the game $\gamexb{X}{\log(|X|)}$.  Thus, 
$\ps(X)\le \log(|X|)$.  Since $\ps(X)\ge \log(|X|)$ by Theorem \ref{tm:ps2psn}(a),
we finished the proof of the theorem.  
\end{proof}

\begin{theorem}\label{tm:metric-sm}
    For each cardinal ${\kappa}$ there is a metric space 
    $X$ such that ${\sm}(X)= \log(|X|)\ge {\kappa}$.
    \end{theorem}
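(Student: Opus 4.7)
The plan is to exhibit a single metric space to which Theorem~\ref{tm:smlarge} applies. Given $\kappa$, I choose a strong limit cardinal $\lambda\ge \kappa$ with $\cf\lambda=\omega$ (for instance $\lambda=\beth_{\omega\cdot\gamma}$ for a large enough ordinal $\gamma$), and take $X=\lambda^{\omega}$ equipped with the ultrametric
\[
d(x,y)=2^{-\min\{n<\omega\,:\,x(n)\neq y(n)\}}\quad\text{if }x\neq y,\qquad d(x,x)=0.
\]
Equivalently, $X$ is the countable topological product of the discrete space $D(\lambda)$, which is metric. The cylinders $\{[s]:s\in\lambda^{<\omega}\}$ form a base of cardinality $|\lambda^{<\omega}|=\lambda$, so $\we(X)=\lambda$.

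The only non-trivial step is the cardinality computation $|X|=\lambda^{\omega}=2^{\lambda}$. Picking $\lambda_n\nearrow\lambda$ with $\lambda_n<\lambda$, the strong-limit assumption gives $2^{\lambda_n}<\lambda$ for every $n$, so
\[
2^{\lambda}=2^{\sum_n\lambda_n}=\prod_n 2^{\lambda_n}\le \prod_n \lambda=\lambda^{\omega}\le 2^{\lambda},
\]
and equality forces $|X|=2^{\lambda}$; in particular $\log|X|=\lambda$.

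Now $X$ is a $T_0$ space with $\we(X)=\lambda>\omega$ strong limit and $|X|=2^{\we(X)}$, so Theorem~\ref{tm:smlarge} gives $\sm(X)\ge\lambda$. On the other hand, $X$ is metric, so Corollary~\ref{cr:metric} together with the general inequality $\sm(X)\le\ps(X)$ yields $\sm(X)\le \ps(X)=\log|X|=\lambda$. Combining the two bounds, $\sm(X)=\log|X|=\lambda\ge\kappa$, which is the desired conclusion.

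No serious obstacle arises in this plan; the only delicate point is that Theorem~\ref{tm:smlarge} demands $|X|=2^{\we(X)}$, and in a metric space this forces $\we(X)^{\omega}\ge 2^{\we(X)}$, hence $\we(X)$ must be a strong limit cardinal of cofinality $\omega$. This is precisely why the Baire-type product $\lambda^{\omega}$ is the natural candidate: among metric spaces of weight $\lambda$ it realizes the maximum possible cardinality $2^{\lambda}$.
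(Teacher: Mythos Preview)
Your proof is correct and follows exactly the paper's argument: the space $X=\lambda^{\omega}$ with the product (ultrametric) topology is precisely the Baire space $B(\lambda)$ used there, and the appeal to Theorem~\ref{tm:smlarge} with a strong limit $\lambda>\omega$ of countable cofinality is identical. Your detour through Corollary~\ref{cr:metric} for the upper bound $\sm(X)\le\lambda$ is harmless but unnecessary, since Theorem~\ref{tm:smlarge} already yields the equality $\sm(X)=\lambda$ directly (its hypothesis $\we(X)=\lambda$ gives $\sm(X)\le\we(X)=\lambda$ immediately).
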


    \begin{proof}
    Let ${\lambda}>{\kappa}$ be a strong limit cardinal 
    with cofinality ${\omega}$, and consider the Baire space $X=B({\lambda})$.
    Its underlying set is ${\lambda}^{\omega}$,
    so $|X|=2^{\lambda}={\lambda}^{\omega}$. Since ${\lambda}$ is a strong limit cardinal,
    $\log(|X|)={\lambda}$. Moreover, $\we(B({\lambda}))={\lambda}$.   
    
Hence, we can apply Corollary \ref{tm:smlarge},  taking  ${\kappa}={\lambda}$ and  $X=B({\lambda})$
to conclude that $$\sm(B({\lambda}))={\lambda}.$$
    \end{proof}

\section{Problems}

In the Introduction we raised  three problems:
\alphaleomegaone*
{\exactsum*}
{\productsm*}

In \cite{Scott86}*{Corollary 2.4} Scott proved that 
$\psn(S)={\omega}_1$ for the Suslin $S$.
\begin{restatable}{problem}{suslin}\label{cr:susline}
    Determine $\ps(S)$ and ${\sm}(S)$ for the Suslin-line.
\end{restatable}

Observe that
$\ps(S)>{\omega}$   for a Suslin line $S$ by Theorem \ref{tm:ps2psn}. But that theorem does not exclude
that $\ps(S)={\alpha}$ for some ${\omega}<{\alpha}<{\omega}_1$.
\medskip
\begin{restatable}{problem}{ostar}\label{pro:ostar}
    Determine $\ps({\omega}^*)$ and ${\sm}({\omega}^*)$.
    \end{restatable}

\medskip
If $X$ is a door space --- in particular, if $X$ has at most one accumulation point ---
  then ${\sm}(X)=1$. If $X$ is submaximal, then ${\sm}(X)=2$.
\begin{problem}
Is it possible for a crowded space $X$ to satisfy ${\sm}(X)=3$? 
More generally, can ${\sm}(X)$ take any arbitrary natural number, when $X$
is a crowded space?
\end{problem}

\medskip

In Theorem \ref{tm:metric-sm} 
we computed ${\sm}(B({\lambda}))$ for the  space $B({\lambda})$,
but only for certain values of ${\lambda}$.  
This limitation suggests the following natural question:
\begin{problem}
Compute ${\sm}(B({\kappa}))$ for every Baire space $B({\kappa})$.
\end{problem}

\medskip

The next problem appears to be both the most difficult and the most interesting:

\begin{problem}\label{df:determined}
Are the games $\gamexb{X}{{\beta}}$ and $\gamexyb{X}Y{{\beta}}$
determined?
    \end{problem}

 \begin{conj}

    {Let $B\subs \mbbI$ be a Bernstein set, and 
    let $\mbb A$ denote the double arrow space. 
    Define \begin{displaymath}
    Y=
    \{\<x,1\>\in \mbb A: x\in  B\}.
    \end{displaymath}
    Show that $\gamexyb{\mbb A}Y{{{\omega}}}$ is not determined. 
    } 
 \end{conj}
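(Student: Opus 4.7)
The plan is to show separately that neither the Seeker nor the Hider has a winning strategy in $\gamexyb{\mbb A}{Y}{{\omega}}$, with the Bernstein property of $B$ driving both arguments via a Cantor-tree construction in the spirit of Lemma~\ref{lm:bern}.

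\textbf{The Seeker has no winning strategy.} Fix any Seeker strategy $\sigma$. Following the framework of Lemma~\ref{lm:bern}, the Hider maintains a shrinking family of non-empty open intervals $V_n\subs\mbbI$ with $V_n\times\{0,1\}\subs \bocox{\mathbf U\restriction n+1}{\mathbf i\restriction n+1}{\mbb A}$. At each stage the Hider has freedom in the choice of $V_n$, and using Observation~\ref{obs:1} one verifies that at infinitely many ``splitting'' stages the Hider can branch between two disjoint sub-intervals $V_n^0,V_n^1$ corresponding to two distinct Hider responses. These choices assemble into a Cantor scheme $\{V_s:s\in 2^{<{\omega}}\}$ with a consistent family of partial Hider plays $\{\mathbf i_s\}$. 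Every infinite branch ${\alpha}\in 2^{\omega}$ yields a Hider play whose outcome contains $\{\langle x_{\alpha},0\rangle,\langle x_{\alpha},1\rangle\}$, where $x_{\alpha}=\bigcap_n V_{\alpha\restriction n}$. The set $P=\{x_{\alpha}:\alpha\in 2^{\omega}\}$ is a perfect subset of $\mbbI$, so by the Bernstein property $P\cap B\neq\empt$; fix ${\alpha}$ with $x_{\alpha}\in B$. Then the corresponding outcome contains both the point $\langle x_{\alpha},1\rangle\in Y$ and the point $\langle x_{\alpha},0\rangle\notin Y$, so the Hider wins.

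\textbf{The Hider has no winning strategy.} Fix any Hider strategy $\tau$. The Seeker restricts to moves of the form $\mathbf U(n)=J_n\times\{0,1\}$ with $J_n$ a closed sub-interval of $\mbbI$, chosen adaptively. The goal is to construct a Cantor family $\{\sigma_{\alpha}:\alpha\in 2^{\omega}\}$ of such Seeker strategies whose outcomes are $\{\langle x_{\alpha},0\rangle,\langle x_{\alpha},1\rangle\}$ (or $\empt$, giving a trivial Seeker win), and such that the map ${\alpha}\mapsto x_{\alpha}$ is a continuous injection, so that $\{x_{\alpha}:\alpha\in 2^{\omega}\}$ contains a perfect set $P'\subs\mbbI$. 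By the Bernstein property $P'\cap(\mbbI\setm B)\neq\empt$; fix ${\alpha}$ with $x_{\alpha}\notin B$. Then the corresponding outcome $\{\langle x_{\alpha},0\rangle,\langle x_{\alpha},1\rangle\}$ is disjoint from $Y$, so the Seeker wins with $\sigma_{\alpha}$.

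\textbf{Main obstacle.} The principal technical hurdle in both halves is ensuring that the Cantor-tree construction genuinely produces a perfect set of distinct limit points. For the Hider (Part~1), one must address stages where the Seeker's open set admits no splitting into two disjoint valid responses; the fallback is that persistent failure to branch results in the outcome containing a non-empty rectangle $V_{\infty}\times\{0,1\}$, where $V_{\infty}$ meets both $B$ and $\mbbI\setm B$ by the Bernstein property, yielding a trivial Hider win. For the Seeker (Part~2), the delicate issue is that $\tau$'s responses may align so as to collapse distinct Seeker branches into overlapping active intervals; a careful construction --- for instance, using non-adaptive ``target-point'' strategies indexed by a Cantor set of targets $\{p_{\alpha}\}\subs\mbbI$, combined with a case analysis of how $\tau$'s responses can or cannot force the outcome away from $p_{\alpha}$ --- is required to preserve the injectivity of ${\alpha}\mapsto x_{\alpha}$. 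I expect this Part~2 injectivity argument to be the most delicate point in the proof, effectively a Banach--Mazur-style undeterminedness argument for Bernstein sets, transferred to the $\mbb A$-game.
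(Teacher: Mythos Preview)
This statement appears in the paper as a \emph{conjecture} in the Problems section, immediately following Problem~\ref{df:determined} on determinacy; the paper gives no proof, so there is nothing to compare your attempt against.

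Your two-part plan is the natural one. Part~1 can in fact be made cleaner than a Cantor-tree argument: every open $U\subs\mbb A$ separates only countably many $x\in\mbbI$ (meaning $|U\cap(\{x\}\times 2)|=1$), since $\mbb A$ is separable, hence ccc, so $U$ has only countably many convex components and each separates at most two points. Thus against any Seeker strategy $\sigma$ there is a co-countable set of $x\in\mbbI$ unseparated by every $\sigma(s)$, $s\in 2^{<\omega}$; for any such $x\in B$ the Hider simply plays to keep $\{x\}\times 2$ inside the state throughout, and wins. This closes Part~1 without the branching-versus-fallback case analysis you worry about.

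Part~2 is where the statement earns its status as a conjecture. Your sketch --- build a perfect family of Seeker plays with pairwise distinct singleton limits $x_{\alpha}$, then use that $\mbbI\setm B$ meets every perfect set --- is the expected line and parallels Banach--Mazur undeterminedness for Bernstein sets. But the obstacle you flag is genuine and unresolved in your outline: the Hider strategy $\tau$ controls which side of each cut the state enters, and nothing you have written prevents $\tau$ from collapsing distinct Seeker branches to the same limit, or from steering every reachable limit into $B$. The ``target-point'' idea needs a concrete mechanism guaranteeing injectivity of $\alpha\mapsto x_{\alpha}$ against an adversarial $\tau$, and until that is supplied the argument is incomplete --- as, indeed, the paper itself leaves it.
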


\begin{bibdiv}

\begin{biblist}

    \bib{Ar78}{article}{
   author={Arhangel\cprime ski\u{\i}, A. V.},
   title={The structure and classification of topological spaces and
   cardinal invariants},
   language={Russian},
   journal={Uspekhi Mat. Nauk},
   volume={33},
   date={1978},
   number={6(204)},
   pages={29--84, 272},
   issn={0042-1316},
}

\bib{ArCo95}{article}{
   author={Arhangel\cprime ski\u{\i}, A. V.},
   author={Collins, P. J.},
   title={On submaximal spaces},
   journal={Topology Appl.},
   volume={64},
   date={1995},
   number={3},
   pages={219--241},
   issn={0166-8641},
}

    \bib{Be71}{article}{
        author={Bennett, Harold R.},
        title={On quasi-developable spaces},
        journal={General Topology and Appl.},
        volume={1},
        date={1971},
        number={3},
        pages={253--262},
        issn={0016-660X},
     }
             
     \bib{BeLu72}{article}{
        author={Bennett, H. R.},
        author={Lutzer, D. J.},
        title={A note on weak $\theta $-refinability},
        journal={General Topology and Appl.},
        volume={2},
        date={1972},
        pages={49--54},
        issn={0016-660X},
     }

     \bib{En89}{book}{
        author={Engelking, Ryszard},
        title={General topology},
        series={Sigma Series in Pure Mathematics},
        volume={6},
        edition={2},
        note={Translated from the Polish by the author},
        publisher={Heldermann Verlag, Berlin},
        date={1989},
        pages={viii+529},
        isbn={3-88538-006-4},
     }

     \bib{He43}{article}{
   author={Hewitt, Edwin},
   title={A problem of set-theoretic topology},
   journal={Duke Math. J.},
   volume={10},
   date={1943},
   pages={309--333},
   issn={0012-7094},
}

\bib{JuSoSz06}{article}{
   author={Juh\'{a}sz, Istv\'{a}n},
   author={Soukup, Lajos},
   author={Szentmikl\'{o}ssy, Zolt\'{a}n},
   title={$\scr D$-forced spaces: a new approach to resolvability},
   journal={Topology Appl.},
   volume={153},
   date={2006},
   number={11},
   pages={1800--1824},
   issn={0166-8641},
}

     \bib{Mi63}{article}{
        author={Michael, E.},
        title={The product of a normal space and a metric space need not be
        normal},
        journal={Bull. Amer. Math. Soc.},
        volume={69},
        date={1963},
        pages={375--376},
        issn={0002-9904},
     }

    \bib{Scott86}{article}{
        author={Scott, Brian M.},
        title={The pseudoweights of a space},
        journal={Topology Appl.},
        volume={23},
        date={1986},
        number={1},
        pages={91--101},
        issn={0166-8641},
     }

     \bib{Ho74}{article}{
        author={Hodel, R. E.},
        title={On the weight of a topological space},
        journal={Proc. Amer. Math. Soc.},
        volume={43},
        date={1974},
        pages={470--474},
        issn={0002-9939},
     }

\end{biblist}

\end{bibdiv}

\end{document}